\numberwithin{equation}{section}
\numberwithin{table}{section}
\numberwithin{figure}{section}
\def\cf{cf.~}
\newcommand{\hsp}[1]{{\hbox{\hspace{#1}}}}
\newcounter{letcnt1} % letter section
\newcounter{letcnt2} % letter subsection
\newcounter{talkcnt} % talk section
\def\b{\beta}  
\def\d{\delta}
\def\z{\zeta}
\def\s{\sigma}
\def\t{\tau}
\def\x{\xi}
\def\tAd{\mathrm{Ad}} \def\tad{\mathrm{ad}}
\def\tAut{\mathrm{Aut}}
\def\bC{\mathbb C} 
\def\fC{\mathfrak{C}}
\def\td{\mathrm{d}}
 \def\tdim{\mathrm{dim}}
\def\cE{\mathcal E}
\def\tEnd{\mathrm{End}}
\def\bfY{\mathbf{Y}}
  \def\bfG{\mathbf{G}}
\def\tGr{\mathrm{Gr}}
\def\fg{{\mathfrak{g}}}
\def\bi{\mathbf{i}}
  \def\cI{\mathcal I}
\def\tId{\mathrm{Id}} \def\tIm{\mathrm{Im}}
\def\tid{\mathrm{id}}
\def\cJ{\mathcal J}
\def\fk{\mathfrak{k}}
\def\fl{\mathfrak{l}}
\def\cM{\mathcal M}
 \def\cN{\mathcal N}
 \def\sfn{\mathsf{n}}
 \def\cO{\mathcal O}
\def\sO{\mathscr{O}}
\def\bP{\mathbb P}
\def\fp{\mathfrak{p}}
\def\bQ{\mathbb Q}
\def\bR{\mathbb R}
\def\fS{\mathfrak{S}}  
\def\fT{\mathfrak{T}}  
\def\fs{\mathfrak{s}}
\def\bs{\mathbf{s}} 
\def\tSL{\mathrm{SL}} 
\def\Lie{\mathrm{Lie}}
\def\tStab{\mathrm{Stab}}
 \def\tspan{\mathrm{span}}
\def\fu{\mathfrak{u}}
\def\bbV{\mathbb{V}}
\def\cX{\mathcal X}
\def\cY{\mathcal{Y}}
   \def\bZ{\mathbb Z}
\def\fz{\mathfrak{z}} 
\def\tand{\quad\hbox{and}\quad}
\def\bs{\backslash}
\def\smallb{{\hbox{\small{$\bullet$}}}}
\def\inj{\hookrightarrow}
\def\sur{\twoheadrightarrow}
\def\op{\oplus}
\def\ot{\otimes}
\def\wt{\widetilde}
\def\wh{\widehat}
\newenvironment{a.list}
  {\begin{enumerate}[label=\alph*.,itemsep=3pt,leftmargin=25pt,listparindent=20pt]}
  {\end{enumerate}}
\newenvironment{num.list}
  {
  \begin{enumerate}[itemsep=3pt,leftmargin=25pt,listparindent=20pt,label={\arabic*.}]
  }
  {\end{enumerate}}
\newenvironment{i_list}
  {\begin{enumerate}[label=(\roman*),itemsep=3pt,leftmargin=25pt,listparindent=20pt]}
  {\end{enumerate}}
\newenvironment{i_list_emph}
  {\begin{enumerate}[label=\emph{(\roman*)},itemsep=3pt,leftmargin=25pt,listparindent=20pt]}
  {\end{enumerate}}
\newtheorem{corollary}[equation]{Corollary}
\newtheorem{lemma}[equation]{Lemma}
\newtheorem{proposition}[equation]{Proposition}
\newtheorem{theorem}[equation]{Theorem}
\newtheorem*{theorem*}{Theorem}
\theoremstyle{definition}
\newtheorem*{boldQ*}{Question}
\newtheorem*{boldP*}{Problem}
\theoremstyle{definition}
\theoremstyle{remark}
\newtheorem*{assume*}{Assume}
\newtheorem*{answer*}{Answer}
\newtheorem*{claim*}{Claim}
\newtheorem{definition}[equation]{Definition}
\newtheorem*{definition*}{Definition}
\newtheorem{example}[equation]{Example}
\newtheorem*{example*}{Example}
\newtheorem*{hint*}{Hint}
\newtheorem*{notation*}{Notation}
\newtheorem{remark}[equation]{Remark}
\newtheorem*{remark*}{Remark}
\newtheorem*{remarks*}{Remarks}
\newtheorem*{fact*}{Fact}
\newtheorem*{emphQ*}{Question}
\newtheorem*{emphA*}{Answer}
\def\fB{\mathfrak{B}}
\def\fF{\mathfrak{F}}
\def\bfH{\mathbf{H}}
\def\fH{\mathfrak{H}}
\def\bfL{\mathbf{L}}
\def\fL{\mathfrak{L}}
\def\olM{\overline\cM}
\def\tProj{\mathrm{Proj}}
\def\bfP{\mathbf{P}}
\def\bfQ{\mathbf{Q}}
\def\olS{\overline{S}}
\def\bfU{\mathbf{U}}
\def\fU{\mathfrak{U}}
\def\bfZ{\mathbf{Z}}
\def\fZ{\mathfrak{Z}}
\begin{document}
% --------------------

\title[Completion of two-parameter period maps]{Completion of two-parameter period maps by nilpotent orbits}

\author[Deng]{Haohua Deng}
\email{haohua.deng@dartmouth.edu}

\author[Robles]{Colleen Robles}
\email{robles@math.duke.edu}

\address{Mathematics Department, Dartmouth College, 29 N. Main Street, 6188 Kemeny Hall, Hanover, NH 03755-3551} 

\address{Mathematics Department, Duke University, 120 Science Drive, Box 90320, Durham, NC 27708-0320} 

\thanks{This material is based upon work supported by the National Science Foundation under Grant No. DMS-2304981.}

\date{\today}

\begin{abstract}
We show that every two-parameter period map admits a Kato--Nakayama--Usui completion to a morphism of log manifolds, and the map onto the image is a morphism of compact algebraic spaces.  This result also applies to the case of mixed period maps and we use it to give a construction of generalized N\`eron models.
\end{abstract}

\keywords{variation of Hodge structure, Kato-Nakayama-Usui completion}

\subjclass{32G20} % period matrices, variation of Hodge structure, degenerations.

\maketitle
% --------------------
% --------------------
\section{Introduction}
% --------------------

% --------------------
\subsection{Main result} \label{S:mainresult}
% --------------------

Let $S$ be a smooth quasi-projective variety, and let $\bbV \to S$ be a polarized variation of integral Hodge structures ($\bZ$-PVHS) of weight $\sfn$.  Geometrically, (unpolarized) variations of Hodge structure arise as $R^\sfn f_*\bZ/(\mathrm{torsion})$, with $f: \cX \to S$ a smooth proper morphism; by restricting to the local sub-system $\bbV \subset R^\sfn f_*\bZ/(\mathrm{torsion})$ corresponding to the primitive cohomology, we obtain a \emph{polarized} variation of integral Hodge structures.

Fix a point $s_o \in S$, and let $V_\bZ = \bbV_{s_o}$ and $Q : V_\bZ \times V_\bZ \to \bZ$ be the associated lattice and polarization, respectively.  Let $\Gamma \leq \tAut(V_\bZ,Q)$ be the image of the monodromy representation $\pi_1(S,s_o) \to \tAut(V_\bZ,Q)$.  Passing to a finite \'etale cover of $S$, if necessary, we may assume that $\Gamma$ is neat.  Parallel transportation under the Gauss--Manin connection identifies a general fibre $\bbV_s$ with the fixed fibre $\bbV_{s_o} = V_\bZ$.  Under this identification the Hodge structure on $\bbV_s$ defines a Hodge structure $\Phi(s)$ on $V_\bZ$.  The identification $\bbV_s \simeq V_\bZ$ is well-defined up to the action of $\Gamma$.  This yields a period map 
\begin{equation}\label{E:Phi}
  \Phi : S \ \to \ \Gamma\bs D \,,
\end{equation} 
with $D$ a period domain parameterizing $Q$-polarized Hodge structures on $V_\bZ$.  This period map completely describes the $\bZ$-PVHS. 

Without loss of generality $\Phi$ is proper \cite{MR0282990}.  Then the image $\Phi(S)$ of the period map is quasi-projective \cite{MR4557401}.  It is a long-standing problem to construct an algebraic completion of $\Phi$ that encodes Hodge-theoretically meaningful data at infinity (specifically, data encoded by the nilpotent orbits asymptotically approximating $\Phi$) that goes back to conjectures and problems posed by Griffiths in 1970 \cite{MR0258824}.  The problem is well-understood, when $D$ is hermitian (\S\ref{S:completions}).  In general $D$ is not hermitian (for example, the period domain parameterizing the Hodge structures $H^n(X,\bZ)$ of smooth hypersurfaces $X \subset \bP^{n+1}$ of degree $d \ge n+2$ and dimension $n\ge 3$ is not hermitian), and in this case much less is known.  Our main result is an extension theorem for two-parameter period maps.

\begin{theorem} \label{T:main1}
Suppose that $\tdim\,S=2$.  There exists a smooth projective completion $\olS \supset S$, with simple normal crossing divisor $\partial S = \olS \bs S$, and a logarithmic manifold $\Gamma \bs D_\Sigma$ parameterizing $\Gamma$--conjugacy classes of nilpotent orbits on $D$ so that $\Gamma \bs D \inj \Gamma \bs D_\Sigma$ and the period map \eqref{E:Phi} extends to a morphism $\Phi_\Sigma : \olS \to \Gamma \bs D_\Sigma$ of logarithmic manifolds.  The image $\Phi_\Sigma(\olS)$ is a compact algebraic space. 
\end{theorem}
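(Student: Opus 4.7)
My plan is to construct the completion in four stages: (a) choose a suitable log-smooth base $(\olS, \partial S)$; (b) assemble a fan $\Sigma$ of nilpotent cones from the local monodromy data; (c) invoke the Kato--Usui--Nakayama (KNU) framework to obtain the log-manifold quotient $\Gamma \bs D_\Sigma$ together with the extension $\Phi_\Sigma$; (d) deduce algebraicity of the image by combining \cite{MR4557401} with properness of the extension.

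For (a)--(b): Hironaka's resolution produces a smooth projective compactification $\olS_0$ of $S$ with SNC boundary.  After a finite \'etale cover of $S$ (which preserves neatness of $\Gamma$), Borel's unipotent monodromy theorem ensures the monodromies around all boundary components are unipotent.  At each boundary point $p \in \partial S$, local coordinates $(t_1,t_2)$ yield commuting logarithms of monodromy $N_1, N_2$ generating a rational nilpotent cone $\sigma_p \subset \fg_\bR$, and the several-variables nilpotent orbit theorem of Cattani--Kaplan--Schmid supplies a limit filtration $F_p \in \check D$ such that $(\sigma_p, F_p)$ is a nilpotent orbit asymptotically approximating $\Phi$ at $p$.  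I set $\Sigma$ to be the $\Gamma$-closure of the faces of all such $\sigma_p$, possibly after further modifications of $\olS_0$ at the boundary designed to force the fan axioms.

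Given such a $\Sigma$, stage (c) amounts to verifying that $\Sigma$ is strongly $\Gamma$-compatible in the sense of Kato--Usui (equivalently, determines a weak fan in the sense of Kato--Nakayama--Usui), so that $\Gamma \bs D_\Sigma$ acquires the structure of a logarithmic manifold into which $\Gamma \bs D$ embeds.  The assignment $\Phi_\Sigma(p) := [(\sigma_p, F_p)]$ then patches with $\Phi$ into a morphism of log manifolds, owing to the quantitative asymptotic control provided by the nilpotent orbit theorem.  For (d), the open image $\Phi(S)$ is quasi-projective by \cite{MR4557401}; to get algebraicity of its closure in the compact Hausdorff space $\Phi_\Sigma(\olS)$, I would combine the definability-at-infinity / o-minimal GAGA perspective with an Artin-type algebraization of the boundary strata, using the formal local model furnished by the nilpotent orbits.

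The main obstacle is stage (c).  In general, $\Gamma$-translates of local cones may intersect non-facially, and no strongly $\Gamma$-compatible fan containing the prescribed monodromy cones need exist; this is precisely why the analogous problem in higher dimension remains open.  The two-dimensional hypothesis is essential in two respects: the cones $\sigma_p$ have rank at most $2$, which rigidly constrains their possible intersection patterns; and the boundary of $\olS_0$ is a union of curves meeting at points, so any residual pathology can be resolved by explicit blowups at finitely many problematic boundary points.  The technical heart of the argument is controlling $\gamma \sigma_p \cap \sigma_q$ for $\gamma \in \Gamma$ and $p,q \in \partial S$.  I expect to handle this via the rigidity of commuting pairs of polarized mixed Hodge structures supplied by the several-variables $\mathrm{SL}_2$-orbit theorem, together with standard finiteness properties of $\Gamma$ acting on the rational nilpotent cones in $\fg_\bR$.
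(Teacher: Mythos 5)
Your high-level architecture matches the paper's: compactify $S$ with SNC boundary, read off the monodromy cones, assemble a candidate fan, invoke Kato--Nakayama--Usui (Theorem~\ref{T:KNU}) to get the log-manifold $\Gamma\bs D_\Sigma$ and the extension $\Phi_\Sigma$, then deduce algebraicity of the image. You also correctly locate the crux: ensuring the candidate collection is a weak fan, which reduces to controlling intersections $\tAd_\gamma(\sigma_p)\cap\sigma_q$ for $\gamma\in\Gamma$, and exploiting the rank-$\le 2$ hypothesis plus blow-ups to fix pathologies. That is exactly where the paper lives too.

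However, there is a genuine gap at the heart of stage~(c). You write that you would "handle this via the rigidity of commuting pairs of polarized mixed Hodge structures supplied by the several-variables $\mathrm{SL}_2$-orbit theorem, together with standard finiteness properties of $\Gamma$ acting on the rational nilpotent cones." There is no such standard finiteness result that directly bounds the set of intersections $\{\sigma\cap\tAd_\gamma(\tau)\}$: the $\Gamma$-orbit of a rational nilpotent cone $\tau$ is infinite, and the $\mathrm{SL}_2$-orbit theorem by itself does not give a finiteness statement for which $\gamma\in\Gamma$ produce a nontrivial, nilpotent-orbit-compatible intersection with a fixed $\sigma$. The paper's entire technical content is a new theorem (Theorem~\ref{T:doublecoset}) asserting that the set of such $\gamma$ has \emph{finite image} in the double coset space $\bfZ_\sigma(\bZ)\bs\bfG(\bZ)/\bfZ_\tau(\bZ)$, and its proof is nontrivial: one must first normalize so $\sigma$ and $\tAd_\gamma(\tau)$ share a weight filtration, then rationalize the grading element $Y$ (Lemma~\ref{L:rat}), then build compatible fundamental sets for $\bfZ_\sigma(\bZ)$, $\bfZ_\tau(\bZ)$, and $\bfP_W(\bZ)$ using Borel's reduction theory and the Orr--Schnell Siegel-property result~\cite{MR4593766}, and finally invoke the finiteness axiom of fundamental sets. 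Without this input, the termination of your "explicit blowups at finitely many problematic boundary points" is unjustified: there is a priori no bound on how many $\Gamma$-translates of a two-dimensional cone can slice a given one, so the subdivision process could continue indefinitely. You also do not articulate the two-step subdivision (first cutting by $\sigma_j\cap\tAd_\gamma(\sigma_i)$ of codimension one, then removing rays $\tAd_\gamma(\tau_j)$ from the resulting pieces), nor the fact that realizing the resulting weak fan as $\Sigma_{\Phi,\olS}$ for an actual blow-up $\olS$ requires yet further subdivision, since each blow-up at a codimension-two stratum introduces the specific ray generated by $N_i+N_j$.

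For stage~(d), the paper simply cites Usui's theorem~\cite{MR2237263} (Theorem~\ref{T:Usui}): once the collection is a weak fan, $\Phi_\Sigma(\olS)$ is automatically a compact algebraic space. Your proposed route through o-minimal GAGA plus Artin algebraization of the boundary is more speculative — the Bakker--Brunebarbe--Tsimerman theorem gives quasi-projectivity of $\Phi(S)$, not directly algebraicity of the boundary of the KNU extension — and is unnecessary; you should instead invoke Usui's result directly.
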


\noindent Theorem \ref{T:main1} is a corollary of Theorem \ref{T:main3}, Kato--Nakayama--Usui's Theorem \ref{T:KNU}, and Usui's Theorem \ref{T:Usui}.  The key technical result of the paper is a certain finiteness statement (Theorem \ref{T:doublecoset}), from which we are able to deduce Theorem \ref{T:main3}.  The proof of the latter (in \S\ref{S:prf-main3}) yields a refinement of Theorem \ref{T:main1}; see Corollary \ref{C:prf} and Remark \ref{R:prf} for details.

Theorem \ref{T:main1} holds in the more general context of variations of graded-polarized mixed Hodge structures (Theorem \ref{T:main1-mixed}).

\begin{remark}[Geometric examples] \label{eg:geometric}
The extensions $\Phi_\Sigma$ are known to exist when $D$ is hermitian, or when $\tdim\,S=1$, cf.~\S\ref{S:completions}.  Examples with $D$ non-hermitian and $\tdim\,S \ge 2$ have been constructed only very recently \cite{Chen1221, MR4441155}.  In those works the $\bZ$-PVHS is induced by a two-parameter family $f:\cX \to S$ of Calabi--Yau three-folds, with Hodge numbers $(1,2,2,1)$, arising as (the mirror of) a complete intersection in a toric variety.  In both cases the authors explicitly exhibit the weak fan necessary to complete the period maps via Kato--Nakayama--Usui's construction \cite{MR2465224}.

Given an arbitrary family $f : \cX \to S$ of Calabi--Yau three-folds, there are four possible (discrete) types of Hodge theoretic degenerations that may arise \cite[Example 5.8]{MR4012553}.  The first author has shown that if there exists a smooth projective completion $\olS \supset S$ so that all degenerations along the simple normal crossing divisor $\olS \bs S$ are of type I or type IV, then (possibly after a proper modification) there is a morphism of locally analytically constructible spaces $\Phi_\Sigma : \olS \to \Gamma \bs D_\Sigma$ that extends the period map \cite{deng2023}.  The constraint on the degeneration type ensures that the nilpotent orbits asymptotically approximating $\Phi$ form a weak fan of restricted type, and the extension of the period map is then a consequence of a slight generalization Kato-Nakayama-Usui's theory (loc.~cit.).
\end{remark}

\begin{remark}
The image $\Phi_\Sigma(\olS)$ of the extension in Theorem \ref{T:main1} is a compact algebraic space.  It is an interesting problem to identify an ample line bundle on $\Phi_\Sigma(\olS)$.  The work \cite{GGR2LB} and \cite[\S5]{GGRinfty} suggests the following natural question: let $\Lambda$ be the augmented Hodge line bundle on $\olS$ (a.k.a.~the Griffiths line bundle).  Do there exist integers $0 < m$ and $0 \le a_i$ so that $\Phi_\Sigma(\olS) = \tProj( \olS , m\Lambda - \sum a_i [S_i])$?
\end{remark}

% --------------------
\subsection{Completions of period maps} \label{S:completions}
% --------------------

Suppose that $D$ is hermitian and that $\Gamma$ is arithmetic.  (Geometric families with $D$ hermitian include curves, principally polarized abelian varieties, and K3 surfaces.)  In this case, there exist compactifications of $\Gamma \bs D$, and completions of period mappings are well-understood.  The two salient constructions are those of Satake--Baily--Borel (SBB) and Ash--Mumford--Rapaport--Tai (AMRT).  Satake constructed a family of topological compactifications of $\Gamma \bs D$ \cite{MR2189882, MR0170356}.  Baily--Borel showed that a minimal Satake compactification $\Gamma \bs D^*$ is projective algebraic (the Hodge line bundle is ample) \cite{MR0216035}.  Moreover, given any smooth projective completion $\olS \supset S$ with simple normal crossing divisor $\olS \bs S$, there is a completion $\Phi^*: \olS \to \Gamma \bs D^*$ \cite{MR0338456}.  The SBB compactification $\Gamma \bs D^*$ is singular (with log canonical singularities \cite{AlexeevBBsing}).  AMRT constructed normalizations $\Gamma \bs D_\Sigma \to \Gamma\bs D^*$ \cite{MR0457437, MR0485875} (introducing toric geometry in the process).  %\bents{What do we know about extensions?  Does $\Phi^*$ lift to $\Phi_\Sigma: \olS \to \Gamma \bs D_\Sigma$.  If yes, reference?}

\smallskip

Much less is known about completions of the period map \eqref{E:Phi} when $D$ is not hermitian.  In the very special case that the period map takes value in a hermitian Mumford--Tate subdomain of $D$, as in \cite{MR1910264, MR2789835, MR1416355, MR3886178, MR2510071}, the SBB and AMRT constructions apply.  And Sommese has shown that every period map with one-dimensional image may be completed \cite{MR0324078}.  In general, the goal is to develop Hodge-theoretically meaningful analogs of SBB and AMRT for arbitrary period maps.  One subtlety here is that one no longer expects a good compactification of the ambient $\Gamma \bs D$.  %\bents{Reference with good discussion of issues involved here?}  
Instead the desired analogs are projective algebraic completions of \eqref{E:Phi} that encode the same Hodge-theoretic data as SBB and AMRT.  That data are given by the nilpotent orbits asymptotically approximating $\Phi$.  The two-parameter generalization of SBB is given in \cite{GGLR}, under a local Torelli hypothesis.  Our main result (Theorem \ref{T:main1}) is the two-parameter generalization of AMRT.

Motivation for completing the period map \eqref{E:Phi} includes:  (i) The simple fact that compact spaces are easier to study than noncompact. (ii) Let $\cM$ be the moduli space for smooth varieties $X$ of general type and with fixed Hilbert polynomial.  In a sweeping generalization of the Deligne--Mumford compactification $\overline{\cM}_g$ of the moduli space of curves, Koll\'ar, Shepherd-Barron and Alexeev (KSBA), with contributions of many others, constructed a canonical projective completion $\olM$ \cite{MR3184176}.  However, little is known above the boundary varieties, or the global structure of the moduli space and its boundary $\partial\cM$.  A basic idea is to use the period map and its completions to study $\cM$ and its compactifications \cite{CFPR2022, MR3495110}.

% --------------------
\subsection{Nilpotent orbits}
% --------------------

Fix a period map $\Phi : S \to \Gamma \bs D$ as in \S\ref{S:mainresult}, and a smooth projective completion $\olS \supset S$ with simple normal crossing divisor $\partial S = \olS \bs S$.  To every point $s \in \partial S$, Schmid's nilpotent orbit theorem \cite{MR0382272} associates a nilpotent orbit (Definition \ref{dfn:nilpotentorbit}) that asymptotically approximates (a local lift of) $\Phi$ in a neighborhood of $s$.  So \emph{it is natural to try to complete $\Phi$ using nilpotent orbits.}  

\begin{definition} \label{dfn:nilpotentorbit}
Set $V_\bQ=V_\bZ \ot_\bZ \bQ$, and let $\fg_\bQ$ denote the Lie algebra of linear maps $\x : V_\bQ \to V_\bQ$ such that $Q(\x(u),v)+Q(u,\x(v)) = 0$ for all $u,v\in V_\bQ$.  Let $\check D$ denote the compact dual of $D$.  A \emph{nilpotent orbit pair} $(\s,F)$ consists of:
\begin{i_list}
\item \label{i:cone}
A \emph{rational nilpotent cone} $\s = \tspan_{\bQ_{>0}}\{N_1,\ldots,N_k\}$ of pairwise commuting $N_j \in \fg_\bQ$ (possibly zero).
\item
A filtration $F \in \check D$ so that $N_j(F^p) \subset F^{p-1}$.  In particular, the \emph{nilpotent orbit}
\[
  \nu(z_1,\ldots,z_k) \ = \ \exp(z_1N_1+\cdots z_kN_k) \cdot F
\]
is a horizontal map $\bC^k \to \check D$. 
\item
The nilpotent orbit satisfies $\nu(z_1,\ldots,z_k) \in D$ if $\tIm\,z_j \gg0$.
\end{i_list}
Let $\cN_{\s,F} = \nu(\bC^k) \subset \check D$ denote the image of $\nu$.  In a mild abuse of nomenclature, we will also refer $\cN_{\s,F}$ as a \emph{nilpotent orbit}.
\end{definition}

\begin{example} \label{eg:triv}
If $F \in D$, then $\nu = (0,F)$ is a nilpotent orbit pair.
\end{example}

\begin{example}
If $(\s,F)$ is a nilpotent orbit pair and $F' \in \cN_{\s,F}$, then $(\s,F')$ is also a nilpotent orbit pair and $\cN_{\s,F} = \cN_{\s,F'}$.
\end{example}

\begin{remark} \label{R:W}
Given a nilpotent orbit pair $(\s,F)$, each nilpotent $N\in \s$ determines a rational increasing filtration $W(N)$ of $V_\bQ$ \cite[\S A.3]{MR3290123}.  The filtration satisfies 
\[
  0 \,\subset\, W(N)_{-\sfn} \,\subset\, W(N)_{1-\sfn} \,\subset \cdots \subset\,
  W(N)_{\sfn-1} \,\subset\, W(N)_\sfn \,=\, V_\bQ \,,
\]
and is $\bQ$-isotropic
\begin{equation}\label{E:WisQiso}
   Q\left( W(N)_\ell) \,,\, W(N)_m \right) \ = \ 0 \,,\quad \forall \ \ell+m < 0 \,.
\end{equation}
%In particular, the polarization defines an isomorphism $\tGr_{-\ell}^{W(N)} \simeq (\tGr_\ell^{W(N)})^\vee$, for all $\ell$.
The filtration $W(N)$ is independent of our choice of $N \in \s$ \cite{MR590823, MR664326}.  So $W(\s)$ is well-defined.  Moreover, $(W(\s)[-\sfn],F)$ is a mixed Hodge structure \cite{MR0382272}; here $\sfn \in \bZ$ is the weight of the Hodge structures on $V_\bZ$.
\end{remark}

The approximating nilpotent orbit $\nu(z_1,\ldots,z_k)$ in Schmid's theorem depends on a choice of local coordinates about $s \in \partial S$.  One may think of the theorem as describing how to complete the period map locally about $s$, at least set-theoretically.  The challenge is to give a global, algebraic completion of the period map that retains all the information in the nilpotent orbit that is independent of the choice of local coordinates.  That information is the $\Gamma$--equivalence class of the pair $(\s,\cN_{\s,F})$.  In the case that $\Gamma \bs D$ is locally hermitian symmetric, these equivalence classes are precisely the objects parameterized by the AMRT compactification $\Gamma \bs D_\Sigma$, \cite{MR605337}.  Kato--Usui \cite{MR2465224} and Kato--Nakayama--Usui \cite{MR3084721} developed a theory generalizing the construction of AMRT to arbitrary period domains.  In order to obtain the desired global extension $\Phi_\Sigma : \olS \to \Gamma \bs D_\Sigma$ of the period map \eqref{E:Phi} it is necessary to exhibit a ``weak fan'' $\Sigma$ that is compatible with $\Phi$, cf.~\S\ref{S:KNU}.  When the fan exists, the Kato--Usui space $\Gamma \bs D_\Sigma$ parameterizes $\Gamma$--conjugacy classes of pairs $(\s,\cN_{\s,F})$.  Such fans are known to exist when $D$ is hermitian (one recovers AMRT) and when $\tdim\,\Phi(S)=1$.  In general, it is quite difficult to demonstrate the existence of a suitable weak fan; to the best of our knowledge, the list in Example \ref{eg:geometric} is exhaustive.  The main technical result of this paper is the existence of a compatible weak fan when $\tdim\,S=2$ (Theorem \ref{T:main3}). 

% --------------------
\subsection{Relationship to the conjectural analog of SBB}
% --------------------
Before turning to the details of Kato-Nakayama-Usui (KNU) construction, we wish to comment on the relationship to an analog of SBB.

Each nilpotent orbit pair $(\s,F)$ determines a limiting mixed Hodge structure.  Quotienting out the extension data, we obtain a polarized Hodge structure $\phi(\s,F)$.  This Hodge structure is independent of the choice of $F \in \cN_{\s,F}$, so that $\phi(\s,\cN_{\s,F})$ is well-defined.  The map $(\s,\cN_{\s,F}) \mapsto \phi(\s,\cN_{\s,F})$ is $\Gamma$--equivariant.  (See \cite{GGRinfty} for details.)  The AMRT normalization $\Gamma \bs D_\Sigma \to \Gamma \bs D^*$ of SBB is the map induced by $\phi$.  This is the sense in which AMRT and SBB are the maximal and minimal Hodge--theoretically meaningful compactifications of $\Gamma\bs D$, respectively. 

A conjectural analog of SBB has been defined for arbitrary period maps \cite{GGRinfty}.  The analog is an extension $\Phi^* : \olS \to \wp^*$ of the period map \eqref{E:Phi}.  Here $\wp^* = \Phi^*(\olS)$ is a compact Hausdorff space compactifying the image $\wp = \Phi(S)$ of the period map, and the map $\Phi^*$ is a proper, continuous extension of $\Phi$.  If the period map also admits a KNU extension $\Phi_\Sigma : \olS \to \Gamma\bs D_\Sigma$ (for example, as in Theorem \ref{T:main1}) then the map $\Phi^*$ will factor through $\Phi_\Sigma$, and in this case $\wp^*$ is the image of $\Phi_\Sigma(\olS)$ under the map induce by $\phi$. When $D$ is hermitian, $\wp^*$ is precisely the closure of $\wp$ in the SBB compactification $\Gamma\bs D^*$, and $\Phi^*$ is Borel's extension.  In general, it is conjectured that $\wp^*$ is a complex variety (in which case the Riemann extension theorem implies $\Phi^*$ is holomorphic); that $\Lambda$ is semi-ample on $\olS$; and that $\Phi^*$ factors through $\olS \to \tProj(\olS,\Lambda)$, with $\tProj(\olS,\Lambda) \to \wp^*$ finite.  The conjecture is known to hold in three situations: (i) when $D$ is hermitian (in which case we recover SBB, and have the stronger statement $\tProj(\olS,\Lambda) = \wp^*$); (ii) when $\tdim\,\wp = 1$ \cite{MR0324078}; and (iii) when $\tdim\,S=2$ and $\Phi$ satisfies local Torelli \cite{GGLR}.

% --------------------
\subsection{The Kato-Nakayama-Usui construction} \label{S:KNU}
% --------------------

The nilpotent orbit pair $(\s_s,F_s)$ associated to $s \in \partial S$ by the nilpotent orbit theorem is well-defined up to the action of $\Gamma$.  Let 
\[
  \Sigma_{\Phi,\olS} 
  \ = \ \{ \s_s \ | \ s \in \partial S \} \,\cup\, \{0\} 
\]
denote the collection of all nilpotent cones arising in this way, plus the trivial cone.  

\begin{remark} %\label{R:Sigma}
(i) If $\s = \tspan_{\bQ_{>0}}\{ N_1 , \ldots , N_k \} \in \Sigma_{\Phi,\olS}$, then $\Sigma_{\Phi,\olS}$ also contains every face $\s_I = \tspan_{\bQ_{>0}}\{ N_i \ | \ i \in I \}$ of $\s$; here $I$ runs over all subsets of $\{1,\ldots,k\}$.  By convention $\s_\emptyset$ is the zero cone $\{0\}$.

(ii) The nilpotent cones $\s_s$ are locally constant on the smooth strata of $\partial S$.  Consequently $\Gamma$ acts on $\Sigma_{\Phi,\olS}$ with finitely many orbits.
\end{remark}

\begin{definition}[{\cite{MR2465224}}]\label{dfn:SigmaKU}
Let $\Sigma$ be a collection of rational nilpotent cones $\s \in \fg_\bQ$, that is closed under taking faces.  We say that the local monodromies of $(\Phi,\olS)$ are \emph{in the directions in $\Sigma$} if for every pair $(\s_s,F_s)$ there exists a unique minimal $\tau_s \in \Sigma$ so that $\s_s \subset \tau_s$ and $(\tau_s,F_s)$ is a nilpotent orbit pair.
\end{definition}

\begin{definition} \label{dfn:weakfan}
A collection $\Sigma$ of rational nilpotent cones $\s \in \fg_\bQ$, that is closed under taking faces, is a \emph{weak fan} if for all $\s,\tau \in \Sigma$ we have $\s = \tau$ whenever:
\begin{i_list}
\item
the intersection $\s \cap \tau$ is nonempty, and 
\item
there exists $F \in \check D$ so that both $(\s,F)$ and $(\tau,F)$ are nilpotent orbit pairs.
\end{i_list}
\end{definition}

\noindent Given a collection $\Sigma$ of rational nilpotent cones define
\[
  D_{\Sigma} \ = \ 
  \{ \cN_{\s,F} \ | \ \s \in \Sigma
  \hbox{ and } (\s,F) \hbox{ is a nilpotent orbit pair}\} \,.
\]
Note that $D \subset D_{\Sigma}$ (Example \ref{eg:triv}).  The quotient $\Gamma \bs D_{\Sigma} \supset \Gamma \bs D$ parameterizes $\Gamma$--conjugacy classes of nilpotent orbits $\cN_{\s,F}$ with $\s \in \Sigma$.

\begin{theorem}[Kato--Nakayama--Usui {\cite{MR3084721}}] \label{T:KNU}
If $\Sigma$ is a weak fan, then $\Gamma \bs D_{\Sigma}$ admits the structure of a logarithmic manifold.  If the local monodromies of $(\Phi,\olS)$ are in the directions in $\Sigma$ \emph{(Definition \ref{dfn:SigmaKU})}, then the period map $\Phi$ extends to a morphism $\Phi_\Sigma : \olS \to \Gamma \bs D_{\Sigma}$ of logarithmic manifolds.
\end{theorem}

\begin{remark}\label{R:KNU}
The extension $\Phi_\Sigma$ maps $s\in \olS$ to the $\Gamma$ conjugacy class of the pair $(\tau_s, F_s)$ in Definition \ref{dfn:SigmaKU}.
\end{remark}

%\begin{remark}\label{R:KNU}
%Theorem \ref{T:KNU} holds with $\Sigma^{(2)}$ in place of $\Sigma$, and $\olS_{(2)}$ in place of $\olS$.  That %is, if $\Sigma = \Sigma^{(2)}$ is a weak fan, then there is a morphism of log manifolds $\Phi_\Sigma : %\olS_{(2)} \to \Gamma\bs D_\Sigma$ extending the period map \eqref{E:Phi}.
%\end{remark}

\begin{theorem}[Usui {\cite{MR2237263}}] \label{T:Usui}
The image $\Phi_\Sigma(\olS)$ is a compact algebraic space.
\end{theorem}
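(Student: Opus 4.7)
The plan is to combine a basic compactness argument with Artin's algebraization theorem for compact Moishezon analytic spaces.

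First, since $\olS$ is a smooth projective variety it is compact, and by Theorem \ref{T:KNU} the extension $\Phi_\Sigma:\olS\to \Gamma\bs D_{\Sigma_{\Phi,\olS}}$ is a morphism of logarithmic manifolds, in particular a continuous map. Therefore $\Phi_\Sigma(\olS)$ is compact. Moreover $\Phi_\Sigma$ is proper (source compact), so its image is a closed subset of $\Gamma\bs D_{\Sigma_{\Phi,\olS}}$ and inherits, via the underlying log-analytic structure on the Kato--Usui space, the structure of a reduced complex analytic space.

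Second, I would verify that $\Phi_\Sigma(\olS)$ is Moishezon. By Bakker--Brunebarbe--Tsimerman \cite{MR4557401}, the image $\Phi(S)\subset \Gamma\bs D$ of the proper period map is a quasi-projective algebraic variety. Under the inclusion $\Gamma\bs D\hookrightarrow \Gamma\bs D_{\Sigma_{\Phi,\olS}}$, this quasi-projective variety sits as an open dense subset of $\Phi_\Sigma(\olS)$, because $S\subset \olS$ is open dense and $\Phi_\Sigma|_S=\Phi$. Hence the meromorphic function field of $\Phi_\Sigma(\olS)$ contains the function field of $\Phi(S)$, whose transcendence degree over $\bC$ equals $\tdim\,\Phi_\Sigma(\olS)$. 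This is precisely the Moishezon condition.

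Third, I would invoke Artin's theorem that every compact reduced Moishezon analytic space is the analytification of a proper algebraic space over $\bC$; applied to $\Phi_\Sigma(\olS)$, this yields the desired algebraic space structure. The main technical obstacle is the first step: Kato--Usui log manifolds are not Hausdorff in general, so one has to verify that the image of the proper morphism $\Phi_\Sigma$ lies in a portion of $\Gamma\bs D_{\Sigma_{\Phi,\olS}}$ on which the underlying topology is Hausdorff and carries an honest reduced-analytic structure, so that meromorphic function fields behave well and Artin's theorem applies. Securing this uses the weak fan hypothesis on $\Sigma_{\Phi,\olS}$ together with the neatness of $\Gamma$, and is the substantive content of Usui's original argument.
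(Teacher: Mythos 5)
The paper does not prove Theorem \ref{T:Usui}; it is quoted as a black box from Usui's 2006 paper [Usu06], so there is no internal proof to compare against. Evaluated on its own terms, your reconstruction has the right general shape (compactness, then a Moishezon criterion, then Artin/Moishezon algebraization), and you correctly flag the delicate point that $\Gamma\bs D_\Sigma$ is only a log manifold and not Hausdorff in general. But there is a genuine gap in the Moishezon step.

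You assert that because $\Phi(S)$ is a quasi-projective dense open subset of $\Phi_\Sigma(\olS)$, the meromorphic function field of $\Phi_\Sigma(\olS)$ contains $\bC\bigl(\Phi(S)\bigr)$. That implication runs in the wrong direction. A meromorphic function on the compact image restricts to one on the dense open; but a rational function on the dense open $\Phi(S)$ need not extend meromorphically across the boundary $\Phi_\Sigma(\olS)\bs\Phi(S)$, which here is typically a divisor, so no codimension-$\geq 2$ extension theorem is available. Your argument does not supply the extension, and hence does not actually bound the transcendence degree of $\mathcal{M}\bigl(\Phi_\Sigma(\olS)\bigr)$ from below. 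One plausible repair is to run the argument through $\olS$ rather than through $\Phi(S)$: $\Phi_\Sigma:\olS\to\Phi_\Sigma(\olS)$ is a proper surjection from a smooth projective variety, so one can try to exhibit a big line bundle on the image (e.g.\ a descent of the Hodge line bundle, whose pullback to $\olS$ is semi-ample and big on the generic fiber dimension) and deduce Moishezonness that way; but that requires actual work, and is not in your text.

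Two further remarks. First, Usui's theorem dates to 2006 while the Bakker--Brunebarbe--Tsimerman result you invoke for the quasi-projectivity of $\Phi(S)$ is from 2023; so even if the gap were filled, you would not be reconstructing Usui's argument but writing a new one. Second, my understanding of Usui's actual route in [Usu06] is that he works directly with the proper map $\Phi_\Sigma:\olS\to\Gamma\bs D_\Sigma$ and the induced proper equivalence relation $R\subset\olS\times\olS$ (or, equivalently, with the Hodge line bundle on the log manifold), using Chow-type algebraicity of $R$ inside the projective $\olS\times\olS$ together with the structure theory of Kato--Usui spaces to realize $\Phi_\Sigma(\olS)$ as a quotient algebraic space -- rather than deducing Moishezonness from algebraicity of $\Phi(S)$. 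Either way, the extension-of-meromorphic-functions step as you wrote it does not go through.
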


\noindent Theorem \ref{T:main1} is a corollary of Theorem \ref{T:KNU}, Theorem \ref{T:Usui} and Theorem \ref{T:main3}.

\begin{theorem} \label{T:main3}
Assume $\tdim\,S=2$.  There exists a smooth projective completion $\olS \supset S$, with simple normal crossing divisor $\partial S = \olS \bs S$, and a weak fan $\Sigma$ so that the local monodromies of $(\Phi,\olS)$ are in the directions in $\Sigma$. 
\end{theorem}

\begin{remark} \label{R:main3}
Theorem \ref{T:main3} is proved in \S\ref{S:prf-main3}.  The basic idea is to fix some smooth projective completion $\tilde S \supset S$ with simple normal crossing boundary divisor, and show that after $\mathrm{Ad}_{\Gamma}$-finite subdivision of the cones in $\Sigma_{\Phi,\tilde S}$ we obtain a weak fan $\Sigma''_{\Phi,\tilde S}$ (Proposition \ref{P:wf}).  We then show (in \S\ref{S:prf-main3}) that there is a smooth projective completion $\olS \supset S$, that is obtained from $\tilde S$ by a finite sequence of blow-ups, and with the property that the local monodromies of $(\Phi,\olS)$ are in the directions in $\Sigma = \Sigma''_{\Phi,\tilde S}$.
\end{remark}

\begin{remark}
Theorem \ref{T:doublecoset} below is the key result of the paper, and there is no constraint on the dimension of the cones in that theorem or its corollary.  However, the restriction to dimension at most two is necessary for our proof of Proposition \ref{P:wf}.  The difficulty generalizing the lemma is due to the combinatorial complications that arise when $\tdim\,\s \ge 3$.  In some cases these complications can be circumvented; see \cite{deng2023}.
\end{remark}

\subsection*{Organization of the paper.}
In Section \ref{S:finiteness} we establish the finiteness theorem (Theorem \ref{T:doublecoset}) which is the main input for the construction of weak fans in Section \ref{S:KNU}; In Section \ref{S:weakfan} we use Theorem \ref{T:doublecoset} to produce a weak fan compatible with any two-parameter period map. In Section \ref{S:VMHS} we clarify our construction also applies to mixed period maps and the construction of generalized Néron models. Section \ref{S:prf-BKT+} is devoted to the proof of Theorem \ref{T:BKTplus} and is the main technical part of the paper. Section \ref{S:appendix} is used as an appendix for necessary background in Siegel sets and fundamental sets.

% --------------------
\subsection*{Acknowledgments}
% --------------------

The authors are indebted to Matt Kerr for several illuminating and enriching discussions.  We also thank Ben Bakker, Bruno Klingler, Phillip Griffiths, Chikara Nakayama and Christian Schnell for related conversations.
% --------------------
\section{A finiteness property} \label{S:finiteness}
% --------------------

Fix any two rational nilpotent cones $\s,\tau \subset \fg_\bQ$, as in Definition \ref{dfn:nilpotentorbit}\ref{i:cone}.  Given $\gamma \in \Gamma$, set $\tau_\gamma = \tAd_\gamma(\tau)$.  Define
\begin{subequations}\label{SE:Ist}
\begin{equation}
  \Gamma_{\s,\tau} \ = \ 
  \left\{ \gamma \in \Gamma \ \left| \ 
  \begin{array}{l}
  \s \cap \tau_\gamma \not= \emptyset \,,\ \exists \ F_\gamma \in \check D
  \hbox{\ s.t.\ } (\s,F_\gamma) \hbox{ and}\\
  (\tau_\gamma , F_\gamma) 
  \hbox{ are nilpotent orbit pairs} 
  \end{array}
  \right. \right\} \,,
\end{equation}
and let
\begin{equation}
\cI_{\s,\tau} \ = \ 
  \{ \sigma \cap \tau_\gamma \ | \ 
  \gamma \in \Gamma_{\s,\tau} \} 
\end{equation}
\end{subequations}
be the associated collection of rational nilpotent cones.

Let  
\[
  \bfG \ = \ \tAut(V_\bQ,Q) \ = \ 
  \{ g \in \tAut(V_\bQ) \ | \ Q(gu,gv) = Q(u,v) \,,\ 
  \forall \  u,v \in V_\bC \}
\]
be the $\bQ$--algebraic group of invertible linear maps $V_\bQ \to V_\bQ$ preserving $Q$.  The Lie group 
\[
  G \ = \ \bfG(\bR)\footnote{In the rest of the paper, we will use the boldface character to define an algebraic group over $\bQ$, and the corresponding normal character to define the $\bR$-points of this group.}
\]
of real points is the automorphism group of $D$.  Let $\bfZ_\s$ and $\bfZ_\tau$ denote the $\bQ$-algebraic subgroups of $\bfG$ centralizing $\s$ and $\tau$, respectively.  We have $\Gamma \leq \bfG(\bZ)$.  Let 
\[
  \pi : \bfG(\bZ)  \ \to \ 
  \bfZ_\s(\bZ) \bs \bfG(\bZ) / \bfZ_\tau(\bZ)
\]
denote the natural projection to the double coset space.

\begin{theorem}\label{T:doublecoset}
The image $\pi(\Gamma_{\s,\tau})$ is finite.
\end{theorem}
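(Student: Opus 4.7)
The plan is to recast the double-coset statement as a finiteness of $\bfZ_\s(\bZ)$-orbits on the set of conjugate cones $\{\tau_\gamma : \gamma \in \Gamma_{\s,\tau}\}$, and then to exploit the Hodge-theoretic rigidity supplied by the common polarizing filtration $F_\gamma$. The reduction is immediate: writing $\gamma' = z_\s\,\gamma\,z_\tau$ with $z_\s \in \bfZ_\s(\bZ)$ and $z_\tau \in \bfZ_\tau(\bZ)$, we compute $\tau_{\gamma'} = \tAd_{z_\s}(\tau_\gamma)$, since $\tAd_{z_\tau}$ preserves the set $\tau$; conversely any $\bfZ_\s(\bZ)$-conjugacy between $\tau_{\gamma'}$ and $\tau_\gamma$ witnesses $\pi(\gamma)=\pi(\gamma')$. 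So it suffices to show that $\{\tau_\gamma : \gamma \in \Gamma_{\s,\tau}\}$ has finitely many $\bfZ_\s(\bZ)$-orbits.

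Next, I would stratify $\Gamma_{\s,\tau}$ by the discrete data $(\s',\tau')$ consisting of the smallest face of $\s$ containing $\s \cap \tau_\gamma$ and the smallest face of $\tau$ containing $\gamma^{-1}(\s \cap \tau_\gamma)$. This stratification is finite, so it suffices to bound the image of a single stratum. On such a stratum a generic element of $\s \cap \tau_\gamma$ lies in $\tint(\s') \cap \tint(\tAd_\gamma(\tau'))$, so the monodromy weight filtrations agree: $W(\s') = \tAd_\gamma(W(\tau'))$. This yields a $\bQ$-rational filtration shifted in degree $-2$ under the adjoint action of both $\s$ and $\tau_\gamma$. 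The classical finiteness of $\bfG(\bZ)$-orbits on the rational flag variety parameterizing such filtrations lets us absorb a further finite ambiguity and work with this filtration equal to a fixed $W$; thereafter every $\tau_\gamma$ (as well as $\s$) lies in the single finite-dimensional $\bQ$-vector space $\fg_{-2}(W) := \{X \in \fg_\bQ : X(W_k) \subset W_{k-2}\}$.

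The remaining step---and the chief obstacle---is to show that the rational nilpotent cones in $\fg_{-2}(W)$ arising as some $\tau_\gamma$ fall into only finitely many orbits under the arithmetic subgroup $\bfZ_\s(\bZ) \cap \tStab(W)$. This is where the Hodge-theoretic hypothesis plays its essential role: the common $F_\gamma$ induces a polarized mixed Hodge structure $(W,F_\gamma)$ of bounded discrete type, so Griffiths' finiteness theorem for the induced polarized pure Hodge structure on $\tgr^W V$ restricts the possible invariants to finitely many arithmetic orbits. One must then upgrade this finiteness on the graded to finiteness of the cones themselves, which is the juncture at which Cattani--Kaplan--Schmid's several-variables $\tSL_2$-orbit theory---governing the joint behavior of pairs of polarized nilpotent orbits sharing a common filtration---should be brought to bear. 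The principal difficulty is precisely that $F_\gamma$ varies with $\gamma$, so the required rigidity must be extracted uniformly from the \emph{existence} of some $F_\gamma$, rather than from a single fixed reference; in particular one must rule out continuous families of mutually non-conjugate cones $\tau_\gamma$ satisfying the hypothesis.
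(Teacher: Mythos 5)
Your opening reductions are sound and track the paper's first two normalizations fairly closely: passing to a situation where $W(\s) = W(\tau_\gamma) = W$ and observing that the relevant cones lie in the degree $-2$ piece $\fg_{-2}(W)$, so that $\Gamma_{\s,\tau} \subset \bfP_W(\bZ)$. But the argument stops short precisely at the step you yourself flag as ``the chief obstacle,'' and nothing you write closes it. The phrases ``should be brought to bear,'' ``one must then upgrade,'' and ``one must rule out continuous families'' are statements of intent, not a proof; the proposal does not establish the required finiteness, and the route it gestures at is not obviously workable.

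Concretely, the appeal to ``Griffiths' finiteness'' for the graded pure Hodge structure on $\tgr^W V$ does not by itself constrain the $\bfZ_\s(\bZ)\cap\tStab(W)$-orbits of the cones $\tau_\gamma$: the cones live in the unipotent direction $\fg^{-1,-1}$, while the graded data sees only what happens on $\tgr^W$, and a priori you could have a continuous family of pairwise non-conjugate cones all inducing isomorphic graded structures. What actually closes the gap in the paper is reduction theory. The existence of a \emph{common} $F_\gamma$ polarizing both $(\s,F_\gamma)$ and $(\tau_\gamma,F_\gamma)$ (which, after normalizing, may be taken $\bR$-split) is exploited by realizing both sets $B_\s$ and $B_{\tau_\gamma}$ inside the single homogeneous space $X = P_W/(K\cap S)$, choosing Borel--Harish-Chandra fundamental sets $\fB_\s,\fB_\tau$ for $\bfZ_\s(\bZ)$, $\bfZ_\tau(\bZ)$ acting on them, and showing (Lemma \ref{L:fundset}) that there is a fundamental set $\fF \subset X$ for $\bfP_W(\bZ)$ containing both. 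The normalization in Lemma \ref{L:norm} then places $F_\gamma \in \fF \cap \gamma\fF$, and the defining Siegel-type property of fundamental sets (Remark \ref{R:fundset}) immediately bounds the number of such $\gamma \in \bfP_W(\bZ)$. This last step — recasting the uniform-in-$\gamma$ existence of $F_\gamma$ as membership in a fixed fundamental domain so that the finiteness of $\{\gamma : \fF\cap\gamma\fF\neq\emptyset\}$ can be invoked — is the missing idea. Without it, or an equivalent replacement, the proposal does not prove Theorem \ref{T:doublecoset}.

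A secondary point worth flagging: the paper's Normalization II (Lemma \ref{L:rat}, rationality of the grading element $Y$, achieved by observing that the orbit of $Y$ under $U_\s\cap U_\tau$ is a nonempty $\bQ$-affine solution set) is what makes $\bfL_W$ a $\bQ$-Levi and hence makes the fundamental-set machinery available. Your proposal skips any analogue of this step, so even if you pursued a reduction-theoretic argument you would need to supply it.
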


\noindent If $\pi(\gamma) = \pi(\gamma')$, then $\s \cap \tau_{\gamma} = \s \cap \tau_{\gamma'}$.  Consequently, we have

\begin{lemma} \label{L:Ifinite}
The map $\Gamma_{\s,\tau} \to \cI_{\s,\tau}$ given by $\gamma \mapsto \s \cap \tau_\gamma$ factors through $\pi$ to define a surjection
\[
  \pi(\Gamma_{\s,\tau}) \ \sur \ 
  \cI_{\s,\tau} \,.
\]
\end{lemma}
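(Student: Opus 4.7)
The plan is to verify the two assertions that the lemma combines: first, that the map $\gamma \mapsto \s \cap \tau_\gamma$ depends only on the double coset $\pi(\gamma) \in \bfZ_\s(\bZ) \bs \bfG(\bZ) / \bfZ_\tau(\bZ)$; second, that the resulting map $\pi(\Gamma_{\s,\tau}) \to \cI_{\s,\tau}$ is surjective. The key input for the first assertion is exactly the observation made in the sentence preceding the lemma statement, so the task amounts to unpacking the definitions.

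For the factoring through $\pi$: suppose $\gamma, \gamma' \in \Gamma_{\s,\tau}$ satisfy $\pi(\gamma) = \pi(\gamma')$. By definition of the double coset, $\gamma' = z_\s \gamma z_\tau$ for some $z_\s \in \bfZ_\s(\bZ)$ and $z_\tau \in \bfZ_\tau(\bZ)$. Reading $\bfZ_\tau$ as the centralizer of $\tau$ under $\tAd$, we have $\tAd_{z_\tau}(\tau) = \tau$, and so $\tau_{\gamma'} = \tAd_{z_\s}(\tau_\gamma)$. Similarly $\tAd_{z_\s}$ acts trivially on every element of $\s$, so for $x \in \s$ the condition $x \in \tAd_{z_\s}(\tau_\gamma)$ is equivalent to $x = \tAd_{z_\s^{-1}}(x) \in \tau_\gamma$. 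Intersecting with $\s$ therefore cancels the twist by $\tAd_{z_\s}$, yielding $\s \cap \tau_{\gamma'} = \s \cap \tAd_{z_\s}(\tau_\gamma) = \s \cap \tau_\gamma$. This is the statement that $\gamma \mapsto \s \cap \tau_\gamma$ descends to a well-defined map on $\pi(\Gamma_{\s,\tau})$.

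Surjectivity is tautological from the very definition $\cI_{\s,\tau} = \{\s \cap \tau_\gamma \ | \ \gamma \in \Gamma_{\s,\tau}\}$: every element of the target is, by definition, of the form $\s \cap \tau_\gamma$ for some $\gamma \in \Gamma_{\s,\tau}$, and is therefore the image of $\pi(\gamma)$. There is no substantial obstacle here; the only subtlety is the interpretation of $\bfZ_\s$ and $\bfZ_\tau$ as centralizers (pointwise stabilizers under $\tAd$) of $\s$ and $\tau$, which is consistent with the $\bfZ$ notation as well as with the author's assertion that $\pi(\gamma) = \pi(\gamma')$ forces genuine equality $\s \cap \tau_\gamma = \s \cap \tau_{\gamma'}$ rather than mere $\bfZ_\s$-equivalence.
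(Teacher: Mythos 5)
Your proof is correct and fills in the details of the paper's one-sentence observation preceding the lemma (``If $\pi(\gamma)=\pi(\gamma')$, then $\s\cap\tau_\gamma=\s\cap\tau_{\gamma'}$''), together with the tautological surjectivity. Your reading of $\bfZ_\s$, $\bfZ_\tau$ as pointwise centralizers under $\tAd$ (rather than setwise stabilizers of the cones) is the intended one — the paper itself refers to $Z_\s$ as ``the centralizer,'' and as you note, the displayed factorization would fail under the stabilizer interpretation, since one would only get $\s\cap\tau_{\gamma'}=\tAd_{z_\s}(\s\cap\tau_\gamma)$.
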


\begin{corollary} \label{C:Ifinite}
The collection $\cI_{\s,\tau}$ is finite.
\end{corollary}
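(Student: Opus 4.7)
The plan is a one-line deduction from the two preceding results. By Theorem~\ref{T:doublecoset}, the image $\pi(\Gamma_{\s,\tau})$ in the double coset space $\bfZ_\s(\bZ)\bs\bfG(\bZ)/\bfZ_\tau(\bZ)$ is finite. Lemma~\ref{L:Ifinite} exhibits a surjection $\pi(\Gamma_{\s,\tau}) \sur \cI_{\s,\tau}$ induced by the assignment $\gamma \mapsto \s \cap \tau_\gamma$, which (as noted in the remark preceding the lemma) is constant on double cosets because $\bfZ_\tau(\bZ)$ preserves $\tau$ under $\tAd$ on the right and $\bfZ_\s(\bZ)$ preserves $\s$ on the left. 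Since the image of a finite set under any map is finite, $\cI_{\s,\tau}$ is finite.

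There is essentially no obstacle at this stage: all of the work has been absorbed into Theorem~\ref{T:doublecoset}, which is identified in the introduction as the key technical input of the paper. The role of Corollary~\ref{C:Ifinite} is to repackage that finiteness statement in the cone-theoretic language needed downstream, where the collection $\cI_{\s,\tau}$ of intersection cones will feed into the subdivision arguments used to upgrade $\Sigma_{\Phi,\hat S}^{(2)}$ to a weak fan (Lemma~\ref{L:wf}) and ultimately into the proof of Theorem~\ref{T:main3}. In other words, the real content of this short section lies upstream in Theorem~\ref{T:doublecoset}; the corollary itself is a bookkeeping step.
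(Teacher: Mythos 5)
Your proof is correct and is exactly the paper's intended deduction: Theorem~\ref{T:doublecoset} gives finiteness of $\pi(\Gamma_{\s,\tau})$, and Lemma~\ref{L:Ifinite} provides the surjection $\pi(\Gamma_{\s,\tau}) \sur \cI_{\s,\tau}$, so $\cI_{\s,\tau}$ is finite. Nothing to add.
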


\noindent The remainder of \S\ref{S:finiteness} is occupied with the proof of Theorem \ref{T:doublecoset}.  There is no bound on the dimension of the cones $\s$ in this argument.

\begin{remark}
The alert reader will notice that these three results (Theorem \ref{T:doublecoset}, Lemma \ref{L:Ifinite} and Corollary \ref{C:Ifinite}) all hold under the assumption that $\Gamma$ is contained in \emph{some} arithmetic subgroup of $\mathbf{G}(\bQ)$.  In particular, we need not assume $\Gamma$ is the image of the monodromy representation.  (For example, $\Gamma$ may be a thin subgroup.)  Alternatively the reader might take $\Gamma = \mathbf{G}(\bZ)$, and deduce the general statements as straightforward corollaries of this special case.
\end{remark}

% --------------------
\subsection{Fundamental sets} \label{S:fundsets}
% --------------------

%\colleen{Recommend that you move this discussion to \S\ref{S:appendix}.}
We introduce the concept of fundamental sets in this section. The relationship between fundamental sets and Siegel sets will be reviewed in detail in Section \ref{S:appendix}. Set 
\[
  B_\s \ = \ \{ F \in \check D \ | \ (\s,F) 
  \hbox{ is a nilpotent orbit pair with $\bR$-split } 
  (W(\s)[-\sfn],F) \} \,.
\]
The centralizer $Z_\s = \bfZ_\s(\bR)$ acts on $B_\s$ with finitely many orbits, each of which is open and closed \cite{MR3474815}.

The proof of Theorem \ref{T:doublecoset} makes use of the notation of a fundamental set for the action of $\bfZ_\s(\bZ)$ on $B_\s$.

\begin{definition}[{\cite[\S1.4]{MR0204533}}] \label{dfn:fundset}
Let $\bfH$ be a $\bQ$--algebraic group.  We say $\fH \subset H = \bfH(\bR)$ is a \emph{fundamental set} for $\bfH(\bZ)$ if:
\begin{i_list}
\item\label{i:maxcpt}
$\fH K = \fH$ for some maximal compact subgroup $K \subset H = \bfH(\bR)$,
\item $\bfH(\bZ)\, \fH = H$, and 
\item
$\{ \gamma \in \bfH(\bZ) \ | \ \fH \,\cap\,\gamma\fH \not= \emptyset \}$ is finite.  
\end{i_list}
\end{definition}

\begin{remark}[{\cite[\S1.4]{MR0204533}}]  \label{R:fundset}
If $K_0 \leq K$ is a subgroup, then $\fF = \fH/K_0$ is a \emph{fundamental set} for the action of $\bfH(\bZ)$ on $X = H/K_0$ in the sense that 
\item
\begin{i_list}
\item 
$\bfH(\bZ)\, \fF = X$, and 
\item
$\{ \gamma \in \bfH(\bZ) \ | \ \fF \,\cap\,\gamma\fF \not= \emptyset \}$ is finite. 
\end{i_list}
\end{remark}

Let $\bfH$ be a $\bQ$-algebraic group.  Fundamental sets $\fH \subset H$ for $\bfH(\bZ)$ exist.  We briefly review here those properties that will be needed in the proof of Theorem \ref{T:doublecoset}.   Let $\bfU \leq \bfH$ denote the unipotent radical, the maximal connected unipotent normal subgroups of $\bfH$ \cite[\S11.21]{MR1102012}. There is a $\bQ$-reductive Levi factor $\bfL \leq \bfH$ so that $\bfH = \bfU \rtimes \bfL$ \cite{MR0092928}.  

\begin{lemma}\label{L:GFundSet}
The group $\bfU(\bZ) \rtimes \bfL(\bZ)$ has finite index in $\bfH(\bZ)$. As a consequence, fundamental sets $\fH$ of $H = \bfH(\bR)$ for $\bfH(\bZ)$ are realized as $\fH = \fU \times \fL$, with $\fU \subset U = \bfU(\bR)$ a fundamental set for $\bfU(\bZ)$, and $\fL \subset L = \bfL(\bR)$ a fundamental set for $\bfL(\bZ)$
\end{lemma}
\begin{proof}
    The first statement is \cite[Corollary 6.4]{MR0147566}. For the second, we make the following observations.
\begin{itemize} 
\item
The quotient $\bfU(\bZ)\bs U$ is compact \cite[\S1.4]{MR0204533}, so that $U$ admits a bounded fundamental set $\fU \subset U$ for the action of $\bfU(\bZ)$.  (Because $U$ is unipotent, the maximal compact subgroup is trivial.)
\item
There exists a finite subset $C \subset \bfL(\bQ)$ and a Siegel set $\fS \subset L$, so that $\fL = C \cdot\fS$ is a fundamental set for the action of $\bfL(\bZ)$ on $L$ \cite[Theorem 1.10]{MR0204533}.
\item 
A maximal compact subgroup of $L$ is also a maximal compact subgroup of $H$.
\item
If $(u_i,\lambda_i) \in U \rtimes L$, then the multiplication in $\bfH$ is given by 
\[
  (u_1,\lambda_1) \cdot (u_2,\lambda_2)
  \ = \ ( u_1 (\lambda_1 u_2\lambda_1^{-1}) \,,\, 
  \lambda_1\lambda_2) \,.
\]
\end{itemize}
From this discussion, it is straight-forward to deduce that $\fH = \fU \times \fL$ is a fundamental set for $\bfH(\bZ)$.  
\end{proof}

% --------------------
\subsection{Normalization I: weight filtration} \label{S:nI}
% --------------------

Given $\gamma \in \Gamma_{\s,\tau}$, fix $0\not= M_\gamma \in \s \cap \tAd_\gamma(\tau)$.  Independence of the weight filtration \cite{MR590823, MR664326} implies
\[
  W(\s) \ = \ W(M_\gamma) \ = \ W(\tAd_\gamma(\tau)) 
  \ = \ \gamma W(\tau) \,.
\]
Set $W = W(\s)$.

Without loss of generality, we may assume that $\tid \in \Gamma_{\s,\tau}$.  Else, fix $\gamma_o \in \Gamma_{\s,\tau}$; and replace $\tau$ with $\tAd_{\gamma_o}(\tau)$, and $\gamma$ with $\gamma\,\gamma_o^{-1}$.  The intersections $\s \cap \tAd_\gamma(\tau)$ are invariant under this operation, and we have 
\begin{equation}\label{E:norm}
  \Gamma_{\s,\tau} \ \subset \ \bfP_W(\bZ)
\end{equation}
where $\bfP_W$ is the $\bQ$-algebraic parabolic subgroup preserving the weight filtration.

Recall that there exists an element $\delta\in \mathfrak{g}_{\bR}$ commuting with \emph{every} morphism of $(W[-\sfn],F_\gamma)$, including the elements of both $\sigma$ and $\mathrm{Ad}_{\gamma}\tau$, so that $(W[-\sfn], e^{-\bi\delta} F_\gamma)$ is $\bR$-split \cite[(2.20)]{MR840721}. Consequently, by replacing $F_\gamma$ with $e^{-\bi\delta}F_\gamma$, we may assume $(W[-\sfn],F_\gamma)$ is $\bR$--split.

% --------------------
\subsection{Normalization II: rationality}
% --------------------

Given $F \in B_\s$, let 
\begin{subequations}\label{SE:ds-V}
\begin{equation}
  V_\bC \ = \ \op\, V^{p,q}_{W,F} \,, \quad
  \overline{V^{p,q}_{W,F}} \ = \ V^{q,p}_{W,F}
\end{equation}
be the Deligne splitting of the mixed Hodge structure $(W[-\sfn],F)$ \cite[\S2]{MR840721}.  We have 
\begin{equation} %\label{E:ds-FW}
  F^k \ = \ \bigoplus_{p\ge k} V^{p,q} _{W,F}
  \tand
  W[-\sfn]_\ell \ = \ \bigoplus_{p+q \le \ell} V^{p,q}_{W,F} \,.
\end{equation}
\end{subequations}
The induced mixed Hodge structure on $\fg$, cf.~\cite[\S7.5.2]{MR3290129}, has Deligne splitting 
\begin{subequations}\label{SE:ds-g}
\begin{equation}
  \fg_\bC \ = \ \op\,\fg^{p,q}_{W,F}\,,\quad
  \fg^{p,q}_{W,F} \ = \ 
  \{ \x \in \fg_\bC \ | \ 
  \x(V^{r,s}_{W,F}) \subset V^{p+r,q+s}_{W,F} \} \,,
\end{equation}
is also $\bR$--split $\overline{\fg^{p,q}_{W,F}} = \fg^{q,p}_{W,F}$, and $\s,\tau \subset \fg^{-1,-1}_{W,F}$.  The complex Lie subalgebras of $\fg_\bC$ stabilizing $W$ and $F$ are 
\begin{equation}
  \fp_{W,\bC} \ = \ \bigoplus_{p+q \le 0} \fg^{p,q}_{W,F}
  \tand
  \fp_{F,\bC} \ = \ \bigoplus_{p \ge 0} \fg^{p,q}_{W,F} \,,
\end{equation}
\end{subequations}
respectively.  The endomorphism $Y : V_\bC \to V_\bC$ defined by 
\begin{equation}\label{E:Y-LMHS}
Y(v) = (p+q-\sfn)v, \ \forall v \in V^{p,q}_{W,F}
\end{equation}
is an element of the Lie algebra
\begin{subequations}\label{SE:S}
\begin{equation}
  \fs \ = \ \fg_\bR \,\cap\, \fg^{0,0}_{W,F}
\end{equation}
of the stabilizer
\begin{equation}
  \tStab_G(W,F) \ = \ 
  \{ g \in G \ | \ g(W_\ell) = W_\ell \,,\ g(F^k) = F^k \}
  \ \leq \ P_W
\end{equation}
\end{subequations}
of the weight and Hodge filtrations \cite[\S2]{MR840721}.

Note that $\bfP_W$ naturally acts on $\tGr^W_\ell = W_\ell/W_{\ell-1}$.  The unipotent radical $\bfU_W \leq \bfP_W$ is the subgroup acting trivially on the $\tGr^W_\ell$.  Since $W$ is determined by $\s$, we have
\begin{equation}\label{E:ZinP}
  \bfZ_\s \,,\ \bfZ_\tau \ \leq \ \bfZ_{M_\tid} 
  \ \leq \ \bfP_W \,.
\end{equation} 
The unipotent radicals of $\bfZ_\s$ and $\bfZ_\tau$ are $\bfU_\s = \bfU_W \cap \bfZ_\s$ and $\bfU_\tau = \bfU_W \cap \bfZ_\tau$, respectively \cite[Theorem 5.2]{MR3474815}.

\begin{lemma} \label{L:rat}
There exists $g \in U_\s \cap U_\tau$ so that $\tAd_g(Y) \in \fg_\bQ$.
\end{lemma}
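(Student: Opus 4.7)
The plan is to realize $Y$ as a real point of a suitable $\bQ$-torsor under $\bfU_\s \cap \bfU_\tau$, and then invoke the triviality of torsors under unipotent $\bQ$-groups (in characteristic zero) to extract a rational point.

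First I verify that $Y \in \fz_\s \cap \fz_\tau$. Since $\s, \tau \subset \fg^{-1,-1}$ by \eqref{SE:ds-g}, and $Y$ acts on $\fg^{p,q}$ by multiplication by $p+q$, we have $[Y, N] = -2N$ for every $N \in \s \cup \tau$. Therefore $e^{tY}$ merely rescales each such $N$, and in particular normalizes both cones, so $Y$ lies in the Lie algebra of $\bfZ := \bfZ_\s \cap \bfZ_\tau$.

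Set $\bfU := \bfU_\s \cap \bfU_\tau = \bfU_W \cap \bfZ$; this is the unipotent radical of $\bfZ$ (it is clearly a normal unipotent subgroup, and conversely the quotient $\bfZ / (\bfU_W \cap \bfZ)$ embeds into the reductive Levi $\bfP_W / \bfU_W$, so is reductive). Consider the affine $\bQ$-subscheme
\[
  T \ = \ \bigl\{\, Y' \in \fz \ \big|\ Y' \text{ is a grading of } W \,\bigr\} \,,
\]
where by a grading of $W$ we mean a semisimple endomorphism of $V$ whose $k$-eigenspace maps isomorphically onto $\tGr^W_k$. The normalization of \S2.2 places $Y$ in $T(\bR)$. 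I will argue that $T$ is a $\bfU$-torsor under the adjoint action. Indeed, the difference of two gradings of $W$ lying in $\fz$ acts trivially on each $\tGr^W_k$, hence lies in $\fu_W \cap \fz = \fu$. Moreover $\tad_Y$ acts on $\fu \subset \fu_W = \bigoplus_{p+q<0}\fg^{p,q}$ by the nonzero scalars $p+q$, so $\tad_Y|_\fu$ is a linear isomorphism. Consequently the orbit map $\bfU \to T$, $g \mapsto \tAd_g Y$, is an isomorphism of $\bfU$-schemes.

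Since $\bfU$ is unipotent over $\bQ$ and $\mathrm{char}\,\bQ = 0$, every $\bfU$-torsor over $\bQ$ is trivial (equivalently, $H^1(\tSpec\,\bQ, \bfU) = 0$), so $T(\bQ) \neq \emptyset$; pick any $Y_\bQ \in T(\bQ)$. Because $\bfU(\bR)$ acts simply transitively on $T(\bR)$ and $Y, Y_\bQ \in T(\bR)$, there is a unique $g \in U_\s \cap U_\tau$ with $\tAd_g Y_\bQ = Y$; then $\tAd_{g^{-1}} Y = Y_\bQ \in \fg_\bQ$, completing the proof. The principal technical points are the identification of the unipotent radical of $\bfZ$ and the verification of the torsor structure; the cohomological vanishing is a standard fact about unipotent algebraic groups in characteristic zero.
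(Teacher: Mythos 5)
Your proof is a genuinely different route to the same endpoint. The paper invokes Kostant--Schmid (via Kerr--Pearlstein, \S1.3) to identify the orbit of $Y$ under $U_\s \cap U_\tau$ with the explicit set $\cY = \bigcap_{M \in \s\cup\tau}\{Y' \in I_\bR : \tad_M(Y') = 2M\}$, where $I = \bigcap_M \tim(\tad_M)$; this is manifestly the real solution set of a $\bQ$-linear inhomogeneous system, and the lemma then follows by elementary linear algebra. You instead characterize the orbit as the set $T$ of gradings of $W$ in $\fz$ and argue that $T$ is a $\bQ$-form of a $\bfU$-torsor, finishing via $H^1(\bQ,\bfU)=0$ (correct for unipotent groups in characteristic zero). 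Both routes exhibit the orbit as a $\bQ$-defined set with the property that a real point forces a rational one; yours avoids the Kostant--Schmid citation but must establish the orbit description directly, and there your argument has a gap.

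The gap: you show $T \subset Y + \fu$ and that $\tad_Y|_\fu$ is a linear isomorphism, and then conclude ``the orbit map $\bfU \to T$ is an isomorphism.'' Those two facts give injectivity (trivial stabilizer) and that the orbit is \emph{open} in $T$, but not surjectivity onto $T$. To close it, establish the reverse inclusion $Y + \fu \subset T$: for $X \in \fu$, the endomorphism $Y+X$ preserves $W$ and acts by $k$ on $\tGr^W_k$, and it is semisimple because its generalized $k$-eigenspace $V^{(k)}$ satisfies $V^{(k)} \subset W_k$, $V^{(k)} \cap W_{k-1} = 0$, and $(Y+X-k)V^{(k)} \subset V^{(k)} \cap W_{k-1} = 0$. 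Thus $T = Y + \fu$ is irreducible; the orbit is open and (being a unipotent orbit in an affine variety) Zariski-closed in $T$, hence is all of $T$. Once $T = Y + \fu$ is in hand, the cohomological vanishing is in fact superfluous: $T$ is then visibly the solution set of a $\bQ$-linear inhomogeneous system with a real point, so a rational point exists by Gaussian elimination --- exactly the paper's mechanism. Finally, the parenthetical claim that $\bfU = \bfU_W \cap \bfZ$ is the unipotent radical of $\bfZ$ is not needed and not fully justified as written (a subgroup of a reductive group need not be reductive); all you actually use is that $\bfU$ is unipotent, which is immediate from $\bfU \subset \bfU_W$.
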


\noindent From this point on, we specialize to the case that $F = F_\tid$; that is, $Y$ is the endomorphism $V_\bC \to V_\bC$ acting on $V^{p,q}_{W,F_\tid}$ by the eigenvalue $(p+q-\sfn)$.

\begin{corollary}\label{C:rat}
Replacing $F_\tid$ with $g F_\tid$, we may assume that $Y$ is rational.
\end{corollary}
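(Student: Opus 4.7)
The plan is to produce a rational grading $Y_0 \in \fg_\bQ$ that satisfies the same commutation relation $[Y_0, N] = -2N$ for every $N \in \s \cup \tau$ as $Y$ does, and then conjugate $Y$ to $Y_0$ by an element of $U_\s \cap U_\tau$. The key structural observation is that $Y$ already lies in $\fz_\s \cap \fz_\tau$: the relation $[Y,N] = -2N$ says that $\exp(tY)$ scales each element of $\s$ (resp.\ $\tau$), hence preserves the cone. If $Y_0$ satisfies the same relation, then $Y - Y_0$ centralizes every element of $\s \cup \tau$; once we also know $Y - Y_0 \in \fu_W$, this forces $Y - Y_0 \in \fu_\s \cap \fu_\tau$.

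To construct $Y_0$, I will fix any rational splitting of the rational filtration $W$ to produce a rational grading $Y_{00} \in \fg_\bQ$, and then seek $Y_0$ in the form $Y_{00} + X_0$ with $X_0 \in \fu_W \cap \fg_\bQ$. The condition $[Y_0, N] = -2N$ for each generator $N \in \s \cup \tau$ rewrites as the inhomogeneous $\bQ$--linear equation $[X_0, N] = -2N - [Y_{00}, N]$, whose right-hand side is rational. Since $X_0 = Y - Y_{00}$ provides a real solution, elementary linear algebra produces a rational solution and hence the desired $Y_0$. One then checks that $Y - Y_0 \in \fu_W$ (both $Y$ and $Y_0$ reduce to the canonical rational grading on $\tGr^W$) and $[Y - Y_0, N] = 0$ for $N \in \s \cup \tau$; combining these puts $Y - Y_0$ into $\fu_\s \cap \fu_\tau$.

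The final step is to solve $\tAd_{\exp(Z)}(Y) = Y_0$ for $Z \in \fu_\s \cap \fu_\tau$, equivalently
\[
  \sum_{k \ge 1} \tfrac{1}{k!} (\tad Z)^k(Y) \ = \ Y_0 - Y \,.
\]
Here I will use that $\tad Y$ preserves the Lie subalgebra $\fu_\s \cap \fu_\tau$ (since $Y \in \fz_\s \cap \fz_\tau \cap \fp_W$) and is invertible on $\fu_W$ because its eigenvalues there are the negative integers $-1,-2,\ldots$. Decomposing $Z$ into $\tad Y$--eigenspaces and solving recursively from the largest weight downward---using that $\fu_\s \cap \fu_\tau$ is bracket-closed and that the series terminates by nilpotency---yields the required $Z$, so that $g = \exp(Z) \in U_\s \cap U_\tau$ conjugates $Y$ to $Y_0 \in \fg_\bQ$.

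The main obstacle is the first step: producing a rational grading compatible with both cones simultaneously, rather than just with a single nilpotent element via Jacobson--Morozov. A naive rational splitting of $W$ generically fails to put all of $\s$ and $\tau$ into pure weight $-2$; the argument sidesteps this by recasting the requirement as a rational linear system whose solvability over $\bR$, witnessed by $Y$ itself, forces solvability over $\bQ$. Notably, the argument uses no bound on $\tdim\,\s$ or $\tdim\,\tau$, consistent with the absence of a dimension restriction in this section of the paper.
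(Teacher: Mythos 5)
Your proposal is correct and reaches Lemma~\ref{L:rat} (from which the Corollary is immediate) by a genuinely more hands-on route than the paper's.  The paper's proof cites the theorems of Kostant and Schmid (as packaged in \cite[\S1.3]{MR3474815}) to identify the \emph{entire} $U_\s\cap U_\tau$--orbit of $Y$ as the rational affine subspace $\cY=\bigcap_{M}\{Y'\in I_\bR : \tad_M(Y')=2M\}$, and then uses the real-to-rational observation once, directly on $\cY$.  You instead produce a rational candidate $Y_0$ by solving the rational linear system $[X_0,N]=-2N-[Y_{00},N]$, $X_0\in\fu_W\cap\fg_\bQ$ (same real-to-rational trick, with $Y-Y_{00}$ as the real witness), deduce $Y-Y_0\in\fu_\s\cap\fu_\tau$, and then directly construct the conjugating element by solving $\tAd_{\exp Z}(Y)=Y_0$ recursively along $\tad Y$--weight, using invertibility of $\tad Y$ on $\fu_W$ and bracket-closedness of $\fu_\s\cap\fu_\tau$.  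That recursion is exactly where the Kostant--Schmid input is replaced by a direct computation, and it only establishes the ``easy'' inclusion (your $Y_0$ lies in the orbit), which is all that is needed; the paper buys a cleaner proof by outsourcing the full orbit characterization to a citation, while your argument is self-contained.  One small point worth making explicit: you need a rational splitting $Y_{00}$ of $W$ that is $Q$--compatible, i.e.\ $Y_{00}\in\fg_\bQ$ (otherwise $Y-Y_{00}$ does not lie in $\fg$ and your real solution is not a solution of the system as posed over $\fg$).  Such a splitting exists because $W$ is rational and self-dual with respect to $Q$ up to the weight shift, but this deserves a sentence.
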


\begin{proof}[Proof of Lemma \ref{L:rat}]
%Let $\fu_\s$ and $\fu_\tau$ denote the Lie algebras of $\bfU_\s$ and $\bfU_\tau$.  Then $\s \cap \tau \subset \fu_\s \cap \fu_\tau$ implies that $\bfU_\s \cap \bfU_\tau$ is nontrivial.  
Fix $M \in \sigma \cup \tau$.  Let $U_M$ be the unipotent radical of the centralizer $Z_M$ of $M$ in $G$.  Then $U_M = \exp( \fu_M)$ with $\fu_M = \mathrm{ker}(\tad_M) \cap \mathrm{im}(\tad_M))$, \cite[\S1.1 or Theorem 2.9]{MR3474815}.  The group $U_M$ acts simply transitively on the set
\begin{equation}\label{E:kostantY}
  \mathcal{Y}_M = 
  \{ Y' \in \mathrm{im}(\tad_M) \ | \ \tad_M(Y') = 2M \} 
  \ = \ Y' \,+\, \fu_M\,,
\end{equation}
\cite[Corollary 3.5 and Theorem 3.6]{MR0114875}.  The centralizer of $\s\cup\tau$ is 
\[
  Z_{\s \cup \tau} \ = \ 
  \bigcap_{M \in \s \cup \tau} Z_M
  \ = \ 
  Z_\s \,\cap \, Z_\tau  \,,
\]
and the unipotent radical of this group is 
\[
  U_{\sigma \cup \tau} \ = \
  U_\sigma \,\cap\, U_\tau \ = \ 
 \bigcap_{M \in \s \cup \tau} U_M \ = \ 
  \exp(\fu_{\sigma \cup \tau}) \,,
\]
with 
\[
  \fu_{\sigma \cup \tau} \ = \ 
  \bigcap_{M \in \sigma \cup \tau} \fu_M \,.
\]
Kostant's result \eqref{E:kostantY} implies that $U_{\s\cup\tau}$ acts simply transitively on  
\[
  \cY \ = \ \bigcap_{M \in \s \cup \tau}
  \cY_M \ = \ 
  \bigcap_{M \in \s\cup \tau} 
  \{Y' \in \mathrm{im}(\tad_M) \ | \ \tad_M(Y') = 2M \} 
  \ = \ Y' \,+\, \fu_{\sigma \cup \tau}
  \,.
\]
Note that $\cY$ is the set of solutions $x$ to an inhomogeneous system of linear equations $Ax=b$ defined over $\bQ$.  Since there exists a real solution $Y$, there necessarily exists a rational solution.
\end{proof}

% --------------------
\subsection{Normalization III: fundamental sets $\fB_\s \subset B_\s$} %\label{S:fundB}
% --------------------

Let $F_\tid \in B_\s$ and $Y \in \fg_\bQ$ be as in Corollary \ref{C:rat}.  The centralizer 
\[
  \bfL_W \ = \ \{ g \in \bfP_W \ | \ \tAd_g(Y) = Y \}
\]
is a Levi factor, by Corollary \ref{C:rat} and \cite[\S2.2]{MR3505643}.  Moreover, $\s \subset \fg^{-1,-1}$ implies that $[Y,N] = -2N$ for all $N \in \s$, and this implies that 
\[
  \bfL_\s \ = \ \bfZ_\s \,\cap\, \bfL_W
\]
is a Levi factor of $\bfZ_\s$; we have 
\begin{equation}\label{E:UL=Z}
  \bfZ_\s \ = \ \bfU_\s \,\rtimes\, \bfL_\s 
\end{equation}
as $\bQ$--algebraic groups.

\begin{lemma}[{\cite[p.~314]{MR0382272}}]\label{L:uniqueK}
Given $\varphi \in D$ in $G$, there is a unique maximal compact subgroup $K \leq G$ that contains the stabilizer $\tStab_G(\varphi)$ of $\varphi$ in $G$.  The Cartan involution $\theta = \theta_K : G \to G$ corresponding to $K$ is given by conjugation by the Weil operator $\varphi(\bi) \in G$ of $\varphi$.  In particular, if $\fg_\bC = \bigoplus \fg^{p,-p}_\varphi$ is the Hodge decomposition induced by $\varphi$, then $\td\theta$ acts on $\fg^{p,-p}$ by the eigen-value $\bi^{2p} = (-1)^p$.
\end{lemma}

\begin{lemma} \label{L:compatibleK}
Fix $M \in \sigma$ and $F \in B_\s$.  Set $\varphi = e^{\bi M} F \in D$.  Let $K \leq G$ be the unique maximal compact subgroup containing the stabilizer of $\varphi$, and let $\theta : G \to G$ be the associated Cartan involution.  %Let $\varsigma \ni M$ be any nilpotent cone such that $(\varsigma, F_\tid)$ is a nilpotent orbit pair.  
Define $K_W = P_W \cap K$ and $K_\s = Z_\s \cap K$.  Then:
\begin{i_list_emph}
\item \label{i:KW} 
$L_W = P_W \cap \theta(P_W)$ and $K_W = L_W \cap K$ is a maximal compact subgroup of both $L_W \leq P_W$. % \colleen{I think we need to rephrase this: $L_W = P_W \cap \theta(P_W)$ is the definition of $L_W$.  It's not a conclusion (to follow ``then'').  And this Levi factor does depend on the choice of $M$.}
\item \label{i:Ks} 
$L_\s = Z_\s \cap \theta(Z_\s)$ and $K_\s = L_\s \cap K$ is a maximal compact subgroup of both $L_\s \leq Z_\s$. 
\item \label{i:Kind}
$K_\s$ is independent of our choice of $M \in \sigma$.
\item \label{i:S} 
The stabilizer $\tStab_G(W,F) \cap Z_\s = \tStab_{Z_\s}(F)$ of $F \in B_\s$ is contained in $K_\s$. 
\end{i_list_emph}
\end{lemma}

\begin{proof}
Follow Lemma \ref{L:uniqueK}, let 
\begin{equation}\label{E:hs-phi}
  \fg_\bC \ = \ \op\,\fg^{p,-p}_\varphi
\end{equation}
denote the induced Hodge decomposition on the Lie algebra.  These Hodge structures are polarized by $-\kappa$, where $\kappa$ is the Killing form \cite{MR2918237}.  The Hodge structure is equivalent to a homomorphism $\varphi : \bC^* \to G$ of $\bR$-algebraic groups by $\varphi(z)(v) = z^p\overline{z}{}^q v$ for all $z \in \bC^*$ and $v \in V^{p,q}$ \cite{MR2918237}.  The Cartan involution $\theta : G \to G$ of $K$ is conjugation by the Weil operator $\varphi(\bi) \in G$ \cite[p.~314]{MR0382272}.  The induced Cartan involution $\td\theta:\fg \to \fg$ is $\tAd_{\varphi(\bi)}$.  In particular, the Lie algebra of $K$ is 
\[
  \fk \ = \ \fg_\bR \,\cap\, \op\,\fg_\varphi^{2p,-2p} \,.
\]
The Cartan involution satisfies $\tAd_{\varphi(\bi)}(Y) = -Y$ \cite[Remark 4.21]{MR3505643}; this implies $P_W\,\cap\,\theta(P_W) = L_W$ and yields \emph{\ref{i:KW}}.

Recall the Deligne splitting \eqref{SE:ds-g} of $\fg_\bC = \op\,\fg^{p,q}_{W,F_\tid}$.  Since $\s \subset \fg^{-1,-1}_{W,F_\tid}$, the centralizer $\bfZ_\s \leq \bfP_W$ inherits this decomposition
\begin{equation}\label{E:ds-vs}
  \fz_{\s,\bC} \ = \ 
  \bigoplus_{p+q \le 0} \fz^{p,q}_\s  \,,
  \quad \fz^{p,q}_\s \,=\, \fg^{p,q}_{W,F_\tid}\,\cap\,\fz_{\s,\bC}\,.
\end{equation}
The mixed Hodge structure on $\fg$ is polarized by $\tad_M$.  This implies that 
\begin{equation}\label{E:hs-vs}
  \fl_{\s,\bC} \ = \ \op\,\fz_\s^{p,-p}
\end{equation}
is a pure Hodge structure on $\fl_\s$ that is polarized by $-\kappa$.  The associated Hodge filtration $F_\tid^k(\fl_\s) = \op_{p\ge k}\, \fz_\s^{p,-p}$ is invariant under multiplication by $e^{\bi M} \in \bfG(\bC)$.  It follows that \eqref{E:hs-vs} is a sub-Hodge structure of \eqref{E:hs-phi}.  This yields \emph{\ref{i:Ks}}.  It also implies \emph{\ref{i:Kind}} since \eqref{E:ds-vs} depends only on the mixed Hodge structure $(W,F_\tid)$, and so is independent of $M$.  And \emph{\ref{i:S}} follows from \eqref{SE:S}.
\end{proof}

\begin{corollary} \label{C:fund-set}
Let $\fZ_\s \subset Z_\s$ be a fundamental set for $\bfZ_\s(\bZ)$ satisfying $\fZ_\s\,K_\s = \fZ_\s$ \emph{(as in Definition \ref{dfn:fundset}\ref{i:maxcpt})}.  Then $\fB_\s = \fZ_\s \cdot F_\tid \subset B_\s$ is a fundamental set for the action of $\bfZ_\s(\bZ)$.
\end{corollary}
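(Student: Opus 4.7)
The plan is to apply Remark \ref{R:fundset} with $\bfH = \bfZ_\s$, $H = Z_\s$, maximal compact $K = K_\s$, and subgroup $K_0 = S \cap Z_\s$; the corollary then drops out of identifying the quotient $Z_\s/K_0$ with the orbit $Z_\s \cdot F_\tid \subset B_\s$.

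First I would pin down the $Z_\s$-stabilizer of $F_\tid$. Because $W = W(\s)$ is determined by $\s$, \eqref{E:ZinP} gives $\bfZ_\s \subset \bfP_W$, so every element of $Z_\s$ preserves $W$ automatically. Consequently,
\[
  \{ g \in Z_\s \ | \ g F_\tid = F_\tid \}
  \ = \ \{ g \in Z_\s \ | \ g \in \tStab_G(W,F_\tid) \}
  \ = \ S \,\cap\, Z_\s \,,
\]
and the orbit map descends to a $Z_\s$-equivariant isomorphism $Z_\s/(S \cap Z_\s) \simeq Z_\s \cdot F_\tid$, one of the finitely many open-and-closed $Z_\s$-orbits in $B_\s$.

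Next I would verify the hypotheses of Remark \ref{R:fundset}. By Lemma \ref{L:K}\emph{\ref{i:Kvs}}, $K_\s$ is a maximal compact subgroup of $Z_\s$, and by Lemma \ref{L:K}\emph{\ref{i:S}}, the stabilizer $K_0 := S \cap Z_\s$ is contained in $K_\s$, so the quotient $\fZ_\s/K_0$ makes sense. The corollary's hypothesis $\fZ_\s K_\s = \fZ_\s$ is exactly Definition \ref{dfn:fundset}\ref{i:maxcpt} for the pair $(\fZ_\s, K_\s)$, and the remaining defining properties of a fundamental set are built into $\fZ_\s$ by assumption.

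Remark \ref{R:fundset} then produces a fundamental set $\fZ_\s/K_0$ for the action of $\bfZ_\s(\bZ)$ on $Z_\s/K_0$. Transporting this along the orbit isomorphism of the first step identifies $\fZ_\s/K_0$ with $\fZ_\s \cdot F_\tid = \fB_\s$, so $\fB_\s$ is a fundamental set for $\bfZ_\s(\bZ)$ acting on the orbit $Z_\s \cdot F_\tid$. No substantive obstacle arises: the argument is essentially an unpacking of Remark \ref{R:fundset}, with Lemma \ref{L:K}\emph{\ref{i:S}} supplying precisely the containment $K_0 \subset K_\s$ that the remark requires. The only point that warrants mention explicitly in the write-up is that $\fB_\s$ is a fundamental set on the single orbit $Z_\s \cdot F_\tid$ (and not on the full $B_\s$, which may consist of several $Z_\s$-orbits).
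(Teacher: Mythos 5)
Your proof is correct and is exactly the argument the paper leaves implicit (no proof is given for this corollary; it is presented as an immediate consequence of Lemma \ref{L:K} and Remark \ref{R:fundset}): identify the $Z_\s$-stabilizer of $F_\tid$ with $S\cap Z_\s$ using $\bfZ_\s\subset\bfP_W$, check via Lemma \ref{L:K}\emph{\ref{i:Kvs}} and \emph{\ref{i:S}} that this stabilizer sits inside the maximal compact $K_\s$, and then apply Remark \ref{R:fundset} to $\bfH=\bfZ_\s$, $K_0=S\cap Z_\s$. Your closing caveat is also well taken: since $Z_\s$ may act on $B_\s$ with several open-and-closed orbits, what Remark \ref{R:fundset} actually delivers is a fundamental set for the $\bfZ_\s(\bZ)$-action on the single orbit $Z_\s\cdot F_\tid$, and the corollary's phrase ``fundamental set for the action of $\bfZ_\s(\bZ)$'' should be read with that understanding (or else $\fB_\s$ should be enlarged to include translates of fundamental sets from each orbit, which does not affect the downstream finiteness argument).
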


Our final normalization is the following.

\begin{lemma} \label{L:norm}
Fix fundamental sets $\fB_\s \subset B_\s$ and $\fB_\tau \subset B_\tau$ for the actions of $\bfZ_\s(\bZ)$ and $\bfZ_\tau(\bZ)$, respectively, containing $F_\tid \in \fB_\s \cap \fB_\tau$.  For all $\gamma \in \Gamma_{\s,\tau}$, there exists $\tilde\gamma \in \bfZ_\s(\bZ)\,\gamma\, \bfZ_\tau(\bZ)$ and $F_{\tilde\gamma} \in \fB_\s \cap \tilde\gamma \fB_\tau$ so that both $(\sigma , F_{\tilde\gamma})$ and $(\tau_{\tilde\gamma},F_{\tilde\gamma})$ are nilpotent orbit pairs.
\end{lemma}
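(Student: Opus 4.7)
The approach is to unpack the definition of $\Gamma_{\s,\tau}$, transport the witnessing filtration into the prescribed fundamental sets using the defining property $\bfZ_\s(\bZ)\,\fB_\s = B_\s$ (and analogously for $\tau$), and record the compensating change of $\gamma$ as the element $\tilde\gamma$.

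First I would choose any $F_\gamma \in \check D$ such that $(\s,F_\gamma)$ and $(\tau_\gamma,F_\gamma)$ are nilpotent orbit pairs.  After the $\bR$--split normalization at the end of \S\ref{S:fundsets} (Normalization I, citing \cite{MR840721}) one may assume that the mixed Hodge structure $(W[-n],F_\gamma)$ is $\bR$--split, so $F_\gamma \in B_\s$.  Applying $\gamma^{-1} \in G$ and invoking $W(\tau) = \gamma^{-1} W(\s)$ from Normalization I, the pair $(W(\tau)[-n],\gamma^{-1}F_\gamma)$ is likewise $\bR$--split and $(\tau,\gamma^{-1}F_\gamma)$ is a nilpotent orbit pair, so $\gamma^{-1}F_\gamma \in B_\tau$.

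Since $\fB_\s \subset B_\s$ and $\fB_\tau \subset B_\tau$ are fundamental sets for the actions of $\bfZ_\s(\bZ)$ and $\bfZ_\tau(\bZ)$ (Remark \ref{R:fundset} together with the Corollary preceding the lemma), one may write $F_\gamma = g_\s F'$ and $\gamma^{-1}F_\gamma = g_\tau F''$ for some $g_\s \in \bfZ_\s(\bZ)$, $g_\tau \in \bfZ_\tau(\bZ)$, $F' \in \fB_\s$, $F'' \in \fB_\tau$.  I would then set
\[
  \tilde\gamma \ = \ g_\s^{-1}\gamma\, g_\tau
  \ \in \ \bfZ_\s(\bZ)\,\gamma\,\bfZ_\tau(\bZ) \,,
  \qquad F_{\tilde\gamma} \ = \ F' \,.
\]
The identity $\tilde\gamma F'' = g_\s^{-1}\gamma g_\tau F'' = g_\s^{-1}F_\gamma = F'$ immediately gives $F_{\tilde\gamma} \in \fB_\s \cap \tilde\gamma\fB_\tau$.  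The nilpotent orbit pair property is equivariant under the $\bfZ_\s$- and $\bfZ_\tau$-actions: applying $g_\s^{-1} \in \bfZ_\s$ to $(\s,F_\gamma)$ produces $(\s,F_{\tilde\gamma})$, and the identity $\tAd_{\tilde\gamma}(\tau) = \tAd_{g_\s^{-1}}(\tau_\gamma)$ (which uses $\tAd_{g_\tau}(\tau) = \tau$) shows that $g_\s^{-1}$ carries $(\tau_\gamma,F_\gamma)$ to $(\tau_{\tilde\gamma},F_{\tilde\gamma})$; so both remain nilpotent orbit pairs.

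There is no serious obstacle here beyond the careful normalizations already performed in \S\ref{S:fundsets}: the lemma is essentially a translation of the abstract existence of $F_\gamma$ into the concrete fundamental-set language required for the proof of Theorem \ref{T:doublecoset}, where the finiteness clause of Definition \ref{dfn:fundset}\emph{(iii)} will be applied to bound $\pi(\Gamma_{\s,\tau})$ via intersections of the form $\fB_\s \cap \tilde\gamma\fB_\tau$.  The one point to treat with care is the verification that $F_\gamma \in \bfZ_\s(\bZ)\,\fB_\s$ (and similarly for $\tau$), which amounts to knowing that $\fB_\s$ is a fundamental set for the full set $B_\s$ and not merely a single $Z_\s$-orbit; this is exactly the content of the Corollary preceding the lemma, so no additional input is required.
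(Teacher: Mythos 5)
Your argument is correct and follows essentially the same route as the paper: move the witnessing filtration into $\fB_\s$ and $\fB_\tau$ using the defining property of the fundamental sets and absorb the compensating integer elements of $\bfZ_\s(\bZ)$ and $\bfZ_\tau(\bZ)$ into $\gamma$, producing $\tilde\gamma=g_\s^{-1}\gamma g_\tau$ (the paper carries out the same two adjustments in place, writing $\tilde\gamma=\z_\gamma\,\gamma\,\x_\gamma^{-1}$). One small caution: the Corollary preceding the lemma produces $\fB_\s=\fZ_\s\cdot F_\tid$, which a priori covers only the single $Z_\s$-orbit of $F_\tid$ rather than all of $B_\s$; both your write-up and the paper implicitly use that the normalized $F_\gamma$ lies in that same orbit (the paper signals this with the intermediate real element $g_\gamma\in\bfZ_\tau(\bR)$), so it would be safer to acknowledge this point rather than attribute it wholesale to the Corollary.
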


\begin{proof}
Without loss of generality $F_\gamma \in \fB_\s$.  Else choose $\z_\gamma \in \bfZ_\s(\bZ)$ so that $\z_\gamma F_\gamma \in \fB_\s$.  (Take $\z_\tid = \tid$.)  Then replace $F_\gamma$ with $\z_\gamma F_\gamma$, and $\gamma$ with $\z_\gamma \,\gamma$.  

Next, note that $\gamma \fB_\tau \ni \gamma  F_\tid$ is a fundamental set of $B_{\tau_\gamma} = \gamma B_\tau$ for the action of $\bfZ_{\tau_\gamma}(\bZ) = \gamma \bfZ_\tau(\bZ) \gamma^{-1}$.  Both $F_\gamma , \gamma F_\tid \in B_{\tau_\gamma}$.  So there exists $g_\gamma \in \bfZ_\tau(\bR)$ so that $F_\gamma = \gamma g_\gamma F_\tid$.  (Take $g_\tid = \tid$.)  And there exists $\x_\gamma \in \bfZ_\tau(\bZ)$ so that $\x_\gamma g_\gamma F_\tid \in \fB_\tau$.  (Take $\x_\tid = \tid$.)  Then $\gamma \x_\gamma\gamma ^{-1} F_\gamma = \gamma \x_\gamma g_\gamma F_\tid \in \gamma \fB_\tau$.  So that $F_\gamma \in \gamma \x_\gamma^{-1} \fB_\tau$.  Replace $\gamma$ with $\gamma \x_\gamma^{-1}$.
\end{proof}

\begin{remark} \label{R:norm}
The upshot of the proof of Lemma \ref{L:norm} is that $\gamma$ has been replaced by $\z_\gamma \gamma \gamma_o^{-1}\x_\gamma^{-1}$ with $\z_\gamma \in \bfZ_\s(\bZ)$ and $\x_\gamma \in \bfZ_\tau(\bZ)$.  Note that:
\begin{i_list}
\item 
The intersection $\s \cap \tau_\gamma = \s \cap \tAd_\gamma(\tau)$ is invariant under these normalizations.
\item
The double coset $[\gamma] \in \bfZ_\s(\bZ) \bs \bfG(\bZ) / \bfZ_\tau(\bZ)$ is preserved by these normalizations.
\end{i_list}
\end{remark}

% --------------------
\subsection{Proof of Theorem \ref{T:doublecoset}} %\label{S:prf-coset}
% --------------------

Given rational nilpotent cones $\s = \tspan_{\bQ_{>0}}\{N_1,\ldots,N_k\}$ and $\t = \tspan_{\bQ_{>0}}\{N'_1,\ldots,N_l'\}$ (as in Definition \ref{dfn:nilpotentorbit}\ref{i:cone}), define
\[
  \mathfrak{H}_\s^+ \ = \ \left\{\left.\textstyle{\sum_{j=1}^k}\,y_jN_j \ \right| \ y_j \geq 1 \right\}, \
   \mathfrak{H}_\t^+ \ = \ \left\{\left.\textstyle{\sum_{j=1}^l}\,y_jN_j' \ \right| \ y_j \geq 1 \right\}
\]
Moreover, as Lemma \ref{L:norm} we may assume there exists $M\in \mathfrak{H}_\s^+\cap \mathfrak{H}_\t^+$. We set $\varphi=\exp(\bi M)\cdot F_\mathrm{id}$.
\begin{theorem}\label{T:BKTplus}
There exists a finite cover $\mathfrak{H}_\s^+=\bigcup_{1\leq k\leq K}R_k$ such that for each $R_k$, there exists a fundamental set $\fB_{\s,k}\subset B_\s$ as in Lemma \ref{L:norm}, so that the set $\exp(\bi R_k)\cdot\fB_{\s,k}$ is contained in a $\bfP_W(\bZ)$-fundamental set of $P_W\cdot \varphi$.
\end{theorem}

\noindent Assuming Theorem \ref{T:BKTplus} (which is proved in \S\ref{S:prf-BKT+}), we complete the proof of Theorem \ref{T:doublecoset}:  The objective is to show that the image of $\pi(\Gamma_{\s,\tau}) \subset \bfZ_\s(\bZ) \bs \bfG(\bZ) / \bfZ_\tau(\bZ)$ is finite. By \eqref{E:norm} we know $\pi(\Gamma_{\s,\tau}) \subset \bfZ_\s(\bZ) \bs \bfP_W(\bZ) / \bfZ_\tau(\bZ)$. Given $\gamma \in \Gamma_{\s,\tau}$,  Lemma \ref{L:norm} yields $\tilde\gamma \in \bfZ_\s(\bZ)\,\gamma\, \bfZ_\tau(\bZ)$ and $F_{\tilde\gamma} \in \fB_\s \cap \tilde\gamma \fB_\tau$ so that both $(\sigma , F_{\tilde\gamma})$ and $(\tau_{\tilde\gamma},F_{\tilde\gamma})$ are nilpotent orbit pairs.  By Remark \ref{R:norm}, $\s \cap \tau_{\tilde\gamma} = \s \cap \tau_\gamma \not= \emptyset$.  In particular, there exist regions $R_{\s,k}\subset \mathfrak{H}_\s^+, R_{\t,l}\subset \mathfrak{H}_\tau^+$ and corresponding fundamental sets $\fB_{\s,k}\subset B_\s, \fB_{\tau,l}\subset B_\tau$, such that there exists $M\in R_k\cap \mathrm{Ad}_{\tilde{\gamma}}R_l$ and
\[
  \exp(\bi M)\cdot F_{\tilde{\gamma}} \ \subset \ \exp(\bi R_{\s,k})\cdot\fB_{\s,k} \,\cap\, \tilde{\gamma}\exp(\bi R_{\t,l})\cdot\fB_{\t,l} \ \neq \ \emptyset \,.
\]
Theorem \ref{T:BKTplus} thus implies there are only finitely many such $\tilde{\gamma}\in \bfP_W(\bZ)$ (Remark \ref{R:fundset}) for a fixed pair of regions $(R_{\s,k}\subset \mathfrak{H}_\s^+, R_{\t,l}\subset \mathfrak{H}_\tau^+)$.  Running over all (finitely many) such pairs of regions, we conclude the image $\pi(\Gamma_{\s,\tau}) \subset \bfZ_\s(\bZ) \bs \bfG(\bZ) / \bfZ_\tau(\bZ)$ must be finite.  \hfill \qed

\section{Constructing weak fan by finite subdivisions}\label{S:weakfan}
% --------------------

We now turn to the proof of Theorem \ref{T:main3}, as outlined in Remark \ref{R:main3}.  We begin with some properties of the cardinality $|\cI_{\s,\tau}| < \infty$ (Corollary \ref{C:Ifinite}), that will be needed for the finite subdivision to a weak fan (in \S\ref{S:finite-subdiv}).

% --------------------
\subsection{Counting intersections}
% --------------------

The cardinality $|\cI_{\s,\tau}|$ depends only on the $\Gamma$--conjugacy classes of $\s$ and $\tau$.  

\begin{lemma} \label{L:card}
Given rational nilpotent cones $\s,\tau$, we have $|\cI_{\s,\tau}| = |\cI_{\tAd_\gamma(\s),\tAd_{\gamma'}(\tau)}|$ for all $\gamma,\gamma' \in \Gamma$.
\end{lemma}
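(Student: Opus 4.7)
The plan is to exhibit an explicit bijection between $\cI_{\s,\tau}$ and $\cI_{\s',\tau'}$, where $\s' = \tAd_{\gamma_1}(\s)$ and $\tau' = \tAd_{\gamma_2}(\tau)$. The natural candidate is the map $A \mapsto \tAd_{\gamma_1}(A)$, since conjugating $\s$ on the left by $\gamma_1$ ought to conjugate each intersection $\s \cap \tAd_\gamma(\tau)$ by $\gamma_1$ as well. To make this precise I will first construct a bijection at the level of $\Gamma$-elements and then check that it descends to the intersection data.

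First I would define $f : \Gamma \to \Gamma$ by $f(\gamma) = \gamma_1 \gamma \gamma_2^{-1}$, which is clearly a bijection with inverse $f^{-1}(\tilde\gamma) = \gamma_1^{-1} \tilde\gamma \gamma_2$. The key computation is the identity
\[
  \s' \cap \tAd_{f(\gamma)}(\tau') \ = \
  \tAd_{\gamma_1}(\s) \cap \tAd_{\gamma_1 \gamma}(\tau) \ = \
  \tAd_{\gamma_1}\bigl(\s \cap \tAd_\gamma(\tau)\bigr)\,,
\]
which shows that $\s \cap \tAd_\gamma(\tau) \not= \emptyset$ if and only if $\s' \cap \tAd_{f(\gamma)}(\tau') \not= \emptyset$.

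Next I would verify that $f$ restricts to a bijection $\Gamma_{\s,\tau} \to \Gamma_{\s',\tau'}$. If $(\s,F_\gamma)$ and $(\tAd_\gamma(\tau),F_\gamma)$ are nilpotent orbit pairs, set $F_{f(\gamma)} = \gamma_1 F_\gamma \in \check D$. Because $\gamma_1 \in \Gamma \subset \bfG(\bR)$ acts on $\check D$ by a biholomorphism carrying nilpotent orbit pairs $(\r,F)$ to $(\tAd_{\gamma_1}(\r), \gamma_1 F)$ (the horizontality, nilpotency and polarization conditions of Definition \ref{dfn:nilpotentorbit} are all $\bfG(\bR)$-equivariant), the pairs $(\s', F_{f(\gamma)}) = (\tAd_{\gamma_1}(\s),\gamma_1 F_\gamma)$ and $(\tAd_{f(\gamma)}(\tau'), F_{f(\gamma)}) = (\tAd_{\gamma_1\gamma}(\tau), \gamma_1 F_\gamma)$ are both nilpotent orbit pairs. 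Hence $f(\Gamma_{\s,\tau}) \subset \Gamma_{\s',\tau'}$; the reverse inclusion follows symmetrically from $f^{-1}$.

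Finally, since $\tAd_{\gamma_1}$ is injective on the collection of rational nilpotent cones, the assignment $A \mapsto \tAd_{\gamma_1}(A)$ is a well-defined injection $\cI_{\s,\tau} \to \cI_{\s',\tau'}$ by the displayed identity, and its inverse $A \mapsto \tAd_{\gamma_1^{-1}}(A)$ is similarly defined using $f^{-1}$. This yields the bijection and the cardinality equality. There is no real obstacle here; the argument is a direct equivariance check, with the only bookkeeping being to confirm that conjugation by $\gamma_1$ (respectively $\gamma_2$) preserves all the data entering the definitions of $\Gamma_{\s,\tau}$ and $\cI_{\s,\tau}$.
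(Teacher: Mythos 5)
Your proof is correct, and it arrives at the same conclusion by a slightly different organization of the same underlying equivariance observation. The paper factors the argument through Lemma \ref{L:symmetries}, which isolates two atomic symmetries: (i) inversion $\gamma \mapsto \gamma^{-1}$ gives a bijection $\Gamma_{\s,\tau} \to \Gamma_{\tau,\s}$ (and $\cI_{\s,\tau} \to \cI_{\tau,\s}$ via $\tAd_{\gamma}^{-1}$), and (ii) right translation $\gamma \mapsto \gamma\b$ gives a bijection $\Gamma_{\s,\tAd_\b(\tau)} \to \Gamma_{\s,\tau}$ with $\cI_{\s,\tau} = \cI_{\s,\tAd_\b(\tau)}$ as an actual equality. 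The paper then composes these four times ((ii), (i), (ii), (i)) to get Lemma \ref{L:card}. You instead build the composite bijection $f(\gamma) = \gamma_1\gamma\gamma_2^{-1}$ directly and verify in one go that it respects both the nonempty-intersection condition and the existence of a common $F_\gamma$, inducing $A \mapsto \tAd_{\gamma_1}(A)$ on $\cI$. Both are correct elementary checks; the reason the paper chooses the modular decomposition is that it reuses Lemma \ref{L:symmetries}\emph{(i)} independently in the proofs of Lemmas \ref{L:mod1} and \ref{L:wf}, so the two symmetries earn their keep separately. Your single-bijection version is marginally more direct if one only cares about Lemma \ref{L:card}, but would require re-proving part of it later in the paper's development.
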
  

\noindent Lemma \ref{L:card} is a consequence of the Lemma \ref{L:symmetries}.  The latter may be verified directly from the definition \eqref{SE:Ist} of $\Gamma_{\s,\tau}$.

\begin{lemma}\label{L:symmetries}
\begin{i_list_emph}
\item \label{i:s1}
The map $\gamma \mapsto \gamma^{-1}$ defines a bijection $\Gamma_{\s,\tau} \to \Gamma_{\tau,\s}$.  And the map $\s \cap \tau_\gamma \mapsto \tau \cap \s_{\gamma^{-1}} = \tAd_\gamma^{-1}(\s\cap\tau_\gamma)$ is a bijection $\cI_{\s,\tau} \to \cI_{\tau,\s}$.
\item \label{i:s2}
If $\b \in \Gamma$, then $\gamma \mapsto \gamma \b$ defines a bijection $\Gamma_{\s,\tAd_\b(\tau)} \to \Gamma_{\s,\tau}$.  We have $\cI_{\s,\tau} = \cI_{\s,\tAd_\b(\tau)}$ for all $\b \in \Gamma$.
\end{i_list_emph}
\end{lemma}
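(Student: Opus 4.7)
The plan is to verify both statements directly from the definition of $\Gamma_{\s,\tau}$. The only substantive ingredient is an \emph{equivariance principle}: for any $g \in \bfG(\bR)$, the pair $(\s, F)$ is a nilpotent orbit pair if and only if $(\tAd_g(\s), gF)$ is. This follows immediately from Definition \ref{dfn:nilpotentorbit}, since the condition $N_j(F^p)\subset F^{p-1}$ is covariant under $\bfG$, and the identity $\exp\bigl(\sum z_j \tAd_g(N_j)\bigr)(gF) = g\cdot\exp\bigl(\sum z_j N_j\bigr)F$ together with $g \in \bfG(\bR)$ preserves membership in $D$.

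For \emph{\ref{i:s1}}, given $\gamma \in \Gamma_{\s,\tau}$ with witness $F_\gamma$, I would set $F' := \gamma^{-1} F_\gamma$ and invoke equivariance to conclude that both $(\s_{\gamma^{-1}}, F')$ and $(\tau, F')$ are nilpotent orbit pairs. Conjugating by $\gamma^{-1}$ converts $\s \cap \tau_\gamma \neq \emptyset$ into $\s_{\gamma^{-1}} \cap \tau \neq \emptyset$, so $\gamma^{-1} \in \Gamma_{\tau,\s}$. Involutivity of inversion yields the desired bijection $\Gamma_{\s,\tau}\to\Gamma_{\tau,\s}$, and the identity $\tAd_{\gamma^{-1}}(\s \cap \tau_\gamma) = \s_{\gamma^{-1}} \cap \tau$ delivers the induced bijection $\cI_{\s,\tau} \to \cI_{\tau,\s}$.

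For \emph{\ref{i:s2}}, the cocycle identity $\tAd_{\gamma\b} = \tAd_\gamma \circ \tAd_\b$ does all the work: it translates the condition ``$\gamma \in \Gamma_{\s,\tAd_\b(\tau)}$ with witness $F_\gamma$'' into ``$\gamma\b \in \Gamma_{\s,\tau}$ with witness $F_\gamma$'', using the same $F_\gamma$ on both sides. The map $\gamma \mapsto \gamma\b$ is then a bijection by right translation in $\Gamma$, and the same identity gives $\s \cap \tAd_\gamma(\tAd_\b(\tau)) = \s \cap \tAd_{\gamma\b}(\tau)$, so the two collections $\cI_{\s,\tAd_\b(\tau)}$ and $\cI_{\s,\tau}$ coincide as subsets of the set of rational nilpotent cones.

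No step here will pose a real obstacle; the entire argument is definitional bookkeeping anchored on $\bfG$-equivariance of the nilpotent-orbit condition and the cocycle property of $\tAd$. The only point requiring care is tracking the witness filtration $F_\gamma$ under the relevant conjugation or translation, which is why I would make the choice $F' = \gamma^{-1} F_\gamma$ explicit in part \emph{\ref{i:s1}} rather than leaving the witness unnamed.
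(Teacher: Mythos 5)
The paper offers no proof of this lemma---it says only that it ``may be verified directly from the definition of $\Gamma_{\s,\tau}$''---and your direct verification via $\bfG$-equivariance of the nilpotent-orbit condition and the cocycle property of $\tAd$ is exactly the intended argument. One cosmetic caveat: your equivariance principle should be stated for $g \in \Gamma$ (or $\bfG(\bQ)$) rather than $g \in \bfG(\bR)$, since $\tAd_g$ for a general real $g$ need not carry a rational nilpotent cone to a rational one; as you only ever apply it with $g = \gamma^{\pm 1} \in \Gamma$, the argument is unaffected.
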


\noindent The last property of the cardinality that we will make use of is

\begin{lemma}\label{L:containment}
Suppose that $\s_1,\s_2$ are rational nilpotent cones.  If $\s_1 \subset \s_2$ and $\tdim\,\s_1 = \tdim\,\s_2$, then $\bfZ_{\s_1} = \bfZ_{\s_2}$.  If both cones can be completed to nilpotent orbit pairs $(\s_j,F_j)$, then $\Gamma_{\sigma_1,\tau} \subset \Gamma_{\sigma_2,\tau}$ for any rational nilpotent cone $\tau$, and $\pi(\Gamma_{\sigma_1,\tau}) \subset \pi(\Gamma_{\sigma_2,\tau})$.  The map $\cI_{\s_1,\tau} \to \cI_{\s_2,\tau}$ sending $\s_1 \cap \tau_\gamma \mapsto \s_2 \cap \tau_\gamma$ is injective.
\end{lemma}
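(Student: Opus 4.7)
The plan is to dispatch the three assertions in turn, using (a) to simplify the projection statement in (b) and a short set-theoretic identity for (c).

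For claim (a), interpret $\bfZ_\s$ as the pointwise centralizer $\bigcap_{N\in \s}\bfZ_N$. Since the condition $[g,N] = 0$ is linear in $N$, this coincides with the centralizer of the $\bQ$-linear span of $\s$. The hypothesis $\s_1 \subset \s_2$ with $\tdim\,\s_1 = \tdim\,\s_2$ forces $\tspan_\bQ(\s_1) = \tspan_\bQ(\s_2)$, so $\bfZ_{\s_1} = \bfZ_{\s_2}$.

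For claim (b), fix $\gamma \in \Gamma_{\s_1,\tau}$ and pick $F \in \check D$ such that $(\s_1,F)$ and $(\tau_\gamma,F)$ are both nilpotent orbit pairs. The intersection $\s_2 \cap \tau_\gamma \supset \s_1 \cap \tau_\gamma \ne \emptyset$ is clear. To produce $F'$ making $(\s_2, F')$ and $(\tau_\gamma, F')$ simultaneously nilpotent orbit pairs, I propose $F' = F$ and verify the nilpotent orbit conditions for $(\s_2, F)$. By the weight-filtration independence of Cattani--Kaplan \cite{MR664326, MR590823}, since each of $\s_1, \s_2, \tau_\gamma$ admits a nilpotent orbit completion, they share a common weight filtration $W$. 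The conditions $N(F^p) \subset F^{p-1}$ and that $(W[-n],F)$ be a mixed Hodge structure are inherited from $(\s_1, F)$: they depend only on $W$ and on $F$, and extend linearly in $N$ across the common span. The remaining positivity -- polarization by some $N$ in the interior of $\s_2$ in the Cattani--Kaplan--Schmid sense -- follows by a comparison of polarization cones: the set $C(F) \subset \fg_\bR$ of nilpotents polarizing $(W[-n], F)$ is an open convex cone containing $\s_1^\circ$; invoking the hypothesis that $(\s_2, F_2)$ is a nilpotent orbit pair (so $\s_2^\circ \subset C(F_2)$), one argues $\s_2^\circ \subset C(F)$. The projection containment $\pi(\Gamma_{\s_1,\tau}) \subset \pi(\Gamma_{\s_2,\tau})$ is then immediate from claim (a), as the two projections $\pi$ literally coincide.

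For claim (c), the set-theoretic identity $\s_1 \cap \tau_\gamma = \s_1 \cap (\s_2 \cap \tau_\gamma)$ shows that $\s_2 \cap \tau_\gamma$ determines $\s_1 \cap \tau_\gamma$, giving both well-definedness and injectivity of the map $\cI_{\s_1,\tau} \to \cI_{\s_2,\tau}$.

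The main obstacle is the polarization-cone comparison in claim (b): transferring polarization from $\s_1^\circ$ to $\s_2^\circ$ for the same filtration $F$, leveraging only the hypothesis that $\s_2$ admits some polarizing filtration $F_2$. This step will invoke the finer Cattani--Kaplan--Schmid polarization theory together with the $\bR$-split normalizations developed in \S\ref{S:fundsets}--\S\ref{S:fundB}.
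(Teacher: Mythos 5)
Your parts (a) and (c) are correct; (a) matches the paper's (unstated) reasoning that $\bfZ_\s$ is the pointwise centralizer of $\tspan_\bQ(\s)$, and (c) is a cleaner repackaging of the paper's element-chasing argument (the paper picks an explicit $N \in \s_1 \cap \tau_\gamma \setminus \tau_{\gamma'}$; your identity $\s_1 \cap \tau_\gamma = \s_1 \cap (\s_2 \cap \tau_\gamma)$ does the same work in one line). One small caveat in (c): that identity gives injectivity of $\s_1\cap\tau_\gamma \mapsto \s_2\cap\tau_\gamma$ but not well-definedness in the usual sense -- it shows the reverse assignment $\s_2\cap\tau_\gamma \mapsto \s_1\cap\tau_\gamma$ is well-defined, which is what is actually used downstream, so this is harmless (and the paper's phrasing has the same ambiguity).

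The real issue is part (b), and specifically the step you yourself flag as the main obstacle. The paper simply asserts that the existence of completions $(\s_j,F_j)$ forces ``$(\s_1,F)$ is a nilpotent orbit pair $\Leftrightarrow$ $(\s_2,F)$ is a nilpotent orbit pair,'' treating it as known Cattani--Kaplan--Schmid theory. Your proposed route -- compare the polarization cone $C(F)$ (containing $\s_1^\circ$) with $C(F_2)$ (containing $\s_2^\circ$) to conclude $\s_2^\circ \subset C(F)$ -- is not the right tool: openness and convexity of $C(F)$ together with $\s_1^\circ \subset C(F)$ give no control over whether the strictly larger cone $\s_2^\circ$ sits inside $C(F)$, and there is no mechanism for transporting information from $C(F_2)$ to $C(F)$ when $F \ne F_2$. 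The correct and much shorter argument uses the interior-point characterization from \cite{MR590823,MR664326,MR840721}: given that $W(\s)$ is constant on $\s^\circ$, that $(W[-n],F)$ is a mixed Hodge structure, and that the horizontality $N(F^p)\subset F^{p-1}$ holds for $N$ in the span, the nilpotent orbit condition is equivalent to the existence of a \emph{single} $N\in\s^\circ$ polarizing $(W[-n],F)$ (equivalently, every $N\in\s^\circ$ does). Now argue as follows. Since $(\s_2,F_2)$ is a nilpotent orbit pair, $W(\s_2)$ is constant on $\s_2^\circ$; since $\s_1^\circ\subset\s_2^\circ$ (same span by (a)), $W(\s_1)=W(\s_2)=W$. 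Horizontality for $\s_2$ is linear in $N$ and hence determined by the common span. Finally, $(\s_1,F)$ a nilpotent orbit pair gives some $N\in\s_1^\circ\subset\s_2^\circ$ polarizing $(W[-n],F)$, so $(\s_2,F)$ is a nilpotent orbit pair. The converse direction is the same statement with $\s_1^\circ\subset\s_2^\circ$ used in the other role. This replaces the ``polarization-cone comparison'' you could not close and makes the iff, hence $\Gamma_{\s_1,\tau}\subset\Gamma_{\s_2,\tau}$, immediate.
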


\noindent The proof of Lemma \ref{L:containment} will make use of the following result.

\begin{lemma}\label{L:eqdim}
Suppose that $\s_1,\s_2$ are rational nilpotent cones that can be completed to nilpotent orbit pairs $(\s_1,F_1)$ and $(\s_2,F_2)$, respectively.  If $\s_1 \cap \s_2 \not=\emptyset$ is open in both $\s_1$ and $\s_2$, then $B_{\s_1} = B_{\s_2}$.
\end{lemma}

\begin{proof}[Proof Lemma \ref{L:containment}]
The assertion $\bfZ_{\s_1} = \bfZ_{\s_2}$ is immediate.  Next the existence of nilpotent orbit pairs $(\s_j,F_j)$ implies that $(\s_1,F)$ is a nilpotent orbit pair if and only if $(\s_2,F)$ is a nilpotent orbit pair (Lemma \ref{L:eqdim}). Thus $\Gamma_{\sigma_1,\tau} \subset \Gamma_{\sigma_2,\tau}$, and $\pi(\Gamma_{\sigma_1,\tau}) \subset \pi(\Gamma_{\sigma_2,\tau})$ follows.

For the injectivity statement, assume $\gamma,\gamma' \in \Gamma_{\s_1,\tau}$ and that $\s_1 \cap \tau_\gamma \not = \s_1 \cap \tau_{\gamma'} \in \cI_{\s_1,\tau}$.  Then, without loss of generality, there exists $N \in \s_1 \cap \tau_\gamma$ so that $N \not \in \s_1 \cap \tau_{\gamma'}$.  Since $\s_1 \subset \s_2$ this implies $N \in \s_2 \cap \tau_\gamma$, but $N \not\in \tau_{\gamma'}$.  Thus $\s_2 \cap \tau_\gamma \not = \s_2 \cap \tau_{\gamma'}$, and the map is injective.
\end{proof}

\noindent The proof of Lemma \ref{L:eqdim} will make use of the following result.

%\begin{lemma} \label{L:eqW}
%Suppose that $\s_1,\s_2$ are rational nilpotent cones that can be completed to nilpotent orbit pairs $(\s_1,F_1)$ and $(\s_2,F_2)$, respectively.  If $W(\s_1) = W(\s_2)$ and $\tspan_\bQ \s_1 = \tspan_\bQ \s_2$, then $B_{\s_1} = B_{\s_2}$.
%\end{lemma}

\begin{proof}[Proof of Lemma \ref{L:eqdim}]
Without loss of generality, both $(\s_j,F_j)$ are $\bR$--split, cf.~\cite[(2.20)]{MR840721}.  Let $\fg_\bC = \oplus\,\fg^{p,q}$ be the Deligne splitting \eqref{SE:ds-g} induced by the nilpotent orbit pair $(\s_1,F_1)$.  A priori $\s_1 \subset \fg^{-1,-1}$.  The hypothesis that $\tspan_\bQ \s_1 = \tspan_\bQ \s_2$ implies that $\s_2 \subset \fg^{-1,-1}$, as well.

Since $\s_1 \cap \s_2 \not= \emptyset$, we necessarily have $W(\s_1) = W(\s_2)$, \cite[Theorem 3.3]{MR664326}.  Set $W = W(\s_j)$.  The real Lie group $\tStab_G(W,F_1)$ stabilizing both the weight filtration and $F_1$ preserves the Deligne splitting; in particular, $\tStab_G(W,F_1)$ acts on $\fg_\bR \cap \fg^{-1,-1}$ (via the adjoint representation). The assumption that $\s_1 \cap \s_2$ is nonempty implies that both cones $\s_1$ and $\s_2$ are contained in the same orbit of this action; cf.~\cite[Lemma 3.5]{MR3751291}, where $M^0_\bR$ denotes the connected identity component of $\tStab_G(W,F_1)$.  This implies that $(\s_2,F_1)$ is also a nilpotent orbit pair; that is $F_1 \in B_{\s_2}$.  This proves that $B_{\s_1} \subset B_{\s_2}$.  The same argument goes though with the roles of $1$ and $2$ swapped.
\end{proof}

\begin{lemma} \label{L:weakfanI}
Let $\Sigma$ be a collection of rational nilpotent cones \emph{(as in Definition \ref{dfn:nilpotentorbit}\ref{i:cone})} with the following properties:
\begin{i_list_emph}
\item
The collection is closed under taking faces: if $\s \in \Sigma$, and $\tau$ is a face of $\s$, then $\tau \in \Sigma$.
\item
The collection is closed under the action of $\Gamma$: if $\sigma \in \Sigma$ and $\gamma \in \Gamma$, then $\tAd_\gamma(\s) \in \Sigma$.
\item
The collection contains only finitely many $\tAd_\Gamma$--orbits. 
\item 
Every cone $\s \in \Sigma$ can be completed to a nilpotent orbit pair $(\s,F)$.
\end{i_list_emph}
Then $\Sigma$ is a weak fan if and only if $|\cI_{\s,\s}|=1$ for all $\s \in \Sigma$, and $|\cI_{\s,\tau}|=0$ for all distinct orbits $\tAd_\Gamma(\s),\tAd_\Gamma(\tau) \subset \Sigma$.
\end{lemma}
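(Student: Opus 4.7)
\textbf{Proof proposal for Lemma \ref{L:weakfanI}.} The plan is to unwind the definitions of $\Gamma_{\s,\tau}$ and $\cI_{\s,\tau}$ and then leverage the two symmetry statements of Lemma \ref{L:symmetries} to reduce the ``if'' direction to the self-intersection case $\cI_{\s,\s}$. I expect no serious obstacle here: once the observation is made that the element $\s$ itself always lies in $\cI_{\s,\s}$, both directions follow immediately from the weak fan definition.

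For the forward implication, assume $\Sigma$ is a weak fan. Note first that since every $\s \in \Sigma$ completes to a nilpotent orbit pair, we have $\tid \in \Gamma_{\s,\s}$ and hence $\s = \s \cap \s \in \cI_{\s,\s}$. Now take any $\gamma \in \Gamma_{\s,\s}$. By definition of $\Gamma_{\s,\s}$, the cones $\s$ and $\tAd_\gamma(\s)$ share a common $F$ and intersect nontrivially; by $\Gamma$--closure $\tAd_\gamma(\s) \in \Sigma$; so the weak fan axiom forces $\s = \tAd_\gamma(\s)$ and thus $\s \cap \tAd_\gamma(\s) = \s$. This shows $\cI_{\s,\s} = \{\s\}$, i.e.\ $|\cI_{\s,\s}| = 1$. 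For distinct $\Gamma$-orbits $\tAd_\Gamma(\s), \tAd_\Gamma(\tau)$, if $\gamma \in \Gamma_{\s,\tau}$ then the same argument gives $\s = \tAd_\gamma(\tau)$, placing $\s$ and $\tau$ in the same orbit, a contradiction. Hence $\Gamma_{\s,\tau} = \emptyset$ and $|\cI_{\s,\tau}| = 0$.

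For the reverse implication, assume both cardinality conditions and suppose $\s,\tau \in \Sigma$ satisfy $\s \cap \tau \neq \emptyset$ with a common $F \in \check D$ completing both to nilpotent orbit pairs; we must show $\s = \tau$. Taking $\gamma = \tid$ witnesses $\tid \in \Gamma_{\s,\tau}$, so $\cI_{\s,\tau} \ni \s \cap \tau \neq \emptyset$. By the hypothesis $|\cI_{\s,\tau}|=0$ for distinct orbits, $\s$ and $\tau$ must lie in the same $\Gamma$-orbit: write $\tau = \tAd_\beta(\s)$. By Lemma \ref{L:symmetries}\eref{i:s2} the map $\gamma \mapsto \gamma\beta$ is a bijection $\Gamma_{\s,\tAd_\beta(\s)} \to \Gamma_{\s,\s}$, sending $\tid \mapsto \beta$; so $\beta \in \Gamma_{\s,\s}$. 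Since $\cI_{\s,\s} = \{\s\}$ (using again that $\s \in \cI_{\s,\s}$), we conclude $\s \cap \tAd_\beta(\s) = \s$, i.e.\ $\s \subset \tau$. Applying the identical argument with the roles of $\s$ and $\tau$ exchanged (valid because $|\cI_{\tau,\tau}|=1$ as well) yields $\tau \subset \s$, and hence $\s = \tau$.

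As noted, there is no genuine technical obstacle: the lemma is a bookkeeping consequence of the definitions once one recognizes that the hypothesis ``every $\s \in \Sigma$ extends to a nilpotent orbit pair'' forces $\s \in \cI_{\s,\s}$, so that $|\cI_{\s,\s}|=1$ is equivalent to $\cI_{\s,\s} = \{\s\}$. The role of Lemma \ref{L:symmetries}\eref{i:s2} is simply to transport information between $\cI_{\s,\tau}$ and $\cI_{\s,\s}$ within a fixed $\Gamma$-orbit.
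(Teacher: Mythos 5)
Your proof is correct, and since the paper states Lemma \ref{L:weakfanI} without proof (treating it as a direct unwinding of the definitions), your argument supplies exactly the verification the authors left implicit. The two key observations you isolate — that hypothesis (iv) guarantees $\tid \in \Gamma_{\s,\s}$ and hence $\s \in \cI_{\s,\s}$, making $|\cI_{\s,\s}|=1$ equivalent to $\cI_{\s,\s}=\{\s\}$; and that Lemma \ref{L:symmetries}\emph{(ii)} transports membership in $\Gamma_{\s,\tau}$ to membership in $\Gamma_{\s,\s}$ when $\s,\tau$ lie in the same $\Gamma$-orbit — are precisely what make the reverse implication go through. Both implications check out (the forward direction uses hypothesis (ii) to ensure $\tAd_\gamma(\s)\in\Sigma$ before invoking the weak-fan axiom, and the double-containment $\s\subset\tau\subset\s$ in the reverse direction is correctly justified by symmetry via Lemma \ref{L:symmetries}\emph{(i)}). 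It is worth noting, as a sanity check, that hypotheses (i) and (iii) are not actually used in the proof — they are part of the standing assumptions for the section rather than being required for this particular equivalence.
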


\begin{proof}
This follows directly from Definition \ref{dfn:weakfan} and \eqref{SE:Ist}.
\end{proof}

% --------------------
\subsection{Finite subdivision of two-dimensional cones} \label{S:finite-subdiv}
% --------------------

Let $\Sigma$ be any collection of rational nilpotent cones satisfying the properties (i-iv) of Lemma \ref{L:weakfanI}, and the additional property that $\tdim\,\s\le 2$ for all $\s \in \Sigma$.  We will show that $\Sigma$ admits a finite subdivision into a weak fan (Proposition \ref{P:wf}).

% --------------------
\subsubsection{First modification of $\Sigma$} %\label{S:mod1}
% --------------------

Let $\tAd_\Gamma(\s_1),\ldots, \tAd_\Gamma(\s_\ell) \subset \Sigma$ denote the distinct $\tAd_\Gamma$-orbits of two-dimensional cones in $\Sigma$.  Since $\Sigma$ consists of only finitely many $\tAd_\Gamma$-orbits, Corollary \ref{C:Ifinite} and Lemma \ref{L:symmetries}\emph{\ref{i:s2}} imply that 
\[
  \s_j \ - \ \{ \s_j \cap \tau_\gamma \ | \ 
  \tau \in \Sigma \,,\ 
  \gamma \in \Gamma_{\s_j,\tau} \,,\ 
  \tdim\,(\s_j \cap \tau_\gamma)=1 \}
\]
is a finite disjoint union of two-dimensional cones $\varrho_{j,a}$.  Let $\tAd_\Gamma(\varrho_1), \ldots,\tAd_\Gamma(\varrho_k)$ be the distinct $\Gamma$-orbits of all two-dimensional cones that we obtain in this way.

Let $\Sigma'$ be the collection obtained from $\Sigma$ by: (i) replacing the orbits $\tAd_\Gamma(\s_j)$ with the orbits $\tAd_\Gamma(\varrho_a)$; and (ii) adding the (distinct) $\Gamma$-orbits of the one-dimensional $\s_j \cap \tAd_\gamma(\s_i)$ with $\gamma \in \Gamma_{\s_j,\s_i}$ (that are not already in $\Sigma$).  This ensures that $\Sigma'$ is closed under taking faces, and that each $\s_i \cap \tAd_\gamma(\s_j)$ is a disjoint union of elements of $\Sigma'$.

The collection $\Sigma'$ satisfies the properties (i-iv) of Lemma \ref{L:weakfanI}.  

\begin{lemma} \label{L:mod1}
We have $|\cI_{\varrho_a,\varrho_b}| = \d_{ab}$, for all $1 \le a,b \le k$.
\end{lemma}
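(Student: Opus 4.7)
The plan is to reduce the lemma to the geometric assertion that \emph{whenever $\gamma \in \Gamma_{\varrho_a,\varrho_b}$ with $\varrho_a \cap \tAd_\gamma(\varrho_b) \neq \emptyset$, the two open $2$-dimensional cones coincide}, i.e., $\varrho_a = \tAd_\gamma(\varrho_b)$. Once this assertion is established, the lemma follows by short bookkeeping: for $a=b$, the element $\gamma=\tid$ already furnishes $\varrho_a \in \cI_{\varrho_a,\varrho_a}$, and any further $\gamma \in \Gamma_{\varrho_a,\varrho_a}$ returns the same set $\varrho_a$; for $a \ne b$, the equality $\varrho_a = \tAd_\gamma(\varrho_b)$ would force $\varrho_a$ and $\varrho_b$ into a common $\Gamma$-orbit, contradicting the choice of distinct orbit representatives.

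Write $\varrho_a \subset \s_j$, $\varrho_b \subset \s_i$, and first reduce to the case where $\s_j \cap \tAd_\gamma(\s_i)$ is $2$-dimensional. If it is $1$-dimensional, it is a ray that was explicitly removed from $\s_j$ during the subdivision (take $\tau = \s_i$ in the definition of the removed set, noting $\Gamma_{\varrho_a,\varrho_b} \subset \Gamma_{\s_j,\s_i}$), hence disjoint from the open wedge $\varrho_a$. This forces $\varrho_a \cap \tAd_\gamma(\varrho_b) = \emptyset$ and contradicts $\gamma \in \Gamma_{\varrho_a,\varrho_b}$. Thus $\s_j$ and $\tAd_\gamma(\s_i)$ span a common $2$-plane $P$, and $W := \s_j \cap \tAd_\gamma(\s_i)$ is a $2$-dimensional open wedge in $P$ containing the nonempty intersection $\varrho_a \cap \tAd_\gamma(\varrho_b)$.

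Within $P$, I then argue that $\varrho_a$ and $\tAd_\gamma(\varrho_b)$ must coincide as open wedges. Suppose they do not. Elementary wedge geometry in the plane provides a boundary ray $r$ of $\varrho_a$ lying in the open interior of $\tAd_\gamma(\varrho_b)$ (or symmetrically, with roles reversed). Such an $r$ is either a face of $\s_j$ (hence in $\Sigma$ since $\Sigma$ is closed under faces) or was removed from $\s_j$ as a $1$-dimensional intersection $\s_j \cap \tAd_\delta(\tau)$ for some $\tau \in \Sigma$ and $\delta \in \Gamma_{\s_j,\tau}$. The goal is to show in each case that $r$ must also belong to the analogous set of rays removed from $\tAd_\gamma(\s_i)$, so that $r$ cannot lie in the open cone $\tAd_\gamma(\varrho_b)$---a contradiction. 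For the face case, one takes $\tau'' = r$ and $\delta'' = \gamma^{-1}$ in the defining formula for the removed set of $\s_i$, verifying the common Hodge filtration condition using property (iv) for $\s_i$ together with the fact that any rational nilpotent subcone of a nilpotent orbit pair is again a nilpotent orbit pair. For the removed-ray case, one transports the witness $(\tau,\delta)$ by $\tAd_{\gamma^{-1}}$ to $(\tau,\gamma^{-1}\delta)$, and uses the same subcone principle.

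The main obstacle will be this second sub-case when $\tAd_\delta(\tau)$ is $2$-dimensional and lies in a $2$-plane distinct from $P$: here the pulled-back intersection $\tAd_\gamma(\s_i) \cap \tAd_\delta(\tau)$ need not equal $r$ on the nose, but only contain it. The verification that the resulting $1$-dimensional intersection still witnesses $r$ as a removed ray for $\tAd_\gamma(\s_i)$ requires a compatibility check for the common Hodge filtration across the two cones, and is the most delicate step of the argument. Once this symmetry between the $\s_j$-side and $\s_i$-side subdivisions within $W$ is settled, the geometric claim follows immediately, and with it the lemma.
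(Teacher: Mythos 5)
Your skeleton is essentially the paper's: pass from $\gamma\in\Gamma_{\varrho_a,\varrho_b}$ to $\gamma\in\Gamma_{\s_j,\s_i}$ via Lemmas \ref{L:symmetries}\emph{(i)} and \ref{L:containment}, kill the case $\tdim(\s_j\cap\tAd_\gamma(\s_i))=1$ because that intersection is a removed ray, and in the two-dimensional case argue that the two subdivisions agree on the overlap, so the wedges either coincide or are disjoint; your final bookkeeping ($a=b$ versus $a\ne b$) is also correct, and your face sub-case is handled correctly by the subcone principle, since there $r$ lies in the \emph{open} cone $\tAd_\gamma(\s_i)$. But as written the argument is incomplete exactly where you flag it: in the removed-ray sub-case you never produce the common filtration needed to show $\gamma^{-1}\delta\in\Gamma_{\s_i,\tau}$, i.e.\ that the transported ray is removed from $\tAd_\gamma(\s_i)$ as well, and you leave this ``compatibility check'' unsettled. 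That is a genuine gap, and the subcone principle alone cannot close it: it only passes a filtration from a cone to its subcones, whereas here you must transfer a filtration compatible with $\s_j$ to one compatible with the \emph{other} ambient cone $\tAd_\gamma(\s_i)$ -- the reverse direction.

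The missing step is precisely what Lemma \ref{L:containment} supplies: its proof shows that for nested cones of equal dimension, each completable to a nilpotent orbit pair, a filtration completes one if and only if it completes the other. Apply this through the overlap $\Omega=\s_j\cap\tAd_\gamma(\s_i)$, which in your remaining case is a two-dimensional open subcone of both and is completable (the witness $F_\gamma$ for $\gamma\in\Gamma_{\varrho_a,\varrho_b}$ completes $\varrho_a$, hence $\s_j$ by Lemma \ref{L:containment}, hence $\Omega$ by the subcone principle). It follows that the set of filtrations completing $\s_j$ equals that for $\Omega$ equals that for $\tAd_\gamma(\s_i)$; in particular the witness $F_\delta$ for $\delta\in\Gamma_{\s_j,\tau}$ is automatically compatible with $\tAd_\gamma(\s_i)$, which together with the nonempty one-dimensional intersection gives $\gamma^{-1}\delta\in\Gamma_{\s_i,\tau}$, so the ray is removed on both sides and your contradiction goes through. (Your other worry, that $\tAd_\gamma(\s_i)\cap\tAd_\delta(\tau)$ may strictly contain $r$, is harmless: the removed set is that entire one-dimensional intersection, so it still separates $r$ from the open components; in fact, since the cones are salient, the intersection equals $r$.) With this insertion your proof closes, and it is a fleshed-out version of the paper's own argument, which after establishing $\gamma\in\Gamma_{\s_i,\s_j}$ simply appeals to ``by construction of the $\varrho_a$''; the mechanism implicitly underlying that one line is exactly the transfer via Lemma \ref{L:containment} that your write-up is missing.
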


\begin{proof}
Suppose that $\gamma \in \Gamma_{\varrho_a , \varrho_b}$.  By definition of the $\varrho_a$, there exist $\s_i \supset \varrho_a$ and $\s_j \supset \varrho_b$.  Then Lemmas \ref{L:symmetries}\emph{\ref{i:s1}} and \ref{L:containment} imply $\gamma \in \Gamma_{\s_i,\s_j}$.  By construction of the $\varrho_a$ this forces $\varrho_a = \tAd_\gamma(\varrho_b)$ or $\varrho_a \cap \tAd_\gamma(\varrho_b) = \emptyset$.  Thus $|\cI_{\varrho_a,\varrho_b}| = \d_{ab}$.
\end{proof}

% --------------------
\subsubsection{Second modification of $\Sigma$} %\label{S:mod2}
% --------------------

Let $\tAd_\Gamma(\tau_1) , \ldots , \tAd_\Gamma(\tau_m)$ denote the distinct $\Gamma$--orbits of one-dimensional cones in $\Sigma'$.  The set of one-dimensional cones 
\[
  \{ \tAd_\gamma(\tau_j) \ | \ 
  \gamma \in \Gamma_{\varrho_a,\tau_j} \,,\ j=1,\ldots,m\} 
  \ \subset \ \varrho_a
\]
is finite by Corollary \ref{C:Ifinite}.  The complement in $\varrho_a$ is a finite, disjoint union of two-dimensional rational nilpotent cones $\{\varsigma_{a,i}\}$.  Let $\tAd_\Gamma(\varsigma_1), \ldots,\tAd_\Gamma(\varsigma_k)$ be the distinct $\Gamma$-orbits of all two-dimensional cones that we obtain in this way.  Let $\Sigma''$ be the collection obtained from $\Sigma'$ by replacing the orbits $\tAd_\Gamma(\varrho_a)$ with the orbits $\tAd_\Gamma(\varsigma_i)$.  

The collection $\Sigma''$ also satisfies the properties (i-iv) of Lemma \ref{L:weakfanI}.

\begin{proposition}\label{P:wf}
The collection $\Sigma''$ is a weak fan.
\end{proposition}

\begin{proof}
Suppose that $\gamma \in \Gamma_{\varsigma_i,\varsigma_j}$.  By definition of the $\varsigma_i$, there exist $\varrho_a \supset \varsigma_i$ and $\varrho_b \supset \varsigma_j$.  Then Lemmas \ref{L:symmetries}\emph{\ref{i:s1}} and \ref{L:containment} imply $\gamma \in \Gamma_{\varrho_a,\varrho_b}$.  Lemma \ref{L:mod1} implies $a=b$.  By construction of the $\varsigma_i$ this forces $\varsigma_i = \tAd_\gamma(\varsigma_j)$ or $\varsigma_i \cap \tAd_\gamma(\varsigma_j) = \emptyset$.  Thus $|\cI_{\varsigma_i,\varsigma_j}| = \d_{ij}$.

Suppose that $\gamma \in \Gamma_{\varsigma_i,\tau_j}$.  Again, there exists $\varrho_a\supset \varsigma_i$, and Lemma \ref{L:containment} implies $\gamma \subset \Gamma_{\varrho_a,\tau_j}$.  The construction of the $\varsigma_{a,i}$ as the disjoint union
\[
  \bigcup_i \,\varsigma_{a,i} \ = \ \varrho_a
  \ - \ \bigcup_j \{ \tAd_\gamma(\tau_j) \ | \ 
  \gamma \in \Gamma_{\varrho_a,\tau_j} \}
\]
implies $\varsigma_i \cap \tAd_\gamma(\tau_j) = \emptyset$.  Thus $|\cI_{\varsigma_i,\tau_j}|=0$.

Since the one-dimensional cones necessarily satisfy $|\cI_{\tau_i,\tau_j}| = \d_{ij}$, the proposition now follows from Lemmas \ref{L:card} and \ref{L:weakfanI}.
\end{proof}

% --------------------
\subsection{Proof of Theorem \ref{T:main3}} \label{S:prf-main3}
% --------------------

Assume $\tdim\,S=2$, and let $\tilde S \supset S$ be some smooth projective completion with simple normal crossing divisor $\tilde S \bs S$.  Let $S_1 , \ldots , S_\ell$ denote the smooth irreducible components of $\partial S$.  Set $S^*_{ij} = S_i \cap S_j$, and $S_i^* = S_i \bs \cup_j S^*_{ij}$.  Given $I \subset \{1,\ldots,\ell\}$ with $|I| \in \{1,2\}$, Schmid's nilpotent orbit theorem associates to each (connected component of) $S^*_I$ a $\Gamma$--conjugacy class $\tAd_\Gamma(\s_I)$ of rational nilpotent cones.  The nilpotent cones associated to the codimension one strata $S_i^*$ are the $\s_i = \tspan_{\bQ_{>0}}\{N_i\}$.  The nilpotent cones associated to the codimension two strata $S^*_{ij}$ are the $\s_{ij} = \tspan_{\bQ_{>0}}\{N_i,N_j\}$.  

Let $\Sigma_{\Phi,\tilde S}$ be the set of all $\tAd_\gamma(\s_I)$, with $\gamma \in \Gamma$ and $I \subset \{1,\ldots,\ell\}$.  The collection $\Sigma_{\Phi,\tilde S}$ satisfies the properties (i-iv) of Lemma \ref{L:weakfanI}.  Let $\Sigma = \Sigma''_{\Phi,\tilde S}$ be the weak fan obtained from $\Sigma_{\Phi,\tilde S}$, as in Proposition \ref{P:wf}.  Then Theorem \ref{T:main3} follows from Lemma \ref{L:bar-S}.  \hfill\qed

\begin{lemma} \label{L:bar-S}
There exists a smooth completion $\olS \supset S$ with simple normal crossing divisor $\olS \bs S$ and a birational morphism $\olS \to \tilde S$, given by a composition of blow-ups, so that for each nilpotent orbit pair $(\s,F)$ with $\s \in \Sigma_{\Phi,\olS}$, there exists a unique minimal $\tau \in \Sigma = \Sigma''_{\Phi,\tilde S}$ so that $\s \subset \tau$ and $(\tau,F)$ is a nilpotent orbit pair.
\end{lemma}

\begin{proof}
Consider the blow-up $B_{ij}(\tilde S)$ of $\tilde S$ at (a point $p$ in the) codimension two $S_{ij}$.  The lemma follows directly from the fact that the one-dimensional nilpotent cone associated to the exceptional divisor $E = \bP^1$ is $\s_E = \tspan_{\bQ_{>0}}\{N_i + N_j\}$; this is pictured in Figure \ref{fig:bu1}.
\begin{figure}
\caption{Monodromy after blow-up at $p \in S_{ij}^*$}
\smallskip
\begin{tikzpicture}
  \draw (0,1) -- (3,1);
  \node[right] at (3,1) {$S_i$};
  \draw (1.85,0.95) to [out=-90,in=180] (2,0.8);
  \draw (2,0.8) to [out=0,in=270] (2.15,1);
  \draw (2.15,1) to [out=90,in=0] (2,1.2);
  \draw[->] (2,1.2) to [out=180,in=90] (1.85,1.05);
  \node[below] at (2,0.8) {$N_i$};
  \draw[red] (1,0) -- (1,4);
  \node[above,left,red] at (1,3.8) {$E$};
  \draw[red] (0.95,1.85) to [out=180,in=270] (0.8,2);
  \draw[red] (0.8,2) to [out=90,in=180] (1,2.15);
  \draw[red] (1,2.15) to [out=0,in=90] (1.2,2);
  \draw[->,red] (1.2,2) to [out=270,in=0] (1.05,1.85);
  \node[left,red] at (0.8,2) {$N_i+N_j$};
  \draw (0,3) -- (3,3);
  \node[right] at (3,3) {$S_j$};
  \draw (1.85,2.95) to [out=-90,in=180] (2,2.8);
  \draw (2,2.8) to [out=0,in=270] (2.15,3);
  \draw (2.15,3) to [out=90,in=0] (2,3.2);
  \draw[->] (2,3.2) to [out=180,in=90] (1.85,3.05);
  \node[below] at (2,2.8) {$N_j$};
  \draw[->] (5,2) -- (7,2);
  \node[above] at (6,2) {$B_{ij}(\tilde S)$};
\end{tikzpicture}
\qquad
\begin{tikzpicture}
  \draw (0,1) -- (3,1);
  \node[right] at (3,1) {$S_i$};
  \draw (1.85,0.95) to [out=-90,in=180] (2,0.8);
  \draw (2,0.8) to [out=0,in=270] (2.15,1);
  \draw (2.15,1) to [out=90,in=0] (2,1.2);
  \draw[->] (2,1.2) to [out=180,in=90] (1.85,1.05);
  \node[below] at (2,0.8) {$N_i$};
  \draw (1,0) -- (1,3);
  \node[above] at (1,3) {$S_j$};
  \draw (0.95,1.85) to [out=180,in=270] (0.8,2);
  \draw (0.8,2) to [out=90,in=180] (1,2.15);
  \draw (1,2.15) to [out=0,in=90] (1.2,2);
  \draw[->] (1.2,2) to [out=270,in=0] (1.05,1.85);
  \node[left] at (0.8,2) {$N_j$};
  \fill[red] (1,1) circle (0.1);
\end{tikzpicture}
\label{fig:bu1}
\end{figure}
After a finite number of such blow-ups of we obtain the desired $\olS$.
\end{proof}  

\begin{example}
As an illustrative example, suppose that the process of obtaining $\Sigma''_{\Phi,\tilde S}$ subdivides $\s_{ij} = \tspan_{\bQ_{>0}}\{N_i,N_j\} \in \Sigma_{\Phi,\tilde S}$ into two cones $\tau = \tspan_{\bQ_{>0}}\{ N_j , 3 N_i + N_j \} \in \Sigma''_{\Phi,\tilde S}$ and $\tau' = \tspan_{\bQ_{>0}}\{ 3 N_i + N_j , N_i \} \in \Sigma''_{\Phi,\tilde S}$, as pictured in Figure \ref{fig:tau}.
\begin{figure}
\caption{Finite subdivision of $\s_{ij} = \tspan_{\bQ_{>0}}\{N_i,N_j\}$}
\smallskip
\begin{tikzpicture}
  \draw (0,0) -- (2,0);
  \node[right] at (2,0) {$N_i$};
  \draw (0,0) -- (0,2);
  \node[above] at (0,2) {$N_j$};
  \draw[orange] (0,0) -- (6,2); 
  \node[right,orange] at (6,2) {$3N_i + N_j$};
  \draw[red] (0,0) -- (2,2);
  \node[above,red] at (2,2) {$N_1+N_2$};
  \draw[blue] (0,0) -- (4,2);
  \node[above,blue] at (4,2) {$2N_1+N_2$};
\end{tikzpicture}
\label{fig:tau}
\end{figure}
Taking a sequence of three blow-ups over $p$, as pictured in Figure \ref{fig:bu2},
\begin{figure}
\caption{Blow-ups for subdivision of $\s_{ij}$}
\smallskip
\begin{tikzpicture}
  \draw (0,4) -- (3,4);
  \node [left] at (0,4) {$S_j$};
  \draw (1.85,3.95) to [out=-90,in=180] (2,3.8);
  \draw (2,3.8) to [out=0,in=270] (2.15,4);
  \draw (2.15,4) to [out=90,in=0] (2,4.2);
  \draw[->] (2,4.2) to [out=180,in=90] (1.85,4.05);
  \node [above] at (2,4.2) {$N_j$};
  \draw[red] (1,5) -- (1,1);
  \node [above,red] at (1,5) {$E_1$};
  \draw[red] (0.95,2.85) to [out=180,in=270] (0.8,3);
  \draw[red] (0.8,3) to [out=90,in=180] (1,3.15);
  \draw[red] (1,3.15) to [out=0,in=90] (1.2,3);
  \draw[->,red] (1.2,3) to [out=270,in=0] (1.05,2.85);
  \node[left,red] at (0.8,3) {$N_i+N_j$};
  \draw[blue] (0,2) -- (4,2);
  \node [left,blue] at (0,2) {$E_2$};
  \draw[blue] (1.85,1.95) to [out=-90,in=180] (2,1.8);
  \draw[blue] (2,1.8) to [out=0,in=270] (2.15,2);
  \draw[blue] (2.15,2) to [out=90,in=0] (2,2.2);
  \draw[->,blue] (2,2.2) to [out=180,in=90] (1.85,2.05);
  \node [below,blue] at (2,1.8) {$2N_i+N_j$};
  \draw[orange] (3,3) -- (3,-1);
  \node [below,orange] at (3,-1) {$E_3$};
  \draw[orange] (2.95,0.85) to [out=180,in=270] (2.8,1);
  \draw[orange] (2.8,1) to [out=90,in=180] (3,1.15);
  \draw[orange] (3,1.15) to [out=0,in=90] (3.2,1);
  \draw[->,orange] (3.2,1) to [out=270,in=0] (3.05,0.85);
  \node[right,orange] at (3.2,1) {$3N_i+N_j$};
  \draw (2,0) -- (5,0);
  \node [right] at (5,0) {$S_i$};
  \draw (3.85,-0.05) to [out=-90,in=180] (4,-0.2);
  \draw (4,-0.2) to [out=0,in=270] (4.15,0);
  \draw (4.15,0) to [out=90,in=0] (4,0.2);
  \draw[->] (4,0.2) to [out=180,in=90] (3.85,0.05);
  \node [below] at (4,-0.2) {$N_i$};
\end{tikzpicture}
\label{fig:bu2}
\end{figure}
replaces the cone $\s_{ij} \in \Sigma_{\Phi,\tilde S}$ with the cones $\s_{ij}^1 = \tspan_{\bQ_{>0}}\{N_i+N_j,N_j\} \subset \tau$, $\s_{ij}^2 = \tspan_{\bQ_{>0}}\{2N_i+N_j,N_i+N_j\} \subset \tau$, $\s_{ij}^3 = \tspan_{\bQ_{>0}}\{3N_i+N_j,2N_i+N_j\} \subset \tau$, and $\s_{ij}^4 = \tspan_{\bQ_{>0}}\{3N_i+N_j,N_i\} = \tau'$.  These new $\s_{ij}^a \in \Sigma_{\Phi,\olS}$.
\end{example}

Let $\olS \to \hat S$ be the birational morphism obtained by blowing-down the exceptional divisors $E \subset \olS$ with $\s_E \not\in \Sigma = \Sigma''_{\Phi,\tilde S}$.  In other words, $\hat{S}\rightarrow \tilde{S}$ is the weighted blow-up (the log modifiction) induced by the weak fan $\Sigma$, and  $\olS\rightarrow \hat{S}$ is a resolution of singularities. 

\begin{corollary}\label{C:prf}
There is a commutative diagram
\begin{equation}\label{F:Res}
\begin{tikzcd}
\olS \arrow[d] \arrow[r, "\Phi_\Sigma"] & \Gamma \backslash D_{\Sigma} \\
\hat S \arrow[ru,"\hat\Phi_\Sigma"'] & 
\end{tikzcd}
\end{equation}
of morphisms of so that $\Phi_\Sigma(\olS) = \hat\Phi_\Sigma(\hat S)$.
\end{corollary}

\begin{proof}
Fix $p\in \hat{S}$. For any $p'\in \olS$ lying above $p$, any neighborhood $U'$ of $p'\in\olS$ is mapped to a neighborhood $U$ of $p\in \tilde S$, and this map is an isomorphism $U' \cap S \xrightarrow{\sim} U\cap S$. Since the nilpotent orbits are determined by $\left.\Phi\right|_{U'\cap S} = \left.\Phi\right|_{U \cap S}$, this implies %that as the choice of $p'$ and the coordinated neighborhood around $p'$ vary, the resulting limiting Hodge filtrations at $p'$ 
that the $\Gamma$--conjugacy class of the orbit $\exp(\bC\sigma_{p})F_p$ is constant along the fibres of $\olS \to \hat S$. Since by definition, $\Phi_{\Sigma}(p')$ is the nilpotent orbit pair $(\sigma_p, F_{p'})$ mod $\Gamma$, $\Phi_\Sigma : \olS \to \Gamma \bs D_\Sigma$ is locally constant along the exceptional divisors of $\olS \to \tilde S$. This, along with Remark \ref{R:KNU} and Lemma \ref{L:bar-S}, yields the corollary.
\end{proof}

\begin{remark} \label{R:prf}
Note that $\hat S$ is the minimal projective completion of $S$ through which both $\olS \to \tilde S$ and $\Phi_\Sigma : \olS \to \Gamma \bs D_\Sigma$ factor.  However, unlike $\olS$, $\hat S$ need not be smooth, nor $\hat S \bs S$ a normal crossing divisor.
\end{remark}

\begin{figure}
\caption{Extension of $\Phi$: schematic of the constructions}
\smallskip
\begin{tikzcd}[column sep=small,row sep=small]
  & \Gamma \bs D \arrow[dr,hook] & \\
  S \arrow[ur,"\Phi"] \arrow[r,hook] 
  	\arrow[rd,hook] \arrow[rdd,hook]
  & \olS \arrow[r,"\Phi_\Sigma"] \arrow[d]
  & \Gamma \bs D_\Sigma \\
  & \hat S \arrow[d] \arrow[ru,"\hat\Phi_\Sigma"'] & \\
  & \tilde S &
\end{tikzcd}
\end{figure}

\begin{remark}[The hermitian case]
The construction above provides a finite subdivision of $\Sigma_{\Phi,\tilde S}$ into a \emph{weak fan} $\Sigma_{\Phi,\tilde S}''$ under the hypothesis that $\tdim\,S=2$.  If $D$ is hermitian, then we may both: (i) drop the constraint on $\tdim\,S$, and (ii) obtain a \emph{fan} by finite subdivision of $\Sigma_{\Phi,\tilde S}$.  The essential observation here is that given $\s,\tau \in \Sigma_{\Phi,\tilde S}$, we have
\begin{equation}\label{E:hermitian}
  \Gamma_{\s,\tau} \ = \ \{
  \gamma \in \Gamma \ | \ 
  \s \cap \tau_\gamma \not= \emptyset\} \,.
\end{equation}
Assuming \eqref{E:hermitian} for the moment, Corollary \ref{C:Ifinite} implies that $\cI_{\s,\tau} = \{ \s \cap \tau_\gamma \ | \ \gamma \in \Gamma\}$ is finite.  We may then obtain a fan $\Sigma$ from $\Sigma_{\Phi,\tilde S}$ by finite subdivision, as done in \cite[\S7]{deng2023}.  Then it is natural to ask Theorems \ref{T:main1} and \ref{T:main3} hold when we replace the hypothesis $\tdim\,S=2$ with the condition that $D$ be hermitian.  To prove this, it suffices to establish the corresponding analog of Lemma \ref{L:bar-S}.  We anticipate that this can be done, but have elected not to undertake it here.
\end{remark}

\begin{proof}[Proof of \eqref{E:hermitian}]
The containment $\Gamma_{\s,\tau} \subset \{
\gamma \in \Gamma \ | \ \s \cap \tau_\gamma \not= \emptyset\}$
is by definition \eqref{SE:Ist}.  For the converse, assume that $\s\cap\tau \not=\emptyset$.  We must show that there exists $F \in \check D$ so that both $(\s,F)$ and $(\tau,F)$ are nilpotent orbit pairs.

By definition of $\Sigma_{\Phi,\tilde S}$ there exists $F \in \check D$ so that $(\s,F)$ is nilpotent orbit pair.  Without loss of generality, we may assume the associated mixed Hodge structure is $\bR$--split, as in \S\ref{S:nI}.  Let $\fg_\bC = \op\,\fg^{p,q}_{W,F}$ be the Deligne splitting induced by the mixed Hodge structure $(W(\s)[-\sfn],F)$.  The salient feature of the Hermitian case is that 
\[
  W_{-2}(\fg_\bC) \ = \ \fg^{-1,-1}_{W,F} \,.
\]
We always have $\s \subset W_{-2}(\fg_\bQ)$ by definition of $W(\s)$.  Then $\sigma \cap \tau\not=\emptyset$, and the independence of the weight filtration on $N \in \tau$, implies $\tau \subset W_{-2}(\fg_\bC) = \fg^{-1,-1}_{W,F}$.  We claim that this implies $(\tau,F)$ is a nilpotent orbit pair; equivalently, $\tau$ polarizes the mixed Hodge structure $(W(\s)[-\sfn],F)$.

To see this, let $G^0$ be the connected identity component of the subgroup of $G$ preserving the Deligne splitting of $\fg_\bC$ (under the adjoint action).  Fix $N \in \sigma \cap \tau$.  The orbit $\cO = \tAd(G^0)(N) \subset \fg_\bR \cap \fg^{-1,-1}_{W,F}$ is a connected component of the set of all $N' \in \fg_\bR \cap \fg^{-1,-1}_{W,F}$ with $W(\s) = W(N')$, \cite[Lemma 3.5]{MR3751291}.  So $\sigma,\tau \subset \cO$.  And because $G^0$ preserves $F$, $W$ and $Q$, any $N' \in \cO$ polarizes the mixed Hodge structure $(W(\s)[-\sfn],F)$; in particular, $\tau \subset \cO$ polarizes the mixed Hodge structure.
\end{proof}

% --------------------
\section{The case of mixed period maps}\label{S:VMHS}
% --------------------

Kato--Nakayama--Usui's theory \cite{MR3084721} applies not just to period maps (of pure, polarized Hodge structures), but more generally to mixed period maps
\begin{equation}\label{E:Psi}
  \Psi : S \to \Gamma \bs \cM
\end{equation}
induced by admissible, graded-polarized variations of mixed Hodge structure.  (In this section we assume the reader is familiar with mixed Hodge structures, and will be correspondingly brief.  Good references include \cite{MR2393625, MR866665} and \cite[\S3, \S8]{MR3288678}.)  And the proof of Theorem \ref{T:main1} applies in this more general setting to yield

\begin{theorem}\label{T:main1-mixed}
Suppose that $\tdim\,S=2$.  There exists a smooth projective completion $\olS \supset S$, with simple normal crossing divisor $\partial S = \olS \bs S$, and a logarithmic manifold $\Gamma \bs \cM_\Sigma$ so that $\Gamma \bs \cM \inj \Gamma \bs \cM_\Sigma$ and the mixed period map \eqref{E:Psi} extends to a morphism $\Psi_\Sigma : \olS \to \Gamma \bs \cM_\Sigma$ of logarithmic manifolds.  The image $\Psi_\Sigma(\olS)$ is a compact algebraic space. 
\end{theorem}

\begin{proof}
The argument is almost identical to that establishing Theorem \ref{T:main1}.  The key points to note are: (i) Kato--Nakayama--Usui's Theorem \ref{T:KNU} holds in the more general setting of mixed period maps, cf.~\cite{MR2721860, MR3084721}.  So the crux of the matter is to exhibit a weak fan for the mixed period map.  By \cite[Theorem 2.1 \& Lemma 4.2]{MR2721860} it suffices to exhibit a weak fan for the induced variation of pure Hodge structures on the weight graded quotients.  The existence of this fan is provided by Theorem \ref{T:main3}.
\end{proof}

We apply Theorem \ref{T:main1-mixed} to the construction of N\'eron models. Here we assume that the reader is familiar with intermediate Jacobians, normal functions and (generalized) N\'eron models, and will be commensurately brief in our discussion.  Briefly and informally, given a family of intermediate Jacobians $\cJ \to S$ and an admissible normal function $\nu : S \to \cJ$, a N\'eron model should provide an extension $\cJ_\mathrm{e} \to \olS$ of the family to which the normal function extends $\nu_\mathrm{e} : \olS \to \cJ_\mathrm{e}$.  Such constructions are motivated by their applications to the study of families of cycles in general, and the Hodge conjecture in particular \cite{MR2534102, MR2385303, MR2313336, MR2796415}.  In the case that $\tdim\,S=1$, two such constructions have been given by Kato--Nakayama--Usui \cite{MR3084721} and Green--Griffiths--Kerr \cite{MR2601630}.  The two constructions are homeomorphic \cite{MR2832807}, and Theorem \ref{T:neron} below may be viewed as an extension of this work to the case $\tdim\,S=2$ via the approach of Kato--Nakayama--Usui.  A general construction (over an arbitrary base), exhibiting the expected properties of the the identity component of the N\'eron model, has been given by Schnell \cite{MR2897692} using Saito's theory of mixed Hodge modules.

Suppose that $\mathbb{V} \to S$ is a polarized variation of integral Hodge structures (as in \S\ref{S:mainresult}) of weight $\mathsf{w} = -1$.  To each point $s \in S$ we associate a complex torus $J_s = V_\bC /(F^0_s(V_\bC) + V_\bZ)$.  In the geometric case that $\mathbb{V} \subset R^{2\mathsf{n}-1}f_*\bZ(\mathsf{n})/(\mathrm{torsion})$, for a smooth proper morphism $f : \cX \to S$, $J_s$ is the $\mathsf{n}$-th intermediate Jacobian of $X_s = f^{-1}(s)$.  In general we have a natural identification of $J_s$ with the set $\mathrm{Ext}^1(\bZ(0),\mathbb{V}_s)$ of isomorphism classes of short exact sequences $0 \to \mathbb{V}_s \to V' \to \bZ(0) \to 0$ of mixed Hodge structures.  This provides the following description of the bundle $\cJ = \cup_s J_s$ over $S$.  

Recall the notation $V_\bZ = \mathbb{V}_{s_o}$ of \S\ref{S:mainresult}.  Fix a lattice $V_\bZ'$ extending $V_\bZ$ by short exact sequence $0 \to V_\bZ \to V'_\bZ \to \bZ \to 0$.  Let $\cM$ be the mixed period domain parameterizing graded-polarized mixed Hodge structures $V'_\bZ$ with weight filtration $W_{-2}=0$, $W_{-1} \simeq V_\bZ$ and $W_0 = V_\bZ'$.  We have a natural quotient map $\cM \to D$.  Let $\tAut(V'_\bZ,W)$ be the automorphisms of $V'_\bZ$ preserving the weight filtration, and let $\Gamma' \simeq \Gamma \ltimes V_\bZ$ be the preimage of $\Gamma \times \{\tId\}$ under the natural projection $\tAut(V_\bZ',W) \to \tAut(V_\bZ) \times \tAut(\tGr^W_0)$.  (Equivalently, $\Gamma' \subset \tAut(V_\bZ',W)$ is the subgroup of all elements acting trivially on $\tGr^W_0$, and as $\Gamma$ on $W_{-1}$.)  Then we have induced projections $\Gamma'\bs \cM \to \Gamma \bs D$, and $\cJ$ may be realized as the fibre product \cite{MR0771384}
\begin{equation}\label{E:fp*} \begin{tikzcd}
  \cJ \arrow[r] \arrow[d] & \Gamma'\bs \cM \arrow[d,"\pi"] \\
  S \arrow[r,"\Phi"] & \Gamma \bs D \,.
\end{tikzcd} \end{equation}

\begin{theorem}\label{T:neron}
Let $\Phi : S \to \Gamma \bs D$, $\olS$ and $\Sigma$ be as in Theorem \ref{T:main3}.  There is a morphism $\pi_{\Sigma'} : \Gamma'\bs \cM_{\Sigma'} \to \Gamma \bs D_\Sigma$ of log manifolds extending the map $\pi : \Gamma' \bs \cM \to \Gamma \bs D$ of \eqref{E:fp*}.  The associated fibre product 
\begin{equation}\label{E:fp}
\begin{tikzcd}
  \cJ_{\Sigma'} \arrow[r] \arrow[d] & \Gamma'\bs \cM_{\Sigma'} 
  \arrow[d] \\
  \olS \arrow[r,"\Phi"] & \Gamma \bs D_\Sigma \,,
\end{tikzcd} \end{equation}
which may be viewed as a completion of \eqref{E:fp*}, has the structure of a Hausdorff log manifold.  Given a normal function $\nu : S \to \cJ$, that is admissible with respect to $\olS \supset S$, there exists a choice of $\Sigma'$ so that $\nu$ extends to $\nu_{\Sigma'} : \olS \to \cJ_{\Sigma'}$.
\end{theorem}

\begin{remark}
The space $\cJ_{\Sigma'}$ is a \emph{N\'eron model}.  In the geometric situation above, the N\'eron model encodes the degeneration of the intermediate Jacobians at points of $\partial S = \olS \bs S$.

Similar results were established by Nakayama \cite{MR3086749, MR3532114} subject to strong constraints on the nilpotent cones $\s$ and the monodromy group $\Gamma$.  Theorem \ref{T:neron} removes these constraints.
\end{remark}

\begin{proof}
The theorem is a corollary of Theorems \ref{T:main3} and \ref{T:main1-mixed}, and the work \cite{MR3084721}.

Let $\mathbf{U}$ be the kernel of the projection $\tAut(V_\bZ') \to \tAut(V_\bZ) \times \tAut(\bZ(0))$.  From $\Sigma$ and a choice of subgroup $\bfU(\bZ) \leq \Upsilon \leq \bfU(\bQ)$, subject to certain properties, we may construct a $\Gamma'$--strongly compatible weak fan $\Sigma' \subset \tEnd(V'_\bQ,W)$, \cf~\cite[\S4]{MR2721860}, so that $\pi: \Gamma' \bs \cM \to \Gamma \bs D$ extends to a map $\pi_\Sigma : \Gamma'\bs \cM_{\Sigma'} \to \Gamma \bs D_\Sigma$ of log manifolds.  Then we have the fibre product \eqref{E:fp}, \cf~\cite{MR2465224, MR3084721}, which may be viewed as a completion of \eqref{E:fp*}, and has the structure of a Hausdorff log manifold.

An admissible normal function $\nu : S \to \cJ$ is equivalent to an extension $0 \to \mathbb{V} \to \mathbb{V}' \to \bZ(0) \to 0$ in the category of admissible variations of mixed Hodge structure \cite{MR1374710}.  Let $\Sigma'$ be the fan of $\Sigma$ in Theorem \ref{T:main1-mixed}.  Then the extension $\nu_{\Sigma'}$ follows from Theorems \ref{T:main1} and \ref{T:main1-mixed}.
\end{proof}

\begin{remark}
When $\Upsilon = \mathbf{U}(\bZ)$, the space $\cJ_{\Sigma'}$ is the \emph{connected N\'eron model}.
\end{remark}
%\input{S5old}
% --------------------
\section{Proof of Theorem \ref{T:BKTplus}} \label{S:prf-BKT+}
% --------------------

This is the main technical part of the paper in which we will prove Theorem \ref{T:BKTplus}. This section is organized as follows: We first review several known results from \cite{MR840721}, \cite{MR4155216} and \cite{MR3803710}, then provide a refinement of \cite[Theorem 1.5]{MR4155216} as well as a variant of \cite[Proposition 4.7]{MR3803710}. Necessary background on Siegel sets and generalizations can be found in Section \ref{S:appendix}.

\subsection{The $\tSL_2$-orbit theorem and Bakker--Klingler--Tsimerman's finiteness result}

In this section we review the classical $\tSL_2$-orbit theorem from \cite{MR840721} as well as \cite[Theorem 1.5]{MR4155216}. For the period domain $D$, recall $D\simeq G/M$ where $M\leq G$ is the stabilizer of some chosen base point $\varphi\in D$. 

Let \(X\) denote the symmetric space of positive definite symmetric bilinear
forms on \(V_{\mathbb R}\) with fixed determinant $1$. Thus $X\simeq SL(V_{\mathbb R})/K$ where \(K\) is the stabilizer of a chosen positive definite form. We may choose $M\leq K$, then there is a canonical $G$-equivariant morphism called the Hodge metric map:
\begin{equation}\label{E:Hodgemetric}
D=G/M\rightarrow \tSL(V_\bR)/K=:X, \ \varphi\mapsto Q(C_{\varphi}(-),-)
\end{equation}
where $C_{\varphi}\in \mathrm{Aut}(V_\bR)$ is the Weil operator associated to $\varphi\in D$. For any $\varphi\in D$ we denote $\varphi'\in X$ as its image. We now recall the following lemma which is a special case of \cite[Theorem 1.5]{MR4155216} when we assume the local period map agrees with some nilpotent orbit:

\begin{lemma}\label{L:BKTmain}
    The image of $\exp(\bi \mathfrak{H}_\s^+)\cdot F_\mathrm{id}\subset D$ in $X$ via the Hodge metric map is contained in the union of finitely many Siegel sets of $X$.
\end{lemma}

 Since by \cite[Lemma 7.5]{MR0147566} the preimage of any Siegel set of $X$ via the Hodge metric map is contained in the union of finitely many Siegel sets of $D$, Lemma \ref{L:BKTmain} implies:

\begin{theorem}\label{T:BKTmain}
   The subset $\exp(\bi \mathfrak{H}_\s^+)\cdot F_\mathrm{id}\subset D$ is contained in the union of finitely many Siegel sets in $D$.  
\end{theorem}

% \subsubsection{Refinement of \cite{MR4155216} and the $\tSL_2$-orbit theorem}

We need finer descriptions about the Siegel triples associated to Siegel sets in Lemma \ref{L:BKTmain} and Theorem \ref{T:BKTmain}. Useful references are \cite[Sec. 4]{MR4155216}, \cite[Chap. 1]{MR1046630} and \cite[7.5-7.7]{MR0147566}. See also \cite{Urb24}. We also need part of Cattani--Kaplan--Schmid's multivariable $\tSL_2$-orbit theorem \cite[(4.20)]{MR840721} which can be summarized as follows.
\begin{theorem}\label{T:CKSSL2}
Associated to the $\bR$-split MHS $(W=W(\sigma), F_\mathrm{id})$ with nilpotent cone $\s=\langle N_1,...,N_k\rangle$ and a chosen order $y_1\geq \cdots \geq y_k$ is a choice of commuting real $\tSL_2$ triples
\[
[\hat{N}_1,\hat{Y}_1,\hat{N}^+_1],\ldots,[\hat{N}_k,\hat{Y}_k,\hat{N}^+_k]
\]
such that for any $1\leq l\leq k$,
\begin{enumerate}
    \item $W(\sum_{1\leq j\leq l}N_j)=W(\sum_{1\leq j\leq l}\hat{N}_j)=:W_l$. In particular, all $W(\sum_{1\leq j\leq l}\hat{N}_j)$ are defined over $\bQ$.
    \item There exist $\hat{F}_1,...,\hat{F}_k\in \check{D}$ such that each $(W_j, \hat{F}_j)$ is an $\bR$-split MHS polarized by $\sum_{1\leq r\leq j}\hat{N}_r$ and graded by $\bfY_j:=\sum_{1\leq r\leq j}\hat{Y}_r$. 
    \item $\hat{F}_j=\exp(\bi (\hat{N}_{j+1}+...+\hat{N}_k))F_\mathrm{id}$. In particular, $(W_k,\hat{F}_k)=(W,F_\mathrm{id})$.
\end{enumerate}
\end{theorem}

The commuting semisimple grading elements $\bfY_1,...,\bfY_k$ and their adjoint operators give multigradings on $V_\bR$ and $\mathfrak{g}_\bR$ by:
\begin{eqnarray}\label{E:MultiGrading}
V_\bR&=&\bigoplus_{d_1,...,d_k \in \bZ}V^{d_1,...,d_k}, %(d_1,...,d_k)\in \bZ^k, 
\\
\nonumber \mathfrak{g}_\bR&=&\bigoplus_{d_1,...,d_k \in \bZ}\mathfrak{g}^{d_1,...,d_k},% (d_1,...,d_k)\in \bZ^k
\end{eqnarray}
where each $d_j$ is the eigenvalue of $\bfY_j$ or $\mathrm{ad}_{\bfY_j}$. We also denote the weight filtrations on $V$ and $\fg$ induced by each $\bfY_j$ as $W_j$ and $W_j(\fg)$. 

Let $\mathfrak{p}_\cap:=\oplus_{d_i\leq 0}\mathfrak{g}^{d_1,...,d_k}=\bigcap_{1\leq j\leq k}W_j^{0}(\fg)$ and $P_\cap$ be the algebraic group with Lie algebra $\mathfrak{p}_\cap $.   It follows that $P_\cap$ is the $\bQ$-algebraic group stabilizing all weight filtrations $\{W_j\}_{1\leq j\leq k}$.  There are Levi-type decompositions:
\begin{equation}\label{E:SL2liealgebradecomp}
\mathfrak{g}_\cap =\mathfrak{g}^{0,...,0}\oplus(\oplus_{d_i\leq 0, \sum d_i<0}\mathfrak{g}^{d_1,...,d_k})=:{\mathfrak{l}_\cap}\oplus {\mathfrak{u}_\cap}
\end{equation}
in which ${\mathfrak{l}_\cap}=\mathfrak{g}^{0,...,0}=\bigcap_{1\leq j\leq r}\mathrm{ker}(\mathrm{ad}_{\hat{Y}_j})$ is reductive and ${\mathfrak{u}_\cap}$ is nilpotent, and its companion on the Lie group level:
\begin{equation}\label{E:SL2liegroupdecomp}
P_\cap=U_\cap\rtimes L_\cap, \ U_\cap=\exp(\mathfrak{u}_\cap), \ \mathrm{Lie}(L_\cap)=\mathfrak{l}_\cap.
\end{equation}
\begin{remark}
    Decompositions \eqref{E:SL2liealgebradecomp} and \eqref{E:SL2liegroupdecomp} are defined only over $\bR$ in general.
\end{remark}

Let $\mathcal{E}=\langle e_1,...,e_n\rangle$ be a basis of $V_\bR$ compatible with the multi-grading in the sense that any $e_j$ belongs to some $V^{d_1,...,d_k}$ in \eqref{E:MultiGrading}. Consider the set $\sO$ of linear relations on $\mathcal{E}$ (denoted as $\preceq$) satisfying the following condition: If $e\in V^{d_1,...,d_k}$ and $e'\in V^{d_1',...,d_k'}$ with $d_i
\leq d_i'$ for all $1\leq i\leq k$, then $e\preceq e'$. 

A choice of ordered basis $(\cE,\preceq)$ determines a canonical identification of $\tSL(V_\bR)$ with the matrix group $\tSL_N\bR$ for some $N\in \mathbb{N}$. Let $(P_\cE, A_\cE, K_\cE)_\preceq$ be the triple of subgroups of $\tSL(V_\bR)$ given by: $P_\cE$ is the upper-triangular subgroup, $A_\cE$ is the diagonal subgroup hence contains all $\{\bfY_j\}_{1\leq j\leq k}$, and $K_\cE$ is the standard orthogonal group. We also define $\mathfrak{H}_\sigma^{\geq}\subset \mathfrak{H}_\sigma^{+}$ as the sector:
\[
\mathfrak{H}_\sigma^{\geq}:=\{M=\sum_{1\leq j\leq k}y_jN_j\in  \mathfrak{H}_\sigma^{+}, \ y_1\geq y_2\geq...\geq y_k\geq 1\}.
\] 
Since the region $\mathfrak{H}_\sigma^{+}$ is covered by finitely many such sectors (up to choosing an element in $S_k$), it follows from the detailed proof of Theorem \ref{T:BKTmain} in \cite[Sec. 4]{MR4155216}, basic reduction theory introduced in \cite[Chap. 1]{MR1046630} and \cite[Lemma 7.5]{MR0147566} that Lemma \ref{L:BKTmain} can be now refined as follows:
\begin{theorem}\label{T:BKTrefined}
Fix an unordered basis $\cE$ of $V_\bR$ compatible with the multigrading \eqref{E:MultiGrading}, and $M\in \mathfrak{H}_\sigma^\geq$. Denote $\varphi:=\exp(\bi M)\cdot F_\mathrm{id}\in D$, then $(\exp(\bi\mathfrak{H}_\sigma^\geq)\cdot F_\mathrm{id})'\subset X$ is contained in finitely many Siegel sets of the form $\mathfrak{S}\cdot\varphi'\subset X$ in which $\fS =\Omega A_{\varphi', t'}K_{\varphi'}\subset \tSL(V_\bR)$, with $t' >0$, is a Siegel set with respect to the Siegel triple $(P,A_{\varphi'}, K_{\varphi'})$ with the following properties:
\begin{i_list_emph}
    \item 
    There exists an order $\preceq$ in $\sO$ and its corresponding Siegel triple $(P_\cE, A_\cE, K_\cE)_\preceq$ such that $P=P_\cE$ and $(A_{\varphi'},K_{\varphi'})$ is $P$-conjugate to $(A_\cE,K_\cE)$.
    \item 
    $K_{\varphi'}\leq \tSL(V_\bR)$ is the maximal compact subgroup stabilizing $\varphi'\in X$.
    \item 
    $K_\varphi=G\cap K_{\varphi'}$ and the Cartan involution of $K_{\varphi'}$ on $\tSL(V_\bR)$ restricts to the Cartan involution of $K_{\varphi}$ on $G$. 
    \item 
    There exists a Siegel triple $(P_1, A_1, K_{\varphi})$ of $G$ such that $A_1=A_{\varphi'}\cap G$ and $R_u(P_1)\leq R_u(P)\cap G$. 
\end{i_list_emph}
\end{theorem}
\begin{proof}
    By \cite[Sec. 4]{MR4155216}, $(\exp(\bi\mathfrak{H}_\sigma^\geq)\cdot F_\mathrm{id})'\subset X$ is covered by finitely many subsets of $X$ of the form
    \[
    \{h\in X, h \ \text{is} \  (\cE, \preceq,L)- \text{reduced for some } L>0\}.
    \]
    Therefore, (1) and (2) follow from the relationship between the sets of $(\cE, \preceq,L)$-reduced forms in $X$ and Siegel sets in $X$ introduced in \cite[Chap. 1]{MR1046630}, and (3) follows from \cite[Lemma 7.5]{MR0147566}.
\end{proof}
\begin{remark}
    To see the pair $G\leq\tSL(V_\bR)$ satisfies the assumption of \cite[Lemma 7.5]{MR0147566}, see \cite{MR2914941} and \cite{MR4614603}.
\end{remark}

Therefore, for any Siegel set $\fS\cdot \varphi'\subset X$ appearing in the finite collection of Theorem \ref{T:BKTrefined}, we may define the subset $E^+_\fS\subset X$ as:
\begin{equation}\label{E:coverbysiegel}
    E^+_\fS:=\fS\cdot\varphi'\cap (\exp(\bi\mathfrak{H}_\sigma^\geq)\cdot F_\mathrm{id})'.
\end{equation}
It follows from Theorem \ref{T:BKTrefined} that $(\exp(\bi\mathfrak{H}_\sigma^\geq)\cdot F_\mathrm{id})'$ and hence $(\exp(\bi\mathfrak{H}_\sigma^+)\cdot F_\mathrm{id})'$ are covered by finitely many sets of the form $E^+_\fS$.

%--------------------------------------
\subsection{Siegel sets of reductive pairs}
%--------------------------------------

In this section we review \cite[Theorem 4.1]{MR3803710} and its corrected version in \cite{MR4593766}. Moreover, by reviewing the details on the proof, we provide a more detailed version of this theorem which is critical for the proof of Theorem \ref{T:BKTplus}.

\begin{theorem}[{\cite{MR4593766}}]\label{T:OrrSch}
    Let $\bfH\leq \bfG$ be a pair of $\bQ$-reductive groups. If $(P_H,A_H,K_H)$ is a Siegel triple associated to some Siegel set $\fS_H\subset H$, and $K_G\leq G$ is a maximal compact subgroup satisfying:
    \begin{enumerate}
        \item $K_H\leq K_G$;
        \item The Cartan involution of $K_G$ on $G$ stabilizes $A_H$.
    \end{enumerate}
    Then there exists a Siegel set $\fS_G\subset G$ and a finite set $C\subset G$ such that $\fS_H\subset C\cdot \fS_G$. Moreover, the Siegel triple $(P_G, A_G, K_G)$ of $\fS_G$ satisfies $R_u(P_H)\leq R_u(P_G), A_H=A_G\cap H, K_H=K_G\cap H$.
\end{theorem}
\begin{remark}
    Though not needed in this paper, in \cite[Theorem 4.1]{MR3803710} the finite set $C$ can be chosen as a subset of $G(\bQ)$.
\end{remark}
To serve our purpose of applying this theorem, we need more information about the finite set $C\subset G$. We will review the necessary ingredients in the proof of \cite[Theorem 4.1]{MR3803710} below.

By \cite[Sec. 4A]{MR3803710} we may always reduce to the case that $A_H$ is split over $\bQ$. In this case, by \cite[Sec. 4B]{MR3803710} it is possible to choose a Siegel triple $(P_G, A_G, K_G)$ satisfying the conditions in Theorem \ref{T:OrrSch}.

Let $W^0:=N(A_G)/C(A_G)$ be the Weyl group associated with $A_G$ and $W\subset G$ be a set of chosen representatives\footnote{We work directly with the $\bR$-split $A_G$ here instead of choosing a $\bQ$-split conjugate of $A_G$ in \cite[Sec. 4D]{MR3803710}. This does not affect Theorem \ref{T:OrrSch}.}.   By \cite[Sec. 4F]{MR3803710}, there is a certain subset $C$ of $W$ determined only by $P_H$ and $P_G$ which is exactly the finite subset $C\subset G$ used in Theorem \ref{T:OrrSch}.

\begin{lemma}[{\cite[Proposition 4.7]{MR3803710}}]\label{L:OrrFiniteSet}
     Given $\fS_H=\Omega_HA_{H,t}K_H$ and a compatible Siegel triple $(P_G, A_G, K_G)$ of $G$ in the sense of Theorem \ref{T:OrrSch}. Let $Z=Z_G(A_H)$ and $P_Z=P_G\cap Z$, then $P_Z\leq Z$ is a parabolic subgroup. Let
    \[
    W^{\dagger}:=\{\omega\in W, \ \omega^{-1}R_u(P_H)\omega\leq R_u(P_G), \ \omega^{-1}R_u(P_Z)\omega\leq R_u(P_G)\},
    \]
    then there exists $s>0$ such that for any $\alpha\in A_{H,t}$ there exists $\lambda\in W^{\dagger}$ such that $\lambda^{-1}\alpha\lambda\in A_{G,s}$.  
\end{lemma}
% As a consequence, we may just take the finite set $C\subset G$ to be $W^{\dagger}$. In particular, we have:
% \begin{corollary}\label{C:OrrSchRefined}
%     The finite subset $C$ in Theorem \ref{T:OrrSch} is fixed as long as the pair $(P_G, A_G)$ and the representatives in $W$ are fixed. In particular, there exists a Siegel set $\fS_G\subset G$ with Siegel triple $(P_G, A_G, K_G)$ such that for any $g\in \fS_H$, there exists $c\in C$ such that $c^{-1}g\in \fS_G$.
% \end{corollary}

\begin{remark}
    In \cite{MR3803710}, the finite set $C$ is chosen as a set of $\bQ$-representatives of the Weyl elements for some $\bQ$-split torus. However, if we do not insist on the rationality, we may directly use the finite set $W^\dagger$ in Lemma \ref{L:OrrFiniteSet}.
\end{remark}

More details on the proof of \cite[Theorem 4.1]{MR3803710} and its correction \cite[Theorem 1]{MR4593766} will be recalled in Section \ref{S:finalproof} when needed. In particular, we will need a variation of Lemma \ref{L:OrrFiniteSet} which is stated and proved in Section \ref{S:Orrplus}.

%--------------------------------------
\subsection{Some technical lemmas}\label{S:Lemmas}
%--------------------------------------
In this section we provide some technical results needed for the proof of Theorem \ref{T:BKTplus}. 

By Equation \eqref{E:UL=Z}, the groups $\bfZ_\s$ and $\bfP_W$ admit compatible Levi-type extensions as \eqref{E:cptbleLeviExt}: 
\begin{equation}\label{E:cptbleLeviExt2}
\begin{tikzcd}
1 \arrow[r] \arrow[d, equal] & \bfU_\s \arrow[r] \arrow[d, hookrightarrow] & \bfZ_\s \arrow[r] \arrow[d, hookrightarrow] & \bfL_\s \arrow[r] \arrow[d, hookrightarrow] & 1 \arrow[d, equal] \\1 \arrow[r] & \bfU_W \arrow[r] & \bfP_W \arrow[r] & \bfL_W \arrow[r] & 1
\end{tikzcd}
\end{equation}
The choice of LMHS $(W,F_\mathrm{id},\s)$ and its associated semisimple element $Y$ given by \eqref{E:Y-LMHS} gives the Levi-type decompositions $Z_\s=U_\s\rtimes L_{\s}$ and $P_W=U_W\rtimes L_{W}$ with $L_{\s}\leq L_{W}$ as splittings of \eqref{E:cptbleLeviExt2}. Note that since we assume $(W,F_\mathrm{id},\s)$ splits over $\bQ$ as Corollary \ref{C:rat}, $Y$ equals to $\bfY_k$ in Theorem \ref{T:CKSSL2}, and as a consequence, these decompositions are defined over $\bQ$. 

Suppose $\mathfrak{S}_\sigma\subset L_\sigma$ is a Siegel set associated to the Siegel triple $(P_\sigma, A_\sigma, K_\sigma)$ and of the form:
\begin{equation}\label{E:SiegelLsigma}
 \fS_\s:=\Omega_\s A_{\sigma,t} K_\sigma \subset L_\s
\end{equation}
for some $t>0$, where $K_\s\leq L_\s$ is defined in Lemma \ref{L:compatibleK}, $A_\sigma$ is conjugated to some maximal $\bQ$-split torus of $P_\sigma$ and stabilized by the Cartan involution of $K_\sigma\leq L_\sigma$. For any $M\in \s$, let $K_M\leq G$ be the maximal compact subgroup containing the stabilizer of $\exp(\bi M)\cdot F_\mathrm{id}\in D$. Lemma \ref{L:compatibleK} immediately implies:
\begin{corollary}\label{C:torusstab}
    For any $M\in \sigma$, $K_\s\leq K_M$ and the subgroups $A_{\s}, L_{\s}, \exp(\bR Y), L_{W}$ of $P_W$ are all stabilized by the Cartan involution of $K_M$ on $G$.
\end{corollary}
Passing from $G$ to $\tSL(V_\bR)$ and $D$ to $X$, let $P_W'\leq \tSL(V_\bR)$ be the parabolic determined by the weight filtration $W$ on $V_\bR$, and $L_W'\leq P_W'$ be the centralizer of $Y$. Since we assume $(W,F_0)$ is a $\bQ$-split LMHS graded by $Y$ as Corollary \ref{C:rat}, the group $L_W'\leq \tSL(V)$ is defined over $\bQ$. We also let $K_M'\leq \tSL(V_\bR)$ be the stabilizer of $(\exp(\bi M)\cdot F_\mathrm{id})'\in X$. We have the following corollary as the companion of Corollary \ref{C:torusstab}:

\begin{corollary}\label{C:torusstabSL}
    For any $M\in \sigma$, $K_\s\leq K_M'$ and the subgroups $A_{\s}, L_{\s}, \exp(\bR Y), L_{W}'$ of $P_W'$ are all stabilized by the Cartan involution of $K_M'$ on $\tSL(V_\bR)$.
\end{corollary}
The next important observation is:
\begin{lemma}\label{L:centralizer}
    We have $\mathfrak{z}_\sigma\subset \mathfrak{p}_\cap$ and $\mathfrak u_\sigma\subset \mathfrak{u}_\cap$. Moreover, the image of \(A_\sigma\) in the reductive quotient $P_\cap/U_\cap$ is a \(\bQ\)-split torus. 
\end{lemma}
\begin{proof}
    It follows directly from (1) of Theorem \ref{T:CKSSL2} and the fact that for any $1\leq l\leq k$, 
    $\fz(\sum_{1\leq j\leq l}N_j)\subset \oplus_{d_l\leq 0}\fg^{d_1,...,d_k}=W_l^{\leq 0}(\fg)$. The statement $\mathfrak u_\sigma\subset \mathfrak{u}_\cap$ follows from the fact $\mathfrak u_\s\subset \mathfrak{u}_W\cap \mathfrak{p}_\cap$.
\end{proof}
By the properties of Siegel sets (See Section \ref{S:appendix}), we may assume $\mathfrak{Z}_\sigma\subset Z_\s$ in Corollary \ref{C:fund-set} takes the form 
\begin{equation}\label{E:FundSetCent}
\mathfrak{Z}_\sigma=C_\s\cdot \wt{\Omega}_\sigma A_{\sigma,t} K_\sigma, \ \wt{\Omega}_\sigma=\Omega_\sigma'\cdot \Omega_\sigma 
\end{equation}

 where both $\Omega_\sigma'\subset U_\sigma$ and $\Omega_\sigma\subset P_\sigma$ are compact, and $A_{\sigma,t}$ is determined by \eqref{E:PosCone} for some fixed $t>0$, and $C_\s\subset Z_\s$ is a finite subset. In other words, 
 \[
 \mathfrak{Z}_\sigma=C_\s\cdot\Omega_\sigma'\cdot \fS_\s
 \]
 where $\fS_\s$ is defined as \eqref{E:SiegelLsigma}. We will show the following lemma:
\begin{lemma}\label{L:ChoosingSiegel}
    In Theorem \ref{T:BKTrefined} we may choose the ordered basis $(\mathcal{E}, \preceq)$ to satisfy the condition: The Siegel triple $(P, A_{\varphi'}, K_{\varphi'})$ associated to $\fS\subset \tSL(V_\bR)$ satisfies:
    \begin{enumerate}
    \item $U_\sigma\rtimes R_u(P_\sigma)\subset R_u(P)$,
    \item $A_\sigma= A_{\varphi'}\cap L_\s$,
    \item $K_\s=K_{\varphi'}\cap L_\s$ and the Cartan involution of $K_{\varphi'}$ stabilizes $A_\s$.
    \end{enumerate}
    In particular, the parabolic $P\leq \tSL(V_\bR)$ can be chosen to be defined over $\bQ$.  
\end{lemma}

\begin{proof}
    According the construction of the order $\preceq$, we have $P_\cap\leq P$ and $U_\cap\leq R_u(P)$. Lemma \ref{L:centralizer} and the choice of $(\cE, \preceq)$ in Theorem \ref{T:BKTrefined} immediately imply $\mathfrak{u}_\sigma\subset \mathfrak{u}_\cap\subset \mathrm{Lie}(R_u(P))$. Moreover, for any $(d_1,...,d_k)\in \bZ^k$, the space
    \[
    \mathrm{Gr}^{d_1,...,d_k}V:=\frac{\bigcap_{1\leq j\leq k}W_j^{\leq d_j}V}{\sum_{1\leq i\leq k}(W_i^{\leq d_i-1}V\cap\bigcap_{j\neq i}W_j^{\leq d_j}V)}
    \]
    is defined over $\bQ$, and  the image of $\mathfrak{a}_\sigma$ in $\mathfrak{p}_\cap/\mathfrak{u}_\cap$ acts on $\mathrm{Gr}^{d_1,...,d_k}V$ as a $\bQ$-split torus. It is possible to choose an ordered $\bQ$-basis $\mathcal{E}^{d_1,...,d_k}_0$ of $\mathrm{Gr}^{d_1,...,d_k}V$ diagonalizing this action, and the image of $R_u(P_\sigma)$ in $\mathfrak{p}_\cap/\mathfrak{u}_\cap$ acts through the upper-triangular group determined by $\mathcal{E}^{d_1,...,d_k}_0$.
    
    Since every $\mathcal{E}^{d_1,...,d_k}_0\subset \mathrm{Gr}^{d_1,...,d_k}V$ has a unique lift $\mathcal{E}^{d_1,...,d_k}\subset V^{d_1,...,d_k}$, we may take the union of all such $\mathcal{E}^{d_1,...,d_k}$ to obtain an ordered basis $\mathcal{E}$ subject to the relation $\preceq$. This $\mathcal{E}$ satisfies the required conditions in this lemma and Theorem \ref{T:BKTrefined} and hence establishes (1). (2) is clear from the construction of $\cE$ and (3) follows from Corollary \ref{C:torusstabSL}. By construction, $\mathcal{E}$ is $P$-conjugate to a basis in $V_\bQ$ as each $\mathcal{E}^{d_1,...,d_k}$ is $U_\cap$-conjugate to elements in $V_\bQ$, hence $P$ itself must be defined over $\bQ$.
\end{proof}
For any Siegel set $\fS=\Omega A_{\varphi',t'}K_{\varphi'}$ in $\tSL(V_\bR)$ with Siegel triple $(P,A_{\varphi'}, K_{\varphi'})$, and any $\psi'\in E^+_\fS$ as \eqref{E:coverbysiegel}, we may write $\psi'=u_\psi a_\psi\cdot \varphi'$ for some $u_\psi\in\Omega \subset R_u(P)$ and $a_\psi\in A_{\varphi',t'}$.
\begin{lemma}\label{L:UnipCent}
    We have $u_\psi\in L_W'$ and $u_\psi$ centralizes $A_\s$.
\end{lemma}

\begin{proof}
    By Lemma \ref{L:compatibleK}, for any $\psi'\in E^+_\fS$ the Cartan involution of $K_{\psi'}$ acts on $Y$ as multiplying by $-1$, hence $u_\psi$ must centralize $Y$ hence lie in $L_W'$. The unique maximal split torus in $P$ stabilized by the Cartan involution of $K_{\psi'}$ is $A_{\psi'}=u_\psi A_{\varphi'}u_\psi^{-1}$. By Lemma \ref{L:compatibleK} we have $A_\s\subset A_{\varphi'}\cap A_{\psi'}$ as $A_\s$ is stabilized by the Cartan involution of $K_{\psi'}$ for any $\psi'\in E^+_\fS$. Since $u_\psi\in R_u(P)$, $A_{\varphi'}\cap u_\psi A_{\varphi'}u_\psi^{-1}$ must lie in the centralizer of $u_\psi$.   Indeed, if \(a\in A_{\varphi'}\cap u_\psi A_{\varphi'}u_\psi^{-1}\), then
\(a=u_\psi b u_\psi^{-1}\) for some \(b\in A_{\varphi'}\). Comparing the
Levi and unipotent factors in \(P=R_u(P)\rtimes A_{\varphi'}\)  gives
\(a=b\) and \(a u_\psi a^{-1}=u_\psi\). The claim follows.
\end{proof}

%------------------------------
\subsection{An alternative Orr-type theorem}\label{S:Orrplus}
%------------------------------

%\colleen{This seems like a general result -- not specific to the particular situation that we care about.  I think the overall proof would be easier to follow (and least for me - there is a lot of notation), if these general results were all grouped together.  It's hard for me to keep track of what is going on when the discussion (and notation) switch back and forth between these general type results, and the arguments that are specific to the situation we are interested in.  One possibility is to put all the all the general results, including the review of, and variations on, Orr's work, in an appendix.}

Except for preparations in Section \ref{S:Lemmas}, we will need a modified version of Lemma \ref{L:OrrFiniteSet}. Under the settings of Theorem \ref{T:OrrSch} and Lemma \ref{L:OrrFiniteSet}, suppose for some $t,t'>0$, we have 
\[
\fS_H=\Omega_HA_{H,t}K_H, \ \fS_G=\Omega_GA_{G,t'}K_G
\]
with respect to Siegel triples $(P_H, A_H, K_H)$ and $(P_G, A_G, K_G)$.
\begin{theorem}\label{T:Orrplus}
    There exists $s>0$ depending only on $H\leq G, t,t'$ such that for any $a=a_Ha_G\in A_{H,t} A_{G,t'}$, there exists $\omega\in W^\dagger$ in Lemma \ref{L:OrrFiniteSet} such that $\omega^{-1} a\omega\in A_{G,s}$.
\end{theorem}

\begin{proof}
The main idea of the proof is similar to \cite[Prop. 4.7]{MR3803710}, but we also apply necessary modifications. Let $\Phi_G=\Phi(G,A_G), \Phi_G^+, \Delta_G$ be the sets of relative roots, positive roots, and simple roots determined by
\((P_G,A_G)\). Let
\[
\Phi_H^+:=\Phi(A_H,P_H)
\]
be the set of positive \(A_H\)-weights occurring in \(\Lie(U_H)\).

Define 
\[
\Theta
:=
\left\{
\lambda\in\Phi_G^+:
\lambda|_{A_H}\in \Phi_H^+\cup\{0\}
\right\}.
\]
Since \(\Theta\subset \Phi_G^+\), the set \(\Theta\) is
\(\bR_{>0}\)-independent; In particular, this means for any subset $\{\lambda_1,...,\lambda_n\}\subset \Theta$, if there exist non-negative numbers $
\alpha_1,...,\alpha_n$ with $\sum_i \alpha_i\chi_i=0$, then we must have all $\alpha_i=0$. 

We first prove a uniform lower bound for the roots in \(\Theta\) on
\(A_{H,t} A_{G,t'}\).     Let
\[
a=a_Ha_G\in A_{H,t} A_{G,t'}
\]
and let \(\lambda\in\Theta\). If
\[
\lambda|_{A_H}\in\Phi_H^+,
\]
then \(\lambda|_{A_H}\) is a positive \(H\)-root. Since
\(a_H\in A_{H,t}\), there is a constant \(\eta_H(t)>0\), depending only
on \(t\) and the root data of \(H\), such that
\[
\lambda(a_H)=(\lambda|_{A_H})(a_H)\ge \eta_H(t).
\]
Also, since \(\lambda\in\Phi_G^+\) and \(a_G\in A_{G,t'}\),
there is a constant \(\eta_G(t')>0\), depending only on \(t'\) and the
root data of \(G\), such that
\[
\lambda(a_G)\ge \eta_G(t').
\]
Therefore
\[
\lambda(a)=\lambda(a_Ha_G)
=
\lambda(a_H)\lambda(a_G)
\ge
\eta_H(t)\eta_G(t').
\]
If instead $\lambda|_{A_H}=0$, then $\lambda(a_H)=1$ and again
\[
\lambda(a_G)\ge \eta_G(t').
\]
Thus in all cases there exists a number $\eta>0$ depending only on \(H\leq G\) and on \(t,t'\), such that $\lambda(a)\ge \eta$
for all $a\in A_{H,t} A_{G,t'}$ and all $\lambda\in\Theta$.

Put $\eta_0:=\min(1,\eta)$. For any \(a\in A_{H,t} A_{G,t'}\), choose a subset $\Psi_a\subset \Phi_G$ maximal subject to the following two conditions: $\Theta\cup\Psi_a$
is $\bR_{>0}$-independent, and $\psi(a)\ge \eta_0$ for every $\psi\in\Psi_a$.  Such a maximal set exists because \(\Theta\) is already $\bR_{>0}$-independent. 

Choose an additive order \(>_a\) on
\(X^*(A_G)_\mathbf R\) such that every element of $\Theta\cup\Psi_a
$ is positive with respect to this order. Define
\[
\Phi_a^+
:=
\{\chi\in\Phi_G:\chi>_a0\},
\]
then \(\Phi_a^+\) is a positive system of \(\Phi_G\). Hence there exists $\omega = \omega_a \in W_G$ such that $\Phi_a^+=\omega\Phi_G^+$. Since \(\Theta\subset\Phi_a^+\), we have $\omega^{-1}\Theta\subset\Phi_G^+$. 

We claim that every \(\chi\in\Phi_a^+\) is a non-negative real combination of
elements of \(\Theta\cup\Psi_a\). If \(\chi\in\Theta\cup\Psi_a\), there is
nothing to prove. Suppose $\chi\notin\Theta\cup\Psi_a$. Since all elements of
\[
\Theta\cup\Psi_a\cup\{\chi\}
\]
are positive for the same order \(>_a\), this enlarged set is still
\(\bR_{>0}\)-independent. If $\chi(a)\ge \eta_0$, then \(\chi\) could be added to \(\Psi_a\), contradicting maximality.
Thus $\chi(a)<\eta_0$.

Since \(0<\eta_0\le 1\), we get
\[
(-\chi)(a)=\chi(a)^{-1}>\eta_0^{-1}\ge \eta_0.
\]
Hence \(-\chi\) satisfies the same value condition. By maximality of
\(\Psi_a\), the set $\Theta\cup\Psi_a\cup\{-\chi\}$ cannot be \(\bR_{>0}\)-independent. Therefore there is a linear relation
\[
c(-\chi)+\sum_i c_i\tau_i=0, \ c,c_i>0, \ \tau_i\in\Theta\cup\Psi_a
\]
which implies $\chi=\sum_i \frac{c_i}{c}\tau_i$. This proves the claim.

Since the root system \(\Phi_G\) is finite, there exists a constant
\(M>0\) depending only on \(\Phi_G\), such that every
\(\chi\in\Phi_a^+\) can be written as
\[
\chi=\sum_i b_i\tau_i,
\
b_i\ge0,\ \tau_i\in\Theta\cup\Psi_a,
\]
with $\sum_i b_i\le M$. For every \(\tau_i\in\Theta\cup\Psi_a\), we have $\tau_i(a)\ge \eta_0$. Therefore
\[
\chi(a)
=
\prod_i \tau_i(a)^{b_i}
\ge
\eta_0^{\sum_i b_i}
\ge
\eta_0^M.
\]
Set $s:=\eta_0^M>0$, then $\chi(a)\ge s$ for every $\chi\in\Phi_a^+$. Let \(\alpha\in\Delta_G\). Since $\Phi_a^+=\omega\Phi_G^+$, we have $\omega\alpha\in\Phi_a^+$. Therefore
\[
\alpha(\omega^{-1}a\omega)
=
(\omega\alpha)(a)
\ge s.
\]
Since this holds for every simple root \(\alpha\in\Delta_G\), we obtain $\omega^{-1}a\omega\in A_{G,s}$.  

It remains to check that \(\omega\in W^\dagger\). Since $\omega^{-1}\Theta\subset\Phi_G^+$, every root in \(\Theta\) is sent by \(\omega^{-1}\) to a positive
root. We consider \(U_Z\) first. The roots of \(U_Z\) are exactly the positive
ambient roots \(\lambda\in\Phi_G^+\) satisfying $\lambda|_{A_H}=0$ which lie in \(\Theta\), hence
\[
\omega^{-1}U_Z\omega\leq U_G.
\]
Next consider \(U_H\). Let \(X\in\Lie(U_H)\), and decompose \(X\) into
\(A_G\)-root components:
\[
X=\sum_\lambda X_\lambda,
\
X_\lambda\in\mathfrak g_\lambda.
\]
If \(X_\lambda\ne0\), then its \(A_H\)-weight is \(\lambda|_{A_H}\).
Because \(X\in\Lie(U_H)\), this restriction lies in \(\Phi_H^+\), thus $\lambda\in\Theta$. Since \(\omega^{-1}\Theta\subset\Phi_G^+\), all the $G$--root components of
\(\operatorname{Ad}(\omega^{-1})X\) lie in positive \(G\)-root spaces.
Thus
\[
\operatorname{Ad}(\omega^{-1})\Lie(U_H)\subset \Lie(U_G),
\]
or equivalently $\omega^{-1}U_H\omega\leq U_G$ and therefore $\omega\in W^\dagger$. The proof is complete.  
\end{proof}

%------------------------------
\subsection{Proof of Theorem \ref{T:BKTplus}}\label{S:finalproof}
%------------------------------

We are ready to prove Theorem \ref{T:BKTplus}. Fix a Siegel set $\fS_\s=\Omega_\s A_{\s,t} K_\s\subset L_\s$ with Siegel triple $(P_\s, A_\s, K_\s)$ as Equation \eqref{E:SiegelLsigma}, an ordered basis $(\cE, \preceq)$ of $\mathrm{SL}(V_\bR)$, and a Siegel set $\fS\subset \mathrm{SL}(V_\bR)$ with Siegel triple $(P, A_{\varphi'}, K_{\varphi'})$ corresponding to the ordered basis $(\cE, \preceq)$ satisfying all conditions in Theorem \ref{T:BKTrefined} and Lemma \ref{L:ChoosingSiegel}.

Since $\fS_\s\subset P_W'$ and $E^+_\fS\subset P_W'\cdot \varphi'$ by Lemma \ref{L:UnipCent}, we have 
\[
\fS_\s\cdot E^+_\fS\subset P_W'\cdot\varphi'\subset X.
\] 
To prove Theorem \ref{T:BKTplus}, it is enough to show $\fS_\s\cdot E^+_\fS$ is contained in finitely many generalized Siegel sets in $P_W'\cdot\varphi'$ in the sense of Lemma \ref{L:GeneralizedsiegelpropertiesD}. Indeed, it is enough to show the following theorem:
\begin{theorem}\label{T:BKTplus2}
     There exists a finite subset $C\subset \mathrm{SL}(V_\bR)$ and a Siegel set $\wh{\fS}=\wh{\Omega} A_{\varphi', s}K_{\varphi'}$ with respect to the Siegel triple $(P,A_{\varphi'}, K_{\varphi'})$ for some $s>0$, such that for any $\psi'\in E^+_\fS$ we have
    \[
    \fS_\s\cdot \psi'\subset C\cdot\wh{\fS}\cdot\varphi'.
    \]
    In particular, we may choose $C\subset K_{\varphi'}$ and normalizes $A_{\varphi'}$.
\end{theorem}

%-----------------------------
\subsubsection{Proof of Theorem \ref{T:BKTplus} assuming Theorem \ref{T:BKTplus2}}
%-----------------------------

For any $\psi'\in E^+_\fS$ let $K_{W,\psi'}'\leq L_W'$ be the stabilizer of $\psi'$. Note the fact that $A_{\varphi'}$ is a maximal $\bR$-split torus in either $\tSL(V_\bR)$ or $L_W'$,  it follows from Corollary \ref{C:torusstabSL} and Theorem \ref{T:BKTrefined} that we have:
\begin{corollary}\label{C:SLtoLW}
   For the Siegel triple $(P,A_{\varphi'}, K_{\varphi'})$ of $\tSL(V)$, the triple $(P_{L_W'}:=P\cap L_W', A_{\varphi'}, K_{W,\varphi'}:=K_{\varphi'}\cap L_W')$ is a Siegel triple of $L_W'$ compatible with $(P_\s, A_\s, K_\s)$ in the sense of Theorem \ref{T:BKTrefined}. 
\end{corollary}
Corollary \ref{C:SLtoLW} together with \cite[Lemma 7.5]{MR0147566} imply:
\begin{lemma}\label{L:SiegelContain}
    For any Siegel set $\fS\subset\tSL(V_\bR)$ with respect to the Siegel triple $(P,A_{\varphi'}, K_{\varphi'})$, there exist finitely many Siegel sets $\{\fS_{W,i}\}$ of $L_W'$ with respect to the Siegel triple $(P_{L_W'}, A_{\varphi'}, K_{W,\varphi'})$ such that
    \[
    \fS\cap L_W'\subset\bigcup_i \fS_{W,i}
    \]
    and
    \[
    (\fS\cdot \varphi')\cap (L_W'\cdot\varphi')\subset\bigcup_i (\fS_{W,i}\cdot\varphi').
    \]
\end{lemma}
By Lemma \ref{L:UnipCent} and \ref{L:SiegelContain}, Theorem \ref{T:BKTplus2} implies there exists a finite collection of Siegel sets $\{\fS_{W,j}\}$ of $L_W'$ with respect to the Siegel triple $(P_{L_W',j}, A_{\varphi'}, K_{W,\varphi'}$) such that for any $\psi'\in E^+_\fS$,
\begin{eqnarray*}
\fS_\s\cdot\psi'&=&\fS_\s\cdot u_\psi a_\psi\cdot\varphi'\\
&\subset& C\cdot \wh{\fS}\cdot\varphi'\cap L_W'\cdot\varphi'\\
&\subset& \bigcup_j (\fS_{W,j}\cdot \varphi'),
\end{eqnarray*}
where the last containment follows from the statement that $C\subset K_{\varphi'}\cap N_{
\tSL(V_\bR)}(A_{\varphi'})$ which implies $c\cdot \wh{\fS}$ is a Siegel set with respect to the Siegel triple $(cPc^{-1},A_{\varphi'}, K_{\varphi'})$ for any $c\in C$. 

It follows that from Equation \eqref{E:FundSetCent}, there is a finite subset $C_\s\subset L_\s$ such that
\begin{eqnarray}\label{E:Siegelcontain2}
\bigcup_{\psi'\in E^+_\fS}\mathfrak{Z}_\s\cdot \psi'&=& \bigcup_{\psi'\in E^+_\fS}C_\s\cdot \Omega_\s'\cdot \fS_\s\cdot\psi'\\
\nonumber &\subset&C_\s\cdot\bigcup_j(\Omega_\s'\cdot\fS_{W,j}\cdot\varphi').
\end{eqnarray}
According to Lemma \ref{L:GeneralizedsiegelpropertiesQ} and Lemma \ref{L:GeneralizedsiegelpropertiesD}, the RHS of Equation \eqref{E:Siegelcontain2} is independent of the choice of $\psi'\in E^+_\fS$ and is contained in a finite union of generalized Siegel sets in $P_W'\cdot\varphi'$. By taking the preimage under the Hodge metric map \eqref{E:Hodgemetric}, the same arguments as \cite[Sec. 4.5]{MR4155216} and \cite[Lemma 7.5]{MR0147566} imply
\[
\bigcup_{\psi'\in E^+_\fS}\mathfrak{Z}_\s\cdot \psi\subset P_W\cdot \varphi=U_W\cdot (L_W\cdot \varphi)
\]
is contained in finitely many generalized Siegel sets of $P_W\cdot \varphi$. For any Siegel set $\fS$ appearing in Theorem \ref{T:BKTrefined}, in Theorem \ref{T:BKTplus} we may take each $R_k$ and $B_{\s,k}$ to be the form 
\[
(\exp(\bi R_k)\cdot F_\mathrm{id})'=E^+_\fS, \  \mathfrak{B}_{\s,k}=\mathfrak{Z}_\s\cdot F_\mathrm{id} 
\]
where $\fS_\s$ takes the form as Equation \eqref{E:FundSetCent}, then 
\[
\exp(\bi R_k)\cdot\mathfrak{B}_{\s,k}=\bigcup_{\psi'\in E^+_\fS} \mathfrak{Z}_\s\cdot \psi\subset P_W\cdot\varphi 
\]
is contained in finitely many generalized Siegel sets in $P_W\cdot\varphi$. Theorem \ref{T:BKTplus} follows as there are only finitely many such Siegel sets $\fS$ by Theorem \ref{T:BKTmain}.

%-----------------------------
\subsubsection{Proof of Theorem \ref{T:BKTplus2}}
%-----------------------------

We first show the following lemma which will be proved at the end of this section:
\begin{lemma}\label{L:BKTplusLemma}
    There exists a Siegel set $\wh{\fS}=\wh{\Omega} A_{\varphi', s}K_{\varphi'}\subset \mathrm{SL}(V_\bR)$ with the same Siegel triple $(P,A_{\varphi'}, K_{\varphi'})$ as $\fS$ in Theorem \ref{T:BKTrefined} such that 
    \begin{equation}\label{E:Siegelcontainment}
    \Omega_\s A_{\s,t}\subset C\cdot\wh{\Omega}\cdot A_{\varphi', s}\cdot a_\psi^{-1}u_\psi^{-1}K_{\psi'}
    \end{equation}
    for any $\psi'=u_\psi a_\psi \cdot\varphi'\in E^+_\fS$.
\end{lemma}

Let $\fC\leq \mathrm{SL}(V_\bR)$ be the subgroup centralizing $A_\s$. By Corollary \ref{C:rat}, $A_\s$ is $P_\s$-conjugated to a maximal $\bQ$-split torus in $\bfL_\s$. It follows that $\fC\leq \tSL(V_\bR)$ is an $\bR$-reductive subgroup $P_\s$-conjugated to (the $\bR$-points of) some $\bQ$-reductive subgroup $\fC_\bQ$ of $\tSL(V)$. Following the constructions in \cite[Sec. 4B]{MR3803710}, we have:
 \begin{lemma}\label{L:ExtendedCentralizer}
      We have $A_{\psi'}\leq \fC$ and $P_\fC:=P\cap \fC$, $K_{\fC,\psi'}:=K_{\psi'}\cap\fC$ give a Siegel triple $(P_\fC, A_{\psi'}, K_{\fC,\psi'})$ of $\fC$\footnote{We may define Siegel triples of the $\bR$-reductive $\fC$ using the conjugates of Siegel triples in $\fC_\bQ$.}. Moreover, suppose $P_\s=R_u(P_\s)\cdot A_\s\cdot M_\s$ is the Langlands decomposition of $P_\s$, then $M_\s\leq \fC$ is a subgroup stabilized by the Cartan involution of $K_{\fC,\psi'}$ on $\fC$ and $K_{\psi'}$ on $\mathrm{SL}(V_\bR)$.
 \end{lemma}
 As a consequence, we may choose bounded $\Omega_{\s,u}\subset R_u(P_\s), \ \Omega_{\s,A}\subset A_\s, \ \Omega_{\s,M}\subset M_\s$ such that\[\Omega_\s\subset\Omega_{\s,u}\cdot \Omega_{\s,A}\cdot \Omega_{\s,M}.\] 
 Note that by Theorem \ref{T:OrrSch}, $\Omega_{\s,u}\cdot \Omega_{\s,A}\subset P$, but $\Omega_{\s,M}$ (hence $\Omega_\s$) need not be a subset of $P$ in general. However, since $\Omega_{\s,A}\cdot \Omega_{\s,M}\subset \fC$, we have the following lemma:

 \begin{lemma}\label{L:UniformCptCover}
     There exists a bounded $\Omega_{P,\fC}\subset P_\fC$ such that $\Omega_{\s,A}\cdot \Omega_{\s,M}\subset \Omega_{P,\fC}\cdot K_{\fC,\psi'}$ for any $\psi'=u_\psi a_\psi\cdot \varphi'\in E^+_\fS$.
 \end{lemma}

\begin{proof}
 Since $A_{\psi'}\leq \mathrm{SL}(V_\bR)$ is a maximal $\bR$-split torus stabilized by the Cartan involution of $K_{\fC,\psi'}$ on $\fC$ and $K_{\psi'}$ on $\mathrm{SL}(V_\bR)$, by Lemma \ref{L:ExtendedCentralizer} we have $A_{M_\s}:=A_{\psi'}\cap M_\s$ is a maximal $\bR$-split torus of $M_\s$ and there is a minimal $\bR$-parabolic subgroup $P_{M_\s}$ of $M_\s$ whose split torus is $A_{M_\s}$  with $R_u(P_{M_\s})\leq R_u(P)$. Moreover, note that by Lemma \ref{L:compatibleK}, $K_{M_\s}:=K_{\fC,\psi'}\cap M_\s$ is independent of the choice of $\psi'\in E^+_\fS$ (Indeed, $K_{M_\s}=K_\s\cap M_\s$).
 
 Therefore, $M_\s$ admits the Iwasawa decomposition $M_\s=R_u(P_{M_\s})\cdot A_{M_\s}\cdot K_{M_\s}$ and we may choose a compact subset $\Omega_{P,\fC}\subset P_\fC$ such that $\Omega_{P,\fC}$ contains $\Omega_{\s,A}$ and $(\Omega_{P,\fC}\cap M_\s)\cdot K_{M_\s}$ contains $\Omega_{\s,M}$. Note that the choice of $\Omega_{P,\fC}$ does not depend on the choice of $\psi'\in E^+_\fS$. The lemma follows.
 \end{proof}

 Next let $W$ be a fixed set of Weyl representatives for $A_{\varphi'}\leq P$. By \cite[Lemma 4.10]{MR3803710} we may assume the representatives in $W$ lie in $K_{\varphi'}$\footnote{We do not use multiple representatives $\omega_\bQ', \omega_\bQ, \omega_K$ as \cite[Sec. 4D-4F]{MR3803710} but just $\omega_K$. By \cite[Lemma 4.11]{MR3803710}, this change does not affect our arguments for $\omega\in W^\dagger$.}. For the compatible Siegel triples $(P_\s, A_\s, K_\s)$ and $(P, A_{\varphi'}, K_{\varphi'})$ associated to the reductive pair $L_\s\leq \tSL(V)$, let $W^\dagger\subset W$ be the subset as Lemma \ref{L:OrrFiniteSet}. For any $\psi'=u_\psi a_\psi\cdot \varphi'$, let $\omega_{\psi'}:=u_\psi a_\psi\omega a_\psi^{-1}u_\psi^{-1}\in K_{\psi'}$ and $W_{\psi'}:=u_\psi a_\psi Wa_\psi^{-1}u_\psi^{-1}\subset K_{\psi'}$.

 % \begin{lemma}
 % For any $\psi=u_\psi a_\psi\cdot \varphi'\in E^+_\fS$, the set $W^\dagger_{\psi'}:=\mathrm{Ad}_{u_\psi a_{\psi}}W^{\dagger}$ satisfies
 %     \[
 %    W^{\dagger}_{\psi'}=\{\omega_{\psi'}\in W_{\psi'}, \ \omega_{\psi'}^{-1}R_u(P_\s)\omega_{\psi'}\leq R_u(P), \ \omega_{\psi'}^{-1}R_u(P_\mathfrak{C})\omega_{\psi'}\leq R_u(P)\}.
 %    \] 
 % \end{lemma}
 % \begin{proof}
 %     It suffices to show $\mathrm{Ad}_{\omega_{\psi'}}(R_u(P))$ contains both $R_u(P_\s)$ and $R_u(P_\mathfrak{C})$. Note that
 %     \begin{eqnarray*}
 %         \mathrm{Ad}_{\omega_{\psi'}}(R_u(P))&=&u_{\psi}a_{\psi}\omega a_\psi^{-1}u_\psi^{-1} R_u(P)u_{\psi}a_{\psi}\omega^{-1}a_\psi^{-1}u_\psi^{-1}\\
 %         &=&(u_{\psi}a_{\psi})(\omega R_u(P)\omega^{-1})(u_{\psi}a_{\psi})^{-1}.
 %     \end{eqnarray*}
 %     By Lemma \ref{L:UnipCent} and the definition of $W^\dagger$ in Lemma \ref{L:OrrFiniteSet}, $u_\psi\in \omega R_u(P)\omega^{-1}$, hence $u_{\psi}a_{\psi}\in \omega P\omega^{-1}$ and normalizes $\omega R_u(P)\omega^{-1}$. The lemma follows again from the definition of $W^\dagger$ in Lemma \ref{L:OrrFiniteSet} in which we set $G=\tSL(V), H=L_\s$ and $Z=\fC$.
 % \end{proof}

 Let $\Omega_{P,\fC}\subset \fC$ be the bounded subset as in Lemma \ref{L:UniformCptCover}. From the Langlands decomposition $P_\fC=R_u(P_\fC)\cdot A_{\psi'}\cdot M_{\psi'}$, let 
 \[
 \Omega_{u,\fC}(\psi')\subset R_u(P_\fC), \ \Omega_{A,\fC}(\psi')\subset A_{\psi'}, \ \Omega_{M,\fC}(\psi')\subset M_{\psi'}
 \]
 be the projection of $\Omega_{P,\fC}$ on each factor, and let
\begin{equation}\label{E:LangDecomp}
 \widehat{\Omega_{P,\fC}}(\psi'):=\Omega_{u,\fC}(\psi')\cdot \Omega_{A,\fC}(\psi')\cdot \Omega_{M,\fC}(\psi')\subset P_\fC.
\end{equation}
Since by Lemma \ref{L:UnipCent}, $A_{\psi'}\cdot M_{\psi'}=\mathrm{Ad}_{u_\psi}(A_{\varphi'}\cdot M_{\varphi'})$
 for some $u_\psi$ lying in some bounded subset of $R_u(P_\fC)$, we may take some $\widehat{\Omega_{P,\fC}}\subset P_\fC$ containing $\widehat{\Omega_{P,\fC}}(\psi')$ for all $\psi'\in E^+_\fS$. Combining Lemma \ref{L:UniformCptCover} and Equation \eqref{E:LangDecomp} we have:
\begin{equation}\label{E:CoverLeviPart}
\Omega_{\s,A}\cdot \Omega_{\s,M}\subset \Omega_{P,\fC}\cdot K_{\fC,\psi'}\subset \widehat{\Omega_{P,\fC}}(\psi')\cdot K_{\fC,\psi'}\subset \widehat{\Omega_{P,\fC}}\cdot K_{\fC,\psi'}.
\end{equation}
 Next for any $\omega\in W^\dagger$ we define:
 \begin{equation}\label{E:CPTFactor}
\Omega_P(\psi',\omega):=\omega^{-1}\Omega_{\s,u}\cdot\widehat{\Omega_{P,\fC}}\cdot\omega_{\psi'}.
 \end{equation}
Notice that $\omega^{-1}\omega_{\psi'}=\omega^{-1}u_\psi a_\psi \omega a_\psi^{-1}u_\psi^{-1}$ and the facts that $u_\psi\in \fC$ and $\omega\in W^\dagger$, by the definition of $W^\dagger$ in Lemma \ref{L:OrrFiniteSet} we know $\Omega_P(\psi',\omega)$ is a bounded subset of $P$ (but depends on the choice of $\psi'\in E^+_\fS$). Moreover, by Equations \eqref{E:CoverLeviPart} and \eqref{E:CPTFactor} we have for any $\omega\in W^\dagger$,
 \begin{eqnarray}\label{E:HtoG}
     \nonumber \omega^{-1}\Omega_\s \cdot A_{\s,t}&\subset& \Omega_P(\psi',\omega)\cdot\omega_{\psi'}^{-1}\cdot A_{\s,t} \cdot K_{\fC,\psi'}\\
     &=& \Omega_P(\psi',\omega)(\omega_{\psi'}^{-1}\cdot A_{\s,t}\cdot\omega_{\psi'})(\omega_{\psi'}^{-1}\cdot K_{\fC,\psi'})\\
     \nonumber&\subset&\Omega_P(\psi',\omega)(\omega_{\psi'}^{-1}\cdot A_{\s,t}\cdot\omega_{\psi'})\cdot K_{\psi'}.
\end{eqnarray}

\begin{lemma}\label{L:ChoosingSiegel2}
    It is possible to choose compact $\wh{\Omega}\subset P$ and $A_{\varphi',s}$ for some $s>0$ such that for any $\psi'=u_\psi a_\psi\cdot \varphi'\in E^+_\fS$ and any $\mu\in \Omega_\s, a_\s\in A_{\s,t}$, there exists some $\lambda\in W^\dagger$ such that
    \[
    \lambda^{-1}\mu a_\s\in \wh{\Omega}\cdot A_{\varphi',s}\cdot a_\psi^{-1}u_\psi^{-1} K_{\psi'}.
    \]
\end{lemma}
\begin{proof}
    Apply Theorem \ref{T:Orrplus} to the pair $\bfL_\s\leq \tSL(V)$ and the corresponding Siegel sets $\fS_\s\subset L_\s, \fS\subset \tSL(V_\bR)$ we know there exists some $s>0$ such that for any $\mu a_\s\in \Omega_\s\cdot A_{\s,t}$ and $\psi'\in E^+_\fS$ with $a_\psi\in A_{\varphi',t'}$, there exists some $\lambda\in W^\dagger$ such that 
    \begin{equation}\label{E:OrrElement}
        \lambda^{-1}a_\s a_\psi\lambda\in A_{\varphi',s}.
    \end{equation}
    We also denote $\lambda_{\psi'}$ as the corresponding $\lambda$-conjugate in $W_{\psi'}^\dagger$. 
    
    Note that by Lemma \ref{L:UnipCent}, both \(u_\psi\) and \(a_\psi\) commute with \(a_\sigma\). By Equations \eqref{E:CPTFactor} and \eqref{E:HtoG} we have:
    \begin{eqnarray}\label{E:factorization}
        \lambda^{-1}\mu a_\s 
        &\in&\Omega_P(\psi',\lambda)\cdot (\lambda_{\psi'}^{-1}\cdot a_\s\cdot\lambda_{\psi'})\cdot K_{\psi'}\\
        \nonumber&=&(\lambda^{-1}\cdot\Omega_{\s,u}\cdot\widehat{\Omega_{P,\fC}}) \cdot a_\s\cdot\lambda_{\psi'}\cdot K_{\psi'}\\
        \nonumber&=&(\lambda^{-1}\cdot\Omega_{\s,u}\cdot\widehat{\Omega_{P,\fC}}\cdot \lambda)\cdot (\lambda^{-1}\cdot a_\s\cdot\lambda_{\psi'})\cdot K_{\psi'}\\
        \nonumber&=&\Omega_P(\varphi',\lambda)\cdot(\lambda^{-1} \cdot a_\s\cdot(u_\psi a_\psi \lambda a_\psi^{-1}u_\psi^{-1}))\cdot K_{\psi'}\\
        \nonumber&=&\Omega_P(\varphi',\lambda)\cdot(\lambda^{-1}u_\psi\lambda)\cdot(\lambda^{-1} a_\s a_\psi \lambda) \cdot a_\psi^{-1}u_\psi^{-1}K_{\psi'}.
    \end{eqnarray}
By Lemma \ref{L:OrrFiniteSet} and \ref{L:UnipCent}, since $\lambda\in W^\dagger$ and $u_\psi\in R_u(P_\mathfrak{C})$, we know the set 
\[\Omega_P(\varphi',\lambda)\cdot (\lambda^{-1}u_{\psi}\lambda)\] 
is a bounded subset of $P$. Since \(u_\psi\) is the unipotent coordinate of a point of the fixed Siegel
set \(\fS=\Omega A_{\varphi',t'}K_{\varphi'}\), the elements \(u_\psi\) range in a bounded subset of \(R_u(P_\fC)\). It is therefore possible to choose some bounded $\wh\Omega\subset P$ which does not depend on the choice of $\psi'\in E^+_\fS$, such that 
\begin{equation}\label{E:Boundfirstfactor}
    \Omega_P(\varphi',\lambda)\cdot(\lambda^{-1}u_\psi\lambda)\subset \bigcup_{\omega\in W^\dagger}\Omega_P(\varphi',\omega)\cdot (\omega^{-1}u_{\psi}\omega)\subset \wh\Omega.
\end{equation}
 Combining Equations \eqref{E:OrrElement} and \eqref{E:Boundfirstfactor} with Equation \eqref{E:factorization} proves the lemma.
\end{proof}

As a consequence of Lemma \ref{L:ChoosingSiegel2}, Lemma \ref{L:BKTplusLemma} immediately follows as we may choose $C=W^\dagger\subset K_{\varphi'}\cap N_{\tSL(V_\bR)}(A_{\varphi'})$. Theorem \ref{T:BKTplus2} and hence Theorem \ref{T:BKTplus} follow from Lemma \ref{L:BKTplusLemma} immediately by multiplying both sides of Equation \eqref{E:Siegelcontainment} on $\psi'$ with the fact $K_\s\leq K_{\psi'}$ for any $\psi'\in E^+_\fS$ in mind. The proofs of Theorem \ref{T:BKTplus2} and hence Theorem \ref{T:BKTplus} are now concluded. \hfill \qed

%-------------------------------------------------------------------------
\section{Appendix: Siegel sets and fundamental sets} \label{S:appendix}
%-------------------------------------------------------------------------

%\colleen{Recommend that you move \S\ref{S:fundsets} into this appendix.}\hd{I think we introduce ``Fundamental sets" and ``Siegel sets" as separated objects, though they are essentially same in our arguments. The appendix focuses on the Siegel sets which is not used in section 1-4, but the concept of fundamental sets is required for stating corollary 2.25 and lemma 2.26 and theorem 2.28.}

\subsection{Siegel sets for reductive groups}

We briefly review the theory of $\bQ$-reductive groups and Siegel sets. We will use notations from \cite{MR3803710}.

Let $\bfG$ be a reductive algebraic group defined over $\bQ$ and $G=\bfG(\bR)$. Let $\bfP\leq \bfG$ be a minimal $\bQ$-parabolic subgroup of $\bfG$ and let $P=\bfP(\bR)$. Let $A\leq P$ be an $\bR$-split torus which is $P$-conjugate to a maximal $\bQ$-split torus in $P$. Let $K\leq G$ be a maximal compact subgroup such that \textit{its associated Cartan involution $\theta_K$ stabilizes $A$}.

Notice that in this case, $A$ descends to the $\bQ$-split center in the Levi quotient $P/R_u(P)$ where $R_u(P)\leq P$  is the unipotent radical, and $L:=P\cap \theta_K(P)$ gives the unique Levi subgroup which is the centralizer of $A$ in $P$. 

Let $\Phi(A,G)$ be the roots of $A$ in $\mathrm{Lie}(G)$ and $\Phi^+(A,G)\subset \Phi(A,G)$ as the subset whose root spaces are contained in $\mathrm{Lie}(P)$. We may choose a set of simple roots $\Delta\subset \Phi^+(A,G)$ such that every root in $\Phi^+(A,G)$ is a linear combination of roots in $\Delta$ with non-negative coefficients.

\begin{definition}
    For a triple $(P, A, K)$ satisfying the conditions above, a Siegel set $\mathfrak{S}\subset G$ with respect to the Siegel triple $(P, A, K)$ is a set of the form:
    \begin{equation}
        \mathfrak{S}:=\Omega A_t K\subset G,
    \end{equation}
    where $\Omega\subset P$ is a compact subset, and for the given $t>0$, 
    \begin{equation}\label{E:PosCone}
        A_t:=\{a\in A^+ \ | \ \chi(a)\geq t, \ \forall \chi\in \Delta\}
    \end{equation}
    where $A^+\subset A$ is the connected component containing the identity.
\end{definition}

Suppose $D$ is a homogeneous variety which admitting a transitive $G$-action, and for any $F\in D$, the stabilizer $\mathrm{Stab}_F\leq G$ is compact. In particular, this is the case for the period domain $D$ parametrizing $\bZ$-PHS of a given type. 

\begin{definition}
    Suppose $\mathfrak{S}\subset G$ is a Siegel set with Siegel triple $(P,A,K)$. For a point $F\in D\cong G/\mathrm{Stab}_F$ such that $\mathrm{Stab}_F\leq K$, we call the set $\mathfrak{S}\cdot F\subset D$ a Siegel set of $D$.  
\end{definition}

The following Lemmas are classical results in reduction theory. See for example \cite{MR0244260} or \cite{MR2189882}.

\begin{lemma}\label{L:SiegelProperties}
    Let $\mathfrak{S}, \mathfrak{T}$ be Siegel sets in $G$, and $\Gamma$ be an arithmetic subgroup of $\mathrm{Aut}(D)$.
    \begin{enumerate}
        \item The set $\{\gamma\in \Gamma \ | \ \gamma\mathfrak{S}\cap \mathfrak{T}\neq \emptyset \}$ is finite.
        \item There exists a finite set $C\subset G$ such that 
        \[
            G = \Gamma \cdot C\cdot \mathfrak{S}. 
        \]
      \end{enumerate}
\end{lemma}
When passing from $G$ to the $G$-homogeneous variety $D$, we also have:
\begin{lemma}\label{L:SiegelProperties2}
    Let $\mathfrak{S}\cdot F_1, \mathfrak{T}\cdot F_2$ be Siegel sets in $D$, and $\Gamma$ be an arithmetic subgroup of $G$. 
    \begin{enumerate}
        \item The set $\{\gamma\in \Gamma \ | \ \gamma\mathfrak{S}\cdot F_1\cap \mathfrak{T}\cdot F_2\neq \emptyset \}$ is finite.
        \item There exists a finite set $C\subset G$ such that 
        \[
            D = \Gamma \cdot C\cdot \mathfrak{S}\cdot F_1. 
        \]
      \end{enumerate}
\end{lemma}

\begin{remark}
    Lemma \ref{L:SiegelProperties} and \ref{L:SiegelProperties2} implies Siegel sets are fundamental sets up to finite translations. 
\end{remark}

%---------------------------------
\subsection{Levi decomposition and generalized Siegel sets}
%---------------------------------

We generalize the notion of Siegel sets for certain non-reductive algebraic groups. Let $\bfQ$ be a $\bQ$-algebraic group admitting a Levi-type extension:
\[
1\rightarrow \bfU\rightarrow \bfQ\rightarrow \bfL\rightarrow 1   
\]
where $\bfU \trianglelefteq\bfQ$ is the unipotent radical of $\bfQ$ which is a unipotent $\bQ$-algebraic group, and $\bfL\simeq \bfQ/\bfU$ is a $\bQ$-reductive group. A typical example is to look at a $\bQ$-parabolic subgroup of $\bfG$ and its Levi decomposition.

Suppose $\bfL_0\leq \bfQ$ is a lift of $\bfL$ defined over $\bR$, such that there is a Levi-type decomposition of $\bR$-algebraic groups
\[
U\rtimes L_0\xrightarrow{\simeq} Q.
\]
Let $\fS_L\subset L$ be a standard Siegel set in $L$ and $\fS_{L,0}\subset L_0$ be its lift in $L_0$. Let $\Omega_U\subset U$ be an open bounded subset. We call the set
\[
\fS=\Omega_U\cdot \fS_{L,0}\subset Q
\]
a generalized Siegel set of $Q$. Let $\Gamma_Q\leq \bfQ(\bQ)$ be a subgroup commensurable with $\bfQ(\bZ)$. In this case we call $\Gamma_Q\leq \bfQ(\bQ)$ is an arithmetic subgroup. We have the following Siegel properties for generalized Siegel sets:

\begin{lemma}\label{L:GeneralizedsiegelpropertiesQ}
    Let $\mathfrak{S}, \mathfrak{T}$ be generalized Siegel sets in $Q$ corresponds to the same Levi decomposition $U\rtimes L_0\xrightarrow{\simeq} Q$, and $\Gamma_Q\leq \bfQ(\bQ)$ be an arithmetic subgroup.
    \begin{enumerate}
        \item The set $\{\gamma\in \Gamma_Q \ | \ \gamma\mathfrak{S}\cap \mathfrak{T}\neq \emptyset \}$ is finite.
        \item There exists a finite set $C\subset Q$ such that 
        \begin{equation*}
            Q = \Gamma_Q \cdot C\cdot \fS.
        \end{equation*}
    \end{enumerate}
\end{lemma}

 Let $D$ be a $Q$-homogeneous variety such that $D\simeq Q/M$ for some compact subgroup $M$. A typical example is to consider $D$ as the mixed period domain of graded polarizable $\bZ$-mixed Hodge structures. 

\begin{definition}
    Suppose $\fS=\Omega_U\cdot \fS_{L,0}\subset Q$ is a generalized Siegel set in $Q$ where $\fS_L$ is a Siegel set in $L$ corresponding to Siegel triple $(P_L, A_L, K_L)$. If $K_{L,0}:=K\cap Q$ is a lift of $K_L$ and $M\leq K_{L,0}$, we call the subset
    \[
    \fS\cdot [M]=  \Omega_U\cdot \fS_{L,0}\cdot [M]\subset D  
    \]
    as a generalized Siegel set in $D$.
\end{definition}

\begin{lemma}\label{L:GeneralizedsiegelpropertiesD}
    Let $\mathfrak{S}\cdot F_1$ and $\mathfrak{T}\cdot F_2$ be generalized Siegel sets in $D$ corresponding to the same Levi decomposition $U\rtimes L_0\xrightarrow{\simeq} Q$, and $\Gamma_Q\leq \bfQ(\bQ)$ be an arithmetic subgroup.
    \begin{enumerate}
        \item The set $\{\gamma\in \Gamma_Q \ | \ \gamma\mathfrak{S}\cdot F_1\cap \mathfrak{T}\cdot F_2\neq \emptyset \}$ is finite.
        \item There exist finite subsets $C\subset Q$ such that 
        \begin{equation*}
            D = \Gamma_Q \cdot C\cdot \fS\cdot F_1.
        \end{equation*}
    \end{enumerate}
\end{lemma}

\begin{proof}[Proof of Lemma \ref{L:GeneralizedsiegelpropertiesQ} and \ref{L:GeneralizedsiegelpropertiesD}]
If $\bfL_0$ is defined over $\bQ$, the lemmas are exactly Lemma \ref{L:GFundSet}. Now fix a $\bQ$-Levi subgroup $\bfL_\bQ\leq \bfQ$, then there exists some $u_0\in U$ such that $L_0=u_0L_\bQ u_0^{-1}$.

Let $\fS_{L,\bQ}\subset L_\bQ$ is the lift of $\fS_0$ in $L_\bQ$, then we may write
\[
\fS=\Omega_1\cdot \fS_{L,0}=\Omega_1\cdot u_0\fS_{L,\bQ}u_0^{-1}, \ 
\fT=\Omega_2\cdot \fT_{L,0}=\Omega_2\cdot u_0\fT_{L,\bQ}u_0^{-1}.
\]
According to Lemma \ref{L:GFundSet}, after passing to a finite-index subgroup, we may assume $\Gamma_Q=\Gamma_U\rtimes\Gamma_L$ with $\Gamma_U\leq \bfU(\bQ)$ is discrete and cocompact, and $\Gamma_L\leq \bfL(\bQ)$ is arithmetic. For $\gamma=\gamma_u\gamma_l$ with $\gamma_u\in \Gamma_U$ and $\gamma_l\in \Gamma_L$ we have
\begin{eqnarray*}\label{E:GeneSiegelIntersect}
    \gamma\fS \cap \fT &=&\gamma_u\gamma_l\cdot\Omega_1\cdot \fS_{L,0} \cap \Omega_2\cdot \fT_{L,0}\\
    &=& (\gamma_u\gamma_l\Omega_1u_0\gamma_l^{-1})\cdot(\gamma_l\fS_{L,\bQ})\cdot u_0^{-1}\cap (\Omega_2u_0)\cdot \fT_{L,\bQ}\cdot u_0^{-1}.\\
    &=&\{(\gamma_u\gamma_l\Omega_1u_0\gamma_l^{-1}\cdot\gamma_l\fS_{L,\bQ})\cap (\Omega_2u_0\cdot \fT_{L,\bQ})\}\cdot u_0^{-1}.
\end{eqnarray*}
By comparing each components in the Levi decomposition, we conclude that there are only finitely many $\gamma_l\in \Gamma_L$ such that $\gamma_l\fS_{L,\bQ}\cap \fT_{L,\bQ}\neq\emptyset$, and for any fixed such $\gamma_l\in \Gamma_L$, there are only many $\gamma_u\in \Gamma_U$ such that $\gamma_u\gamma_l\Omega_1u_0\gamma_l^{-1}\cap \Omega_2u_0\neq\emptyset$. This proves (1) of Lemma \ref{L:GeneralizedsiegelpropertiesQ}.

Fix an open bounded $\Omega_0\subset U$. By Lemma \ref{L:GFundSet} there exists $C\subset Q$ with 
\[
Q=\Gamma_Q\cdot C\cdot \Omega_0\cdot\fS_{L,\bQ}
\]
which is equivalent to
\[
Q=\Gamma_Q\cdot C\cdot \Omega_0\cdot\fS_{L,\bQ}\cdot u_0^{-1}=\Gamma_Q\cdot C\cdot (\Omega_0u_0^{-1})\cdot \fS_{L,0} 
\]
Therefore (2) of Lemma \ref{L:GeneralizedsiegelpropertiesQ} follows as we may replace $C$ by $C\cdot C'$ with $C'\cdot\Omega_1\supset\Omega_0u_0^{-1}$ if necessary. This completes the proof of Lemma \ref{L:GeneralizedsiegelpropertiesQ}.

The progression from Lemma \ref{L:GeneralizedsiegelpropertiesQ} to Lemma \ref{L:GeneralizedsiegelpropertiesD} is no different than the one from Lemma \ref{L:SiegelProperties} to Lemma \ref{L:SiegelProperties2} which is well-known in the classical reduction theory.
\end{proof}

\subsection{Generalized Siegel sets for pairs}

For the sake of completeness, in this section we provide an alternative version of the main theorem of \cite{MR3803710} and \cite{MR4593766} for generalized Siegel sets (Theorem \ref{T:OrrGeneralized}). This theorem is not used in the rest of the paper.

Assume $\bfQ_1\leq \bfQ_2$ is a pair of $\bQ$-algebraic groups admitting compatible Levi-type extension sequence:
\begin{equation}\label{E:cptbleLeviExt}
\begin{tikzcd}
1 \arrow[r] \arrow[d, equal] & \bfU_1 \arrow[r] \arrow[d, hookrightarrow] & \bfQ_1 \arrow[r] \arrow[d, hookrightarrow] & \bfL_1 \arrow[r] \arrow[d, hookrightarrow] & 1 \arrow[d, equal] \\1 \arrow[r] & \bfU_2 \arrow[r] & \bfQ_2 \arrow[r] & \bfL_2 \arrow[r] & 1
\end{tikzcd}
\end{equation}
Suppose $(P_i, A_i, K_i)_{i=1,2}$ are Siegel triples of $\bfL_i$, and $L_{2,0}\leq Q_2$ is a lift of $L_2$ such that $L_{1,0}:=L_{2,0}\cap Q_1$ is a lift of $L_1$. 
\begin{theorem}\label{T:OrrGeneralized}
    Suppose $(P_i, A_i, K_i)_{i=1,2}$ satisfies the following relations:
    \begin{enumerate}
        \item $K_2\cap L_1=K_1$,
        \item The Cartan involution $\theta_{2}$ of $K_2$ on $L_2$ stabilizes $A_1$.
    \end{enumerate}
    Then there exist Siegel sets $\fS_{L,i}$ of $L_i$ with respect to the Siegel triple $(P_i, A_i, K_i)$, their lifts $\fS_{L,i,0}$ in $L_{i,0}$, bounded subsets $\Omega_i\subset U_i$ and a finite set $C\subset L_{2,0}$ such that the sets
    \[
    \fS_i:= \Omega_i\cdot \fS_{L,i,0} 
    \]
    are generalized Siegel sets in $Q_i$ and satisfy
    \[
    \fS_1\subset C\cdot \fS_2.    
    \]
\end{theorem}

\begin{proof}
    We may apply Theorem \ref{T:OrrSch} to $L_{1,0}\leq L_{2,0}$ and the corresponding Siegel triples, then the lemma follows directly from the Definition of generalized Siegel sets and Lemma \ref{L:GeneralizedsiegelpropertiesQ}.
\end{proof}

% -----------------------------

\def\cprime{$'$} \def\Dbar{\leavevmode\lower.6ex\hbox to 0pt{\hskip-.23ex
  \accent"16\hss}D}

% -----------------------------
\end{document}